\documentclass[12pt]{amsart}
\usepackage{amssymb}
\usepackage{amsmath}
\usepackage{amsthm}
\usepackage{color}
\usepackage{hyperref,url}
\usepackage{ mathrsfs }
\usepackage{graphicx}
\usepackage{slashed}
\usepackage{enumitem}
\usepackage{tikz}
\usetikzlibrary{patterns,arrows.meta}
\usepackage[normalem]{ulem}

\hypersetup{
	colorlinks=true,
	linkcolor=blue,
	filecolor=blue,
	urlcolor=black,
	citecolor=blue,
}

\definecolor{plum}{rgb}{.5,0,1}

\theoremstyle{plain}
\newtheorem{theorem}{Theorem}
\newtheorem{lemma}{Lemma}
\newtheorem{proposition}{Proposition}

\newtheorem{remark}{Remark}

\newtheorem{definition}{Definition}

\newcommand{\R}{\mathbb{R}}
\newcommand{\C}{\mathbb{C}}

\newcommand{\D}{\mathbb{D}}
\newcommand{\B}{\mathbb{B}}
\renewcommand{\P}{\mathbb{P}}

\newcommand{\T}{\mathbb{T}}
\newcommand{\e}{\epsilon}

\newcommand{\supp}{\mathrm{supp}}

\newcommand{\bth}{\boldsymbol{\theta}}
\newcommand{\bT}{\mathbf{T}}

\newcommand{\cN}{\mathcal{N}}

\newcommand{\I}{\mathcal{I}}

\newcommand{\mc}{\mathcal}

\def\XXint#1#2#3{{\setbox0=\hbox{$#1{#2#3}{\int}$ }
		\vcenter{\hbox{$#2#3$ }}\kern-.6\wd0}}

\newcommand{\stkout}[1]{\ifmmode\text{\sout{\ensuremath{#1}}}\else\sout{#1}\fi}

\usepackage{bbm}
\usepackage{scalerel,stackengine}
\stackMath
\newcommand\widecheck[1]{%
	\savestack{\tmpbox}{\stretchto{%
			\scaleto{%
				\scalerel*[\widthof{\ensuremath{#1}}]{\kern-.6pt\bigwedge\kern-.6pt}%
				{\rule[-\textheight/2]{1ex}{\textheight}}
			}{\textheight}%
		}{0.5ex}}%
	\stackon[1pt]{#1}{\scalebox{-1}{\tmpbox}}%
}
\newcommand{\Dec}{\textup{Dec}}
\newcommand{\BilDec}{\textup{BilDec}}
\begin{document}
	
	\title{Decoupling for AD-regular sets on the parabola}

	\author{Ciprian Demeter} \address{ Ciprian Demeter\\  Department of Mathematics\\ Indiana University Bloomington, USA} \email{demeterc@iu.edu}
	
	\author{Yuqiu Fu} \address{Yuqiu Fu}\email{yuqiufu25@gmail.com}
	
	\thanks{The first author is  partially supported by the NSF grant DMS-2349828}
	
	\begin{abstract}
		We improve the decoupling exponent for functions with spectrum inside AD-regular collections of arcs on the parabola. We achieve this by incorporating recent Szemer\'{e}di--Trotter-type estimates into the bootstrapping argument from \cite{BD}. As an application, our results complement, and in some cases improve earlier results \cite{chang2022decoupling} for arithmetic Cantor sets.  
	\end{abstract}

	\maketitle

	\section{Introduction}
	Let us start with a definition.
	\begin{definition}
		Let $\alpha\in(0,1]$.
		
		For $R\ge 1$, let $\I_R$ be a collection of pairwise disjoint intervals of length $R^{-1}$ inside $[-1,1]$, and let $C_{AD}\ge 1$. We call $\I_R$ an $(\alpha,R,C_{AD})$-AD regular collection if for each interval $J$ of length $2/R\le |J|\le 4$ centered at some $x\in\cup_{I\in\I_R}I$ we have
		\begin{equation}
			\label{kdjchpvuryue8urtg9t08h=670980}
			C_{AD}^{-1}(|J|R)^{\alpha}\le \#\{I\in\I_R:\;I\subset J\}\le C_{AD}(|J|R)^{\alpha}.
		\end{equation} 
	\end{definition}
	When $\I_R$ is the whole partition of $[-1,1]$, we may take $\alpha=1$ and $C_{AD}\sim 1$. Examples for $\alpha<1$ are included in Section \ref{sec:2}. Note that
	$$\#(\I_R)\sim_{C_{AD}}R^{\alpha}.$$

	Consider the truncated parabola
	$$\P=\{(x,x^2):\;x\in[-1,1]\}.$$
	We write $\cN_{\delta}$ for its $\delta$-neighborhood. For each collection $\I$ of intervals, we consider the corresponding subset of this neighborhood 
	$$\cN_{\delta}(\I)=\{(x,x^2+t):\;x\in\cup_{I\in\I}I,\;|t|\le \delta\}.$$
	We  denote by $\Theta(\I_{\delta^{-1/2}})$ the collection of rectangle-like, essentially flat regions $\theta$ with dimensions $\delta^{1/2}\times \delta$, that cover $\cN_{\delta}(\I_{\delta^{-1/2}})$. We use notation
	$$\widehat{F_\theta}=\widehat{F}1_\theta.$$
	\begin{theorem}
		\label{mmmain1}
		Assume $\alpha\in(0,1)$. For each $6<p<\infty$ there is $c_{p,\alpha}>0$ such that for each  $(\alpha,R^{1/2},C_{AD})$-AD regular collection $\I=\I_{R^{1/2}}$ and each  $F:\R^2\to\C$ with spectrum inside $\cN_{1/R}(\I)$ we have 
		\begin{equation}
			\label{ ruehfr0eufui rt9go5tho0-uoj=-}
			\|F\|_{L^p(\R^2)}\lesssim_{\epsilon,C_{AD},p,\alpha} R^{\epsilon} (\#\Theta(\I))^{\frac12-\frac3p-c_{p,\alpha}}(\sum_{\theta\in \Theta(\I)}\|F_\theta\|^2_{L^p(\R^2)})^{1/2}.
		\end{equation}
	\end{theorem}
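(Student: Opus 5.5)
We follow the bootstrapping (induction on scales) scheme of Bourgain--Demeter \cite{BD}, replacing the bilinear Kakeya/$L^2$ input at the level where the $\ell^2$ exponent is determined by an incidence estimate of Szemer\'edi--Trotter type adapted to AD-regular sets. Let $N=\#\Theta(\I)\sim R^{\alpha/2}$ and let $\Dec_p(R)$ be the best constant $D$ in $\|F\|_p\le D(\sum_\theta\|F_\theta\|_p^2)^{1/2}$, normalized as $\Dec_p(R)=N^{\frac12-\frac3p}K_p(R)$. The target is $K_p(R)\lesssim_\epsilon R^\epsilon N^{-c_{p,\alpha}}$.

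\textbf{Step 1: bilinear reduction.} Following \cite{BD}, we pass to a bilinear decoupling constant $\BilDec_p(R)$ for $F_1,F_2$ supported on transverse pieces $\I^1,\I^2$ (separated by $\gtrsim K^{-1}$). This uses the Bourgain--Guth dichotomy: at each point either one $K^{-1}$-cap dominates, giving $\Dec_p(R)\lesssim \Dec_p(R/K^2)\cdot\Dec(\text{parabolic rescaling of }\I\cap I_K)$. Here the AD-regularity is essential: an $\I$ restricted to an interval of length $K^{-1}$ and rescaled is again $(\alpha,R^{1/2}/K,O(C_{AD}))$-AD regular, with $N/K^\alpha$ caps. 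Thus the induction closes with a loss $C K^{\epsilon}$, and it suffices to prove the bilinear bound with the improved exponent.

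\textbf{Step 2: bilinear iteration.} At intermediate scale $R^{1/2}\ge r\ge 1$, we write $|F_1F_2|^{1/2}$ on balls of radius $r$, and use the standard $L^2$ decoupling (orthogonality) and Hölder in the form of \cite{BD}: from scale $\delta^{1/2}$ caps to $\delta$ caps one passes through $L^2$ with the bilinear Kakeya estimate controlling $\| (\sum|F_{1,\tau}|^2)^{1/2}(\sum|F_{2,\tau'}|^2)^{1/2}\|$. The new input replaces bilinear Kakeya (which is sharp for the full parabola and gives $p=6$) by an incidence bound: the planks $T$ dual to caps $\tau$ centered at AD-regular directions; a Szemer\'edi--Trotter-type estimate for $r$-rich points among $\delta$-tubes with AD-regular direction set (e.g.\ Fu--Ren type Furstenberg/incidence bounds) gives a gain of $N^{-c}$ over the trivial count whenever $\alpha<1$. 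Concretely I would prove $\|\prod_{j=1,2}(\sum_{\tau\in\I^j}|F_{j,\tau}|^2)^{1/4}\|_{L^p}\lesssim r^{-\beta}\cdots$ with $\beta=\beta(\alpha,p)>0$ beyond what the $\alpha=1$ case gives, via splitting into level sets of multiplicity and applying the incidence bound to rich points.

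\textbf{Step 3: closing the bootstrap.} Iterating the two-step estimate $m$ times, as in \cite{BD}, yields $\BilDec_p(R)\lesssim R^{\epsilon}N^{\frac12-\frac3p}\,N^{-c}\,\prod_j K_p(R^{1-2^{-j}})^{\kappa_j}$, with weights $\kappa_j$ summing to less than one; combined with Step 1 this forces $K_p(R)\lesssim R^\epsilon N^{-c_{p,\alpha}}$. For $p>6$ the trivial $\ell^2$ exponent $\frac12-\frac3p$ is obtained this way for all $\alpha$, and the incidence gain is what supplies the extra $c_{p,\alpha}$. The main obstacle is Step 2: obtaining a quantitative incidence/Kakeya gain for planks whose directions form an AD-regular set \emph{uniformly at all intermediate scales} and in a form compatible with the wave-packet weights (not just counting); I would first attempt to reduce to an unweighted rich-tube estimate by dyadic pigeonholing of wave-packet amplitudes, losing only $R^\epsilon$.
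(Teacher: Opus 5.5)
\textbf{The gap is in your Step 2, and it cannot be repaired as stated.} You propose to improve the bilinear square function (bilinear Kakeya) estimate
\[
\Big\|\Big[\prod_{i=1}^2\Big(\sum_{\theta}|F_{i,\theta}|^2\Big)^{1/2}\Big]^{1/2}\Big\|_{L^p(B_R)}\lessapprox R^{-\frac2p}\Big(\prod_{i=1}^2\|F_i\|_{\frac p2,R,B_R}\Big)^{1/2}
\]
using AD-regularity of the directions. This estimate is sharp for every direction set, AD-regular or not. To see this, take one wave packet $F_{i,\theta}\approx 1_{T_\theta}$ per direction, with all the $R^{1/2}\times R$ tubes passing through the origin, and write $N_i=\#\Theta(\I_i)$. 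By transversality the product lives essentially on $B(0,R^{1/2})$, where it equals $(N_1N_2)^{1/2}$. So the left side is $\approx (N_1N_2)^{1/4}R^{1/p}$, and the right side is $R^{-2/p}(N_1N_2)^{1/4}R^{3/p}$, which is the same. In incidence language, a bush is a legitimate family of tubes with AD-regular directions. Hence no rich-point bound that only sees the direction set can beat bilinear Kakeya. Szemer\'edi--Trotter-type bounds such as Demeter--Wang need non-concentration of the tubes themselves (in the paper: the parallel tubes in each direction form a $(\delta,1-\alpha,K_2)$-Katz--Tao set). A phase-blind square function cannot supply that. The real mechanism concerns interference in $F_1F_2$: AD-regularity forbids long runs of consecutive directions, and this limits constructive interference.

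\textbf{How the paper places the gain.} It keeps the Bourgain--Demeter two-scale recursion (Proposition \ref{propinv5}(b)) verbatim, including $L^2$ orthogonality and the unimproved \eqref{inv6}. The gain enters only at the endpoint $\beta=1$, through an improved bilinear $(p,\frac p2)$ decoupling with $\Gamma_{p,\alpha}<\alpha(\frac14-\frac1p)-\frac2p$ (Theorem \ref{mmmain2}). The recursion converts this into $\gamma_{p,\alpha}\le\frac{p-6}{p-4}(\frac2p+\Gamma_{p,\alpha})<\frac\alpha2(\frac12-\frac3p)$.

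Theorem \ref{mmmain2} follows by interpolation from the bilinear $(8,4)$ bound, Theorem \ref{mmmain3}. That bound is proved by pigeonholing wave packets at all scales $R_j=R^{2^{-j}}$, tracking amplitudes $H_{i,j}$, tubes per direction $M_{i,j}$, and richness $N_{i,j}$. The non-concentration that Szemer\'edi--Trotter needs is Proposition \ref{Fuesst}, valid only for $j\ge1$. There, two $L^2$ computations on an $sR_j^{1/2}\times R_j$ rectangle are compared, the second using AD-regularity of the finer caps inside $\theta_j$. This bounds the number of parallel tubes it contains by $s^{1-\alpha}(H_{i,j-1}/H_{i,j})^2R_j^{\alpha}$. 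At the top scale $j=0$ nothing like this is available, and only bilinear Kakeya is used. The resulting amplitude factors are then absorbed by a weighted average of the inequalities from the different scales.

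Your Steps 1 and 3 are broadly in line with this. But the gain is placed in an estimate that cannot improve, and the multiscale source of tube non-concentration is missing, so the proposal does not yet contain a proof.
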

	Implicit constants  will typically depend on $p$ and $\alpha$, but we will rarely record that. For $R$-dependent quantities $A_R,B_R$ we will use $A_R\lessapprox B_R$ to denote the fact that the inequality 
	$A_R\lesssim_\epsilon R^\epsilon B_R$ holds true for arbitrary $\epsilon>0$. For example,
	$$(O(\log R))^{O((\log\log R)^{O(1)})}\lessapprox 1.$$
	The dependence on $C_{AD}$ in \eqref{ ruehfr0eufui rt9go5tho0-uoj=-} may be tracked to be of the form $(C_{AD})^{O(\log\log R)^{O(1)}}$. This is indeed $\lessapprox_{C_{AD}}1.$   
	
	The exponent $c_{p,\alpha}$ we get is easily computable, though it looks less appealing. 
	There is an interesting question about what the best possible value of $c_{p,\alpha}$ should be. Taking $F=F_\theta$ shows that 
	$$c_{p,\alpha}\le \frac12-\frac3p.$$
	Constructive interference for $F$ equal to the exponential sum with one frequency for each $I$  shows that
	$$
	c_{p,\alpha}\le \frac{3(1-\alpha)}{p\alpha}.
	$$
	This upper bound is applicable to arbitrary collections $\I$, it does not take advantage of regularity. In Section \ref{sec:2} we show that within the AD-regular framework, the following stronger constraint arises from energy considerations
	\begin{equation}
		\label{cehf 7ut9ugh9ythytih96i0j=}
		c_{p,\alpha}\le \frac{2(1-\alpha)}{p\alpha}.
	\end{equation}
	It seems tempting to conjecture that the optimal $c_{p,\alpha}$ is 
	\begin{equation}
		\label{dbvh bdfhvkfkhsgh jw;gh}
		c_{p,\alpha}=\min(\frac12-\frac3p,\frac{2(1-\alpha)}{p\alpha}).
	\end{equation}
	This is equivalent to upgrading \eqref{ ruehfr0eufui rt9go5tho0-uoj=-} to square root cancellation 
	\begin{equation}
		\label{mfjviu9uirt09ihi0-9j7=}
		\|F\|_{L^p(\R^2)}\lesssim_{\epsilon,C_{AD}} R^{\epsilon} (\sum_{\theta\in \Theta(\I)}\|F_\theta\|^2_{L^p(\R^2)})^{1/2}
	\end{equation}
	for $$p=2+\frac4\alpha.$$

	This is only known to be true when $\alpha=1$.  The inequality 
	\begin{equation}
		\label{cdh urhreoifyierfyoiy}
		\|F\|_{L^p(\R^2)}\lessapprox (\#\Theta(\I))^{\frac12-\frac3p}(\sum_{\theta\in \Theta(\I)}\|F_\theta\|^2_{L^p(\R^2)})^{1/2}
	\end{equation}
	was proved in \cite{BD} for $p\ge 6$, for arbitrary collections $\I$ of pairwise disjoint $R^{-1/2}$-intervals. In the absence of any regularity assumption, and apart from $R^\epsilon$-losses, the exponent $\frac12-\frac3p$ is sharp for certain collections of (any number of) intervals. For example, this is the case whenever the centers of the intervals form an arithmetic progression, as can be seen from constructive interference, by testing \eqref{cdh urhreoifyierfyoiy} with exponential sums. Testing the same example in Theorem \ref{mmmain1} shows that at most $${M}\lessapprox_{C_{AD}} (\frac1d)^{\frac{3}{3+pc_{p,\alpha}}}$$ many of the intervals $I\in\I$ can lie inside an arithmetic progression with step $d$, for each $(\alpha,R^{1/2},C_{AD})$-regular collection $\I$. However, it is very easy to see the bound that follows from  \eqref{kdjchpvuryue8urtg9t08h=670980}, namely
	$$M\lesssim C_{AD}^2(\frac1d)^{\alpha}.$$
	Due to \eqref {cehf 7ut9ugh9ythytih96i0j=} this latter bound is better than the previous one, since $\frac{3}{3+pc_{p,\alpha}}> \alpha$. Thus, not surprisingly, even a  decoupling with a sharp $c_{p,\alpha}$ would not have interesting consequences for the additive structure of AD-regular sets on $\R$. Instead, our decoupling implies certain lack of additive structure of AD-regular sets on the parabola. See Remark \ref{adden}. There are a growing number of papers addressing this phenomenon. We refer the reader to Question 2.13 in \cite{BD}, as well as \cite{O1}, \cite{DD}, \cite{O2}, \cite{DemWang}, \cite{Orpflat}, \cite{doprod}  and the references therein.

	To prove Theorem \ref{mmmain1} we use the original bootstrapping argument in \cite{BD}, that combines $L^2$ orthogonality with bilinear $(p,\frac{p}2)$ decoupling. The latter is a reformulation of the bilinear restriction inequality. The reason for using  the half exponent $p/2$ comes from the role of the $(4,2)$ decoupling in the bilinear setting, and interpolation with the other endpoint $(\infty,\infty)$.

	Our  new critical input here is an improved bilinear $(p,\frac{p}2)$ decoupling. 
	
	Let $J_1,J_2\subset [-1,1]$ be intervals separated by $\sim 1$. For $i\in\{1,2\}$, let $\I_i$ be $(\alpha,R^{1/2},C_{AD})$-AD regular collections of intervals inside $J_i$.
	\begin{theorem}[Improved bilinear $(p,\frac{p}2)$ decoupling]
		\label{mmmain2}
		Let $\alpha\in(0,1)$ and let $4<p<\infty$.
		Assume  $F_i:\R^2\to\C$ has spectrum inside $\cN_{1/R}(\I_i)$. There is $$\Gamma_{p,\alpha}<\alpha(\frac14-\frac1p)-\frac2p$$
		such that
		\begin{equation}
			\label{ ruehfr0eufui rt9go5tho0-uoj=-fghytjh}
			\|(F_1F_2)^{1/2}\|_{L^p(\R^2)}\lesssim_{\epsilon,C_{AD}} R^{\Gamma_{p,\alpha}+\epsilon} \prod_{i=1}^2(\sum_{\theta\in \Theta(\I_i)}\|F_{i,\theta}\|^2_{L^{p/2}(\R^2)})^{1/4}.
		\end{equation}
	\end{theorem}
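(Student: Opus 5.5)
We follow the standard route: wave packet decomposition, reduction to incidences between bilinear tubes and high-multiplicity squares, then a Szemerédi--Trotter-type bound that exploits AD-regularity of the tube directions.

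\emph{Normalisation and wave packets.} By pigeonholing we may assume each $F_{i,\theta}$ is a sum of wave packets on $R^{1/2}\times R$ tubes $T$ with comparable amplitudes, all normalised to size $\sim 1$. We may also assume each $\theta$ carries a comparable number $N_i$ of tubes. Since $p/2>2$, interpolating between the $L^2$ and $L^\infty$ normalisations costs only $R^\epsilon$. The right-hand side of \eqref{ ruehfr0eufui rt9go5tho0-uoj=-fghytjh} is then about
$$\prod_i\big(\#\Theta(\I_i)\,(N_iR^{3/2})^{4/p}\big)^{1/4}, \qquad \#\Theta(\I_i)\sim R^{\alpha/2}.$$

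\emph{Localisation to high-multiplicity squares.} Cover $\R^2$ by $R^{1/2}$-squares $Q$. On each $Q$, transversality gives bilinear $L^4$ orthogonality: $\|(F_1F_2)^{1/2}\|_{L^4(Q)}^4$ is controlled by $\prod_i \sum_{T\ni Q}\|\cdot\|$. Combined with the trivial $L^\infty$ bound, this controls $\|(F_1F_2)^{1/2}\|_{L^p(Q)}^p$ in terms of the multiplicities $r_1(Q), r_2(Q)$, where $r_i(Q)$ is the number of tubes from $\I_i$ passing through $Q$. After dyadic pigeonholing of $r_1, r_2$, we obtain
$$\|(F_1F_2)^{1/2}\|_p^p\lessapprox \#\mathcal{Q}_{r_1,r_2}\; R\,(r_1r_2)^{p/4}.$$
The problem therefore reduces to bounding the number of $R^{1/2}$-squares that are $r_1$-rich for tubes of family $1$ and $r_2$-rich for tubes of family $2$.

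\emph{Incidence bound.} The trivial counting $\#\mathcal{Q}\, r_i\lesssim R^{1/2}\cdot \#\{T_i\}$, together with $r_i\le \#\Theta(\I_i)$, recovers exactly the exponent $\alpha(\frac14-\frac1p)-\frac2p$, which is the classical bilinear bound. The gain must come from a Szemerédi--Trotter-type estimate for tubes whose directions form an AD-regular set. For this I would invoke the recent estimates of Fu--Ren type (or Orponen-type incidence bounds for Frostman/regular tube families). The input is that in every $\rho$-ball of directions there are at most $C(\rho R^{1/2})^{\alpha}$ tube directions, and within each direction the tubes are essentially $R^{1/2}$-separated. Such bounds give
$$\#\{r\text{-rich squares}\}\lessapprox \frac{|\mathbb{T}|^{2}}{r^{3}}\cdot(\text{factor}) + \frac{|\mathbb{T}|}{r}.$$
This improves the trivial bound precisely in the intermediate range of $r$ where the trivial estimate is sharp. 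We apply it after rescaling $R^{1/2}$-squares to unit scale. Since the two families are transversal, we use the bilinear version, or apply the bound to the union $\mathbb{T}_1\cup\mathbb{T}_2$ with $r=\min(r_1, r_2)$.

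\emph{Main obstacle.} I expect the hardest step to be showing that the extremal configuration for the trivial bound really falls into the range where the AD-regular incidence estimate gives a power gain. The extremal configuration is $r_i\sim\#\Theta(\I_i)$ with square count $\sim N_iR^{1/2}$. This is a bush/grain-type configuration, so it must be ruled out using AD-regularity in the upper bound $\#\{I\subset J\}\le C_{AD}(|J|R^{1/2})^\alpha$ on all scales. Concretely, with $\alpha<1$, a configuration in which all $R^{\alpha/2}$ directions pass through many common squares would force a two-ends or Frostman violation.

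I would carry out a case analysis in $(r_1, r_2, N_1, N_2)$:
\begin{itemize}
\item in the high-density regime, use the incidence bound;
\item in the low-density regime, the trivial bound already beats the threshold.
\end{itemize}
Optimising between the two regimes then yields some $\Gamma_{p,\alpha}$ strictly below $\alpha(\frac14-\frac1p)-\frac2p$. The $C_{AD}$-dependence enters only polynomially.
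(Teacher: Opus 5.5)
The gap is in your localisation step, and the reduction it leads to cannot produce a gain at scale $R$.

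\textbf{The local bound is wrong.} On an $R^{1/2}$-square $Q$ with unit-amplitude packets, bilinear $L^4$ orthogonality together with $|F_1F_2|\le r_1r_2$ gives
\[
\int_Q|F_1F_2|^{p/2}\lesssim R\,(r_1r_2)^{p/2-1},
\]
not $R(r_1r_2)^{p/4}$. Your bound is local square-root cancellation, and it is false for $p>4/\alpha$. Take one packet per direction, phased to interfere at the origin. Then $|F_1F_2|\sim R^{\alpha}$ on a unit square, so $\int_Q|F_1F_2|^{p/2}\gtrsim R^{\alpha p/2}\gg R^{1+\alpha p/4}$. This is the example behind the lower bound $\Gamma_{p,\alpha}\ge\frac\alpha4-\frac3p$. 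Note also that if your local bound held, bilinear Kakeya $\#\mathcal{Q}\lesssim|\mathbb{T}_1||\mathbb{T}_2|/(r_1r_2)$ alone would give $\Gamma_{p,\alpha}=-\frac2p$. That is exactly the square-root cancellation the paper leaves open, and no incidence theorem would be needed. With the correct local bound, $R(r_1r_2)^{p/2-1}\cdot|\mathbb{T}_1||\mathbb{T}_2|/(r_1r_2)$ is largest at maximal richness $r_i\sim\#\Theta(\I_i)$. There it returns exactly $\alpha(\frac14-\frac1p)-\frac2p$.

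\textbf{No incidence gain is available in that regime at scale $R$.} AD-regularity constrains only the directions. In every direction, place $N$ adjacent parallel tubes through a common ball of radius $NR^{1/2}$. This gives $\sim N^2$ squares that are fully rich for both families, which is exactly the bilinear Kakeya count. So the ``two-ends or Frostman violation'' you hope to extract does not exist.

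The Szemer\'edi--Trotter-type bounds of \cite{DemWang} and \cite{doprod} need the parallel tubes in each direction to form a $(\delta,1-\alpha,K_2)$-Katz--Tao set, and the bound degrades with $K_2$. Mere $R^{1/2}$-separation gives only $K_2\sim R^{\alpha/2}$, which is worse than bilinear Kakeya. The paper stresses that no meaningful non-concentration exists at the top scale.

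\textbf{The missing idea is multi-scale.} The paper pigeonholes amplitudes $H_{i,j}$ and multiplicities at all radii $R_j=R^{2^{-j}}$. Inside a ball $B_{R_j}$, each $\theta_{j-1}\subset\theta_j$ contributes a single packet of height $H_{i,j-1}$. The $\alpha$-dimensionality of these subdirections then gives $K_2\lessapprox (H_{i,j-1}/H_{i,j})^2R_j^{\alpha}$ (Proposition \ref{Fuesst}). So the incidence bound is used only for $j\ge1$, with bilinear Kakeya at $j=0$. The amplitude-ratio loss is paid for by the $L^2$-orthogonality constraints, through a weighted combination of the estimates over all scales. This is done at $p=8$. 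General $p$ then follows by interpolating with the $L^\infty$ bound for $p>8$, or with bilinear restriction at $p=4$ for $4<p<8$.
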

	See \eqref{jh fh rufhu0 rey98g9} for an explicit  relation between the exponents in Theorems \ref{mmmain1} and \ref{mmmain2}.

	Inequality \eqref{ ruehfr0eufui rt9go5tho0-uoj=-fghytjh} is known to hold with exponent $\alpha(\frac14-\frac1p)-\frac2p$ in place of $\Gamma_{p,\alpha}$ for arbitrary collections $\I_i$ of $M=R^{\alpha/2}$ many intervals, without any regularity assumption. This follows by interpolating $p=4$ with $p=\infty$, see Section \ref{sec:5}. One sharp example comes by considering $M$ overlapping wave packets for each $i$, in consecutive directions. Constructive interference can be enforced on a $\sqrt{R}/M$-square. 
	The better exponent $\Gamma_{p,\alpha}$ in \eqref{ ruehfr0eufui rt9go5tho0-uoj=-fghytjh} is a consequence of AD-regularity, as consecutive directions are now  forbidden. 
	
	In particular, $\Gamma_{4,\alpha}=-\frac12$ for each $\alpha\in(0,1]$. It is an interesting question as to what the sharp exponent $\Gamma_{p,\alpha}$ should be for $p>4$. The most severe lower bound we found is
	\begin{equation}
		\label{;l jreguti g]9ry]0 p5o0h679=}
		\Gamma_{p,\alpha}\ge \begin{cases}-\frac2p,\;p\le \frac4\alpha\\\frac\alpha4-\frac3p,\;p>\frac4\alpha\end{cases}.
	\end{equation}
	We do not exclude the possibility that more intricate constructions could improve this lower bound. \eqref{;l jreguti g]9ry]0 p5o0h679=}  follows from constructive interference as follows. Assume each $F_{i,\theta}$ is a single, non-oscillatory wave packet 
	$$F_{i,\theta}\approx 1_{T_\theta},$$
	with $T_{\theta}$ a $\sqrt{R}\times R$ rectangle centered at the origin. Let $M\lesssim R^{1/2}$.  Due to regularity, we can only have $M^\alpha$ out of $M$ consecutive directions contributing to  each $F_i$. In the saturated case,   $|F_i|\sim M^{\alpha}$ on a $\sqrt{R}/M\times R/{M^2}$ rectangle $R_i$ centered at the origin. Due to transversality, $|R_1\cap R_2|\sim R/M^2$, and thus \eqref{ ruehfr0eufui rt9go5tho0-uoj=-fghytjh} implies that
	$$M^{\alpha}(R/M^2)^{1/p}\lesssim R^{\Gamma_{p,\alpha}}M^{\alpha/2}(R^{3/2})^{2/p}.$$
	To get \eqref{;l jreguti g]9ry]0 p5o0h679=} we choose $M=1$ when $p\le 4/\alpha$ and $M=\sqrt{R}$ when $p>4/\alpha$.
	
	It would be particularly interesting to know whether $\Gamma_{p,\alpha}=-\frac2p$ for some $p>4$. If true, this would entail square root cancellation. Indeed, the square function version of this inequality holds true, see \eqref{inv6}. It would also imply \eqref{mfjviu9uirt09ihi0-9j7=}, as explained in Remark  \ref{ckl jhuicrhii rt[pgi]0-h96}. 
	\begin{remark}
		\label{chyurt8g0-5696 j9- boy0h u-0j670j= 5u0-}
		It is not clear whether the half exponent pair $(p,p/2)$ is as fundamental when $\alpha<1$, as it is in the case $\alpha=1$. Other $(p,q)$ decouplings might be more relevant to consider in order to improve the bootstrapping mechanism when $\alpha<1$. See Remark \ref{weh -fureigrtihyj u=jpi8=[ko []}.
	\end{remark}
	\medskip
	
	Theorem \ref{mmmain2} is a consequence of Theorem \ref{mmmain3} and interpolation, see Section \ref{sec:5}.
	\begin{theorem}[Improved bilinear $(8,4)$ decoupling]
		\label{mmmain3}
		Let $\alpha\in(0,1)$.
		Assume  $F_i:\R^2\to\C$ has spectrum inside $\cN_{1/R}(\I_i)$. There is $$\Gamma_{8,\alpha}<\frac{\alpha}{8}-\frac14$$
		such that
		\begin{equation}
			\label{ ruehfr0eufui rt9go5tho0-uoj=-fghytjhipf urogut-gu}
			\|(F_1F_2)^{1/2}\|_{L^8(\R^2)}\lesssim_{\epsilon,C_{AD}} R^{\Gamma_{8,\alpha}+\epsilon} \prod_{i=1}^2(\sum_{\theta\in \Theta(\I_i)}\|F_{i,\theta}\|^2_{L^{4}(\R^2)})^{1/4}.
		\end{equation}
	\end{theorem}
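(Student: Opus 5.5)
Since $\|(F_1F_2)^{1/2}\|_{L^8}^8=\int|F_1F_2|^4$, the plan is to reduce \eqref{ ruehfr0eufui rt9go5tho0-uoj=-fghytjhipf urogut-gu} to an incidence estimate between wave packets. We first apply the standard reductions. By pigeonholing in dyadic scales (losing only $R^\epsilon$), we may assume that each $F_{i,\theta}$ is a sum of wave packets $w_T$ on $R^{1/2}\times R$ tubes $T$ with comparable amplitudes $|w_T|\sim \lambda_i 1_T$. We may also assume that the number of active tubes per active $\theta$ is comparable, and that the number $N_i$ of active $\theta\in\Theta(\I_i)$ is dyadically fixed. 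We then localize to $R^{1/2}$-squares $Q$. On each $Q$, the bilinear $L^4$ estimate (bilinear restriction, equivalently $(4,2)$ decoupling with $\Gamma_{4,\alpha}=-1/2$) gives
$$\int_Q|F_1F_2|^2\lesssim R^{-1}\prod_i \|\sum_{\theta}|F_{i,\theta}|^2\|_{L^1(w_Q)}^{\,}\cdots,$$
which in the wave packet model becomes $\approx |Q|\prod_i \lambda_i^2 n_i(Q)$. Here $n_i(Q)$ is the number of tubes from family $i$ through $Q$. This is the square-root-cancellation (square function) heuristic. It is transferred to $L^8$ via the local $L^\infty$ bound $|F_i|\lesssim \lambda_i n_i(Q)$, so that $\int_Q|F_1F_2|^4\lessapprox |Q|\prod_i\lambda_i^4 n_i(Q)^{3}$ at worst. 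The trivial bound $n_i(Q)\le N_i\lesssim R^{\alpha/2}$ recovers exactly the exponent $\frac{\alpha}{8}-\frac14$. The gain must therefore come from showing that squares $Q$ where both $n_1(Q)$ and $n_2(Q)$ are close to maximal are rare.

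The key new input is a Szemer\'edi--Trotter type estimate for $r$-rich squares. The families of tubes here have directions in an $(\alpha,R^{1/2},C_{AD})$-AD regular set, and only one tube per direction passes through a given point (up to multiplicity from the pigeonholed translates). We rescale by $R^{-1}$ so that tubes become $\delta\times1$ tubes with $\delta=R^{-1/2}$, whose direction set is a $(\delta,\alpha)$-regular set. We then invoke the recent Fu--Ren / Ren--Wang type incidence bounds for Frostman (Katz--Tao) tube families with $\alpha$-dimensional direction sets. These say that the number of $r$-rich $\delta$-balls is at most $\lessapprox |\mathbb T|^2 r^{-3}$ in the relevant range, or at most a bound of the form $|\mathbb T|\,r^{-2}\delta^{-\alpha'}$. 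Such a bound improves on the trivial $L^2$ count $|\mathbb T|\cdot(\#\text{tubes per direction})/r$ precisely when $r$ is near $N_i$. We decompose $\int|F_1F_2|^4$ according to dyadic values $n_i(Q)\sim r_i$. We bound $\#\{Q: n_1\sim r_1,n_2\sim r_2\}$ by the minimum of the incidence bounds for each family, together with the bilinear transversality count $\lesssim |\mathbb T_1||\mathbb T_2|$ (two transverse tubes meet in $O(1)$ squares). We then sum over $r_1,r_2$. The right-hand side equals $\prod_i(\sum_\theta\|F_{i,\theta}\|_4^2)^{1/4}\approx\prod_i (N_i(\lambda_i^4 |\mathbb T_i|R^{3/2}/N_i)^{1/2})^{1/4}$, and comparing exponents should yield a strict gain $\Gamma_{8,\alpha}<\frac\alpha8-\frac14$.

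Finally, we optimize: in each case we take the better of the trivial estimate (good when some $r_i\ll N_i$) and the incidence estimate (good when the $r_i$ are near-maximal). We verify that the worst case is strictly better than the sharp example for arbitrary collections, namely $M=R^{1/2}$ consecutive directions concentrated on an $R^{1/2}/M$-square. That example is exactly excluded by regularity, which in incidence language means that no $\delta$-square can be $\delta^{-\alpha/2}$-rich in too many places. The main obstacle is the incidence step. The tubes from one family need not satisfy a Frostman condition in position, only in direction, and multiple parallel tubes per $\theta$ are allowed. So we must first reduce, by further pigeonholing and a two-ends/rescaling argument (induction on scales over intermediate caps of size $K^{-1}$), to a configuration where a known Szemer\'edi--Trotter estimate for well-spaced or direction-regular tubes applies with a quantitative power gain. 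We would try first the Guth--Solomon--Wang bound for well-spaced tubes after locally rescaling caps, and fall back on the Fu--Ren $(s,t)$-Furstenberg incidence estimate if that fails.
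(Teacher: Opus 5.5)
There is a genuine gap, and it is more basic than the incidence step you flag. Your chain $\int_Q|F_1F_2|^4\le\|F_1F_2\|_{L^\infty(Q)}^2\int_Q|F_1F_2|^2\lessapprox|Q|\prod_i\lambda_i^4 n_i(Q)^3$, followed by counting $R^{1/2}$-squares by richness, cannot improve on $\frac{\alpha}{8}-\frac14$.

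Take, for every active $\theta$, all $\sim R^{1/2}$ parallel tubes covering $B_R$ (e.g.\ $F_{i,\theta}$ a modulated bump of amplitude $\lambda_i$ adapted to $B_R$). This is compatible with all of your pigeonholing. Then $n_i(Q)=N_i$ for every one of the $\sim R$ squares $Q$, so your bound for $\int_{B_R}|F_1F_2|^4$ is $R^2\prod_i\lambda_i^4N_i^3$. Your right-hand side to the eighth power is $\prod_i\lambda_i^4N_i|\mathbb{T}_i|R^{3/2}=R^{4}\prod_i\lambda_i^4N_i^2$. The ratio is $N_1N_2R^{-2}\sim R^{\alpha-2}$, which is exactly the trivial exponent.

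In this configuration every square really is maximally rich, and the claimed $|\mathbb{T}|^2r^{-3}\sim R/N_i$ bound is false. So no incidence theorem, and no further pigeonholing of the single-scale tube data, can help. Your premise that squares where both $n_i(Q)$ are near-maximal must be rare is wrong. What is true is that $|F_i|$ can be close to $\lambda_i n_i(Q)$ only on a small part of $B_R$: by $L^2$ orthogonality, $|F_i|^2$ averages $\lambda_i^2N_i$ there. Where constructive interference actually occurs is invisible at scale $R$; it is encoded in wave packets at coarser scales.

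This is what the paper's argument supplies. It pigeonholes over all scales $R_j=R^{2^{-j}}$, recording for each scale:
\begin{itemize}
\item the amplitude $H_{i,j}$ of the coarse wave packets $T_j$;
\item their number $M_{i,j}$ per direction;
\item the richness parameters $r_{i,j}$ and $r_{i,j\subset j+1}$.
\end{itemize}
It then bounds $\lambda_{-1}$ by a product of growth factors $L_j$.

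The non-concentration in position that you correctly identify as missing is available only for $j\ge1$. Proposition~\ref{Fuesst} combines $L^2$ orthogonality on an $sR_j^{1/2}\times R_j$ slab with AD-regularity of the subcaps $\theta_{j-1}\subset\theta_j$. It shows that such a slab contains $\lessapprox s^{1-\alpha}(H_{i,j-1}/H_{i,j})^2R_j^{\alpha}$ of the tubes $T_j$ in direction $\theta_j$. This is the Katz--Tao input for \cite{DemWang} when $\alpha\le\frac12$, or for \cite{doprod} when $\alpha>\frac12$, applied at scales $j\ge1$. At $j=0$, where your sketch tries to apply incidence bounds, only bilinear Kakeya is used.

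The non-concentration is strong exactly when the amplitude jumps $H_{i,j}/H_{i,j-1}$ are large. The opposite regime is controlled by the triangle-inequality and $L^2$ constraints \eqref{ee1}--\eqref{ee2}. The gain comes from a weighted average of the iterated inequalities over all $k\le M$.

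Your proposed fix, rescaling over caps of size $K^{-1}$ to reach a configuration where a known incidence bound applies, contains neither this amplitude bookkeeping nor this mechanism. As written it does not prove the theorem.
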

	The proof will reveal that we may take
	\[
	\Gamma_{8,\alpha}+\frac14=
	\begin{cases}\displaystyle \frac{\alpha}{10},& \displaystyle \alpha\le \frac12,\\
		\displaystyle \frac{\alpha}{4(3-\alpha)},& \displaystyle \frac12<\alpha\le \frac23,\\[6pt]
		\displaystyle \frac{-3\alpha^2+13\alpha-2}{32(3-\alpha)},& \displaystyle \frac23\le \alpha<1.
	\end{cases}
	\]

	There is nothing special about $p=8$, other than potentially being the critical exponent for $\Gamma_{p,\alpha}$ when $\alpha=\frac12$, assuming \eqref{;l jreguti g]9ry]0 p5o0h679=} is sharp. We chose to work with this concrete exponent in order to simplify the computations. We could  instead have worked out our argument for generic $p$, with possibly a different outcome than the one we get by interpolation. See also Remark \ref{kjhe foreygeug-98y9670=u90=679} on the sharpness of our approach. 
	\medskip

	At the heart of our argument for Theorem \ref{mmmain3} when $\alpha\le 1/2$ lies the Szemer\'{e}di--Trotter-type bound in \cite{DemWang},  for rectangles (tubes) satisfying certain non-concentration assumptions. That result was proved (and is only true in that form) in the case when  $\alpha\le \frac12$. The sharp analogous result for $\alpha\in(\frac12,1)$ was proved more recently  in \cite{doprod}. 
	
	The inspiration for our approach to Theorem \ref{mmmain3} comes from an argument used by Guth to reprove decoupling for the parabola. Its details are presented in Section 10.4 of \cite{demeter2020fourier}.
	It relies on the bilinear Kakeya inequality. One of the difficult points in our argument is to find the right way to incorporate the stronger  Szemer\'{e}di--Trotter-type estimate. It seems possible that a variant of this argument could be used to prove Theorem \ref{mmmain1} directly, without using Theorem \ref{mmmain2}.
	\medskip
	
	The proof of Theorem \ref{mmmain3} in Sections \ref{sec:4} and \ref{sec:large-alpha-computation} takes advantage of all the scales. Strictly speaking, it does not use induction on scales, it relies merely on a direct argument based on efficiently iterating inequalities derived in Section \ref{sec:3}. On the other hand, induction on scales is used in Section \ref{sec:5} as in \cite{BD}, to prove Theorem \ref{mmmain1} based on Theorem \ref{mmmain2}.    
	\subsection*{Acknowledgements}
	
	The authors thank OpenAI’s GPT-5.5 Pro, 
	accessed through Indiana University’s institutional ChatGPT Edu service, for assistance with the bookkeeping computations in Section \ref{sec:4} and Section \ref{sec:large-alpha-computation}. The authors  initially established \eqref{cklh hfghrug i[rtuh[h9]]} for $K=1$ themselves, using only $L_0$ and $L_1$. This corresponds to $\Gamma_{8,\alpha}=\frac{5\alpha}{48}-\frac14$ in Theorem \ref{mmmain3} (when $\alpha\le \frac12$). A small further improvement was then obtained by the authors, by also using $L_2$. That version of the argument was then successfully fed to the AI, with the task of finding the optimal exponent.

	All mathematical claims and final verification remain the responsibility of the authors.
	
	The second author would like to thank Larry Guth for many helpful discussions.
	
	The first author is grateful to Zane Li for motivating conversations and suggestions.

	\medskip
	
	\section{An application to Cantor sets}
	\label{sec:2}
	Let us now describe one of the motivations behind our Theorem \ref{mmmain1}, following the work \cite{chang2022decoupling} on generalized arithmetic Cantor sets. Given $n\ge k\ge 2$ and an alphabet $\D=\{d_1, d_2, \ldots, d_k\} \subset \{0, 1, \ldots, n - 1\},$ we write
	\begin{equation}\label{2}
		C=\mathcal{C}_{n}^{\{d_1,\ldots, d_k\}} = \left\{ \sum_{j=1}^{\infty} a_j n^{-j}: a_j \in  \D\right\}.
	\end{equation}
	This is a self-similar set with Hausdorff dimension 
	$$\dim(C)=\alpha=\frac{\log k}{\log n}.$$
	For a given $C = \mathcal{C}_{n}^{d_1,\ldots, d_k},$ and $i\ge 1$ we write 
	$$C_i =  \left\{ \sum_{j=1}^i a_j n^{-j}: a_j \in  \D\right\}$$ 
	and let $\I_i$ be the
	collection of $k^i$ many pairwise disjoint intervals $I$ of the form 
	\begin{equation}
		\label{xjkhugryfugu49[i569]}
		[\;\sum_{j=1}^ia_jn^{-j}, \sum_{j=1}^ia_jn^{-j}+n^{-i}\;],\;a_j\in\{d_1,\ldots,d_k\}.
	\end{equation}
	The collection $\I_i$  is $(\alpha,n^i,C_{AD})$-AD regular, with
	$C_{AD}\sim k.$ Indeed, the upper bound in \eqref{kdjchpvuryue8urtg9t08h=670980} is always satisfied with $C_{AD}\sim k^{1-\alpha}$, as the worst case scenario is a fully packed interval of length $k/n^i$. The lower bound in \eqref{kdjchpvuryue8urtg9t08h=670980} is always satisfied with $C_{AD}\sim k$, as the worst case scenario is having an interval of length $ \frac12n^{-i+1}$ containing only one point in $C_i$.

	We let $D_p(C_i)$ denote the best constant such that
	\begin{equation}
		\label{dycg yuef7reyf854-t856y9}
		\|\sum_{I \in \mc{I}_i} F_I\|_{L^p(\R)} \leq D_p(C_i) (\sum_{I \in \mc{I}_i} \|F_I\|^2_{L^p(\R)} )^{1/2} 
	\end{equation}
	holds for every $F_I: \R\rightarrow \C$ with $\supp \hat F_I \subset I.$ An interpolation argument between $L^2$ and $L^\infty$ shows that
	\begin{equation}
		\label{;kljreiofgy4r849t9-580}
		D_p(C_i)\lesssim k^{i(\frac12-\frac1p)},\;2\le p<\infty,
	\end{equation}
	with an implicit constant independent of $k$, $i$, $n$ and the alphabet. This inequality is sometimes far from being sharp. Indeed, due to self-similarity we see that (for some $A_p\ge 1$)
	$$D_{p}(C_{i+j})\le A_pD_{p}(C_{i})D_{p}(C_j),$$
	in particular $$D_{p}(C_i)\le (A_pD_p(C_1))^{i-1}.$$
	Fix a large enough constant $B_p$, $p>2$.
	Choose a $\Lambda(p)$ alphabet $\D$ with implicit constant at most $B_p$,
	$$D_p(C_1)\le B_p,$$
	for $p$ satisfying $n^{2/p}\sim k$. See \cite{Lamp} for the existence of such $\D$. 
	In this case
	$$D_p(C_i)\le (A_pB_p)^{i}.$$
	However,  $k,n$ may be chosen so that $k^{\frac12-\frac1p}$ is as large as we wish, much larger than $A_pB_p$.
	
	There are also instances when \eqref{;kljreiofgy4r849t9-580} is essentially sharp. We present two types of examples. The first one relies on constructive interference, and becomes increasingly more efficient when $\alpha$ is close to 1. We test the inequality \eqref{dycg yuef7reyf854-t856y9} with 
	$$f(x)=\phi(\frac{x}{n^i})\sum_{f\in C_i} e(fx)$$
	where $\phi$ is a smooth approximation of $1_{[-1,1]}$ with spectrum inside $[-1,1]$. Since
	$$|f(x)|\gtrsim k^i, \;\;|x|\ll 1$$
	we find that for each $p\ge 2$ 
	$$D_p(C_i)\gtrsim \frac{k^{i(\frac12-\frac1p)}}{(\frac{n}k)^{\frac{i}{p}}}.$$
	This lower bound is close to the upper bound \eqref{;kljreiofgy4r849t9-580} when $(n/k)^{1/p}\sim 1$.

	For the second example we use an even $p$, and the fact that 
	$$D_p(C_i)\gtrsim \frac{(E_p(C_i))^{1/p}}{(\#C_i)^{1/2}},$$
	where the $p$-energy is defined as 
	$$E_p(C_i)=\#\{(f_1,\ldots,f_{p})\in (C_i)^{p}:\;f_1+\ldots+f_{p/2}=f_{p/2+1}+\ldots+f_{p}\}.$$
	This follows by noting that if $\widehat{\phi^p}(0)=1$ and $\widehat{\phi}\ge 0$, then
	\begin{align*}
		\|\phi(\frac{x}{n^i})\sum_{f\in C_i} e(fx)\|_{p}^p&=n^i\sum_{f_1,\ldots,f_p\in C_i}\widehat{\phi^p}({n^i}(f_1+\ldots+f_{p/2}-f_{p/2+1}-\ldots-f_p))\\&\ge n^{i}E_p(C_i).
	\end{align*}
	If we choose the alphabet to be an arithmetic progression of length $k$, then  an easy exercise shows that, for some appropriate $K_p$
	$$E_p(C_i)\ge (\frac{k^{p-1}}{K_p})^{i}.$$
	This shows that for each $i$
	\begin{equation}
		\label{chrfy8g=-569y0967=78=78-}
		D_p(C_i)\gtrsim (\frac{k^{\frac12-\frac1p}}{K_p^{1/p}})^{i}.
	\end{equation}
	This lower bound is close to the upper bound \eqref{;kljreiofgy4r849t9-580} when, say, $\log k>K_p^{1/p}$.
	
	\bigskip
	
	Now consider the partition $\Omega_i$ of the $n^{-2i}$-neighborhood $\mathcal{N}_{n^{-2i}}(\I_i)$
	into sets $\theta_I$, where $\theta_I = \mathcal{N}_{n^{-2i}} \cap (I \times \R)$ for $I \in \mathcal{I}_i.$ We note that each $\theta_I$ is roughly a $n^{-i} \times n^{-2i}$ rectangle, and there are $k^i$ many $\theta_I.$
	We let $\Dec_p(C_i)$ denote the best constant such that
	\begin{equation} 
		\label{c h vhryg7yt0gt08-56856=97-6}
		\|\sum_{I \in \mc{I}_i} F_{\theta_I}\|_{L^p(\R^2)} \leq \Dec_p(C_i) (\sum_{I \in \mc{I}_i} \|F_{\theta_I}\|^2_{L^p(\R^2)} )^{1/2}
	\end{equation}
	holds for every $F_{\theta_I}:\R^2 \rightarrow \C$ with $\supp(\hat F_{\theta_I}) \subset \theta_I.$ 
	\medskip
	
	\cite{chang2022decoupling} proved the following relationship between $D_p(C_i)$ and $\Dec_p(C_i).$ We caution that $N(1)$ and $\dim(C)$ in \cite{chang2022decoupling} are denoted here by $k$ and $\alpha$, respectively.
	
	\begin{theorem}\label{thm1}
		Let $p\ge 2$.	Let $C$ be an arithmetic Cantor set defined as in \eqref{2}, and let $C_i$, $D_p(C_i),$ $\Dec_p(C_i)$ be defined as in the previous paragraphs. 
		Assume that 
		\begin{equation}
			\label{hhuihreufy8gu8tug89uh8-578}
			D_p(C_i) \lesssim k^{ \kappa_p i}
		\end{equation}
		for some $\kappa_p\le \frac12-\frac1p$ and each $i$. Then
		\begin{equation}\label{3vjirgrtgurtihu}
			\Dec_{3p}(C_i) \lesssim_{\e,k,\alpha} k^{(\kappa_p+\epsilon) i}
		\end{equation}
		for each $i$ and $\e>0$.
	\end{theorem}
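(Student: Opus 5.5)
The natural route is the one used in \cite{chang2022decoupling}: combine the self-similarity of $C$ with an induction on scales in which the one-dimensional decoupling $D_p$ handles the ``flat'' directions and a trilinear Fourier-restriction/Kakeya-type estimate for the parabola handles the transverse interactions. The exponent $3p$ reflects trilinear $L^{3p}$ versus $L^p$ along each of three transverse pieces. Let $\Dec_{3p}(C_i)$ be the best constant. By self-similarity and parabolic rescaling, I first aim for the submultiplicativity
\[
\Dec_{3p}(C_{i+j})\lesssim \Dec_{3p}(C_i)\,\Dec_{3p}(C_j).
\]
An arc $\theta_I$ with $I\in\I_i$ rescales affinely to the full parabola, and the pieces of $\I_{i+j}$ inside $I$ become a translated copy of $\I_j$. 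It therefore suffices to prove $\Dec_{3p}(C_m)\lesssim_\e k^{(\kappa_p+\e)m}$ along a lacunary sequence of scales, with a loss $C_\e$ per step. Equivalently, it suffices to show that for a large fixed $m$ we have $\Dec_{3p}(C_m)\le C k^{(\kappa_p+\e/2)m}$ with $C$ independent of $m$; iterating then gives \eqref{3vjirgrtgurtihu}.

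Next I would use a Bourgain--Guth broad/narrow decomposition at the first generation. Write $F=\sum_{a\in\D}F_a$, where $F_a$ collects the pieces whose first digit is $a$.

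In the \emph{narrow} case, on a ball most of $|F|$ comes from at most two digits, and the loss is bounded by rescaling, contributing at most $O(1)\,\Dec_{3p}(C_{m-1})$.

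In the \emph{broad} case there are three digits $a_1,a_2,a_3$ with mutually separated intervals, separation $\gtrsim 1/n$, which depends only on $n$ and is acceptable since constants may depend on $k,\alpha$. Then $|F|\lesssim_k \prod_j|F_{a_j}|^{1/3}$.

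For the trilinear term I would use the parabolic trilinear structure: locally at scale $n^{2m}$ the frequency points $(\xi,\xi^2)$ of three transverse caps, on wave-packet scale, behave like a sum of three one-dimensional problems. Concretely, the key estimate to aim for is
\[
\Bigl\|\prod_{j=1}^3|F_{a_j}|^{1/3}\Bigr\|_{L^{3p}(\R^2)}\lesssim_{\e,k} n^{\e m}\prod_{j}\Bigl\|\,\|F_{a_j}\|_{L^p_{x_1}}\Bigr\|^{1/3}_{\ldots}
\]
after freezing one variable. I would restrict to lines in a direction transverse to all three caps, so that on each such line $F_{a_j}$ has one-dimensional spectrum, namely the projection of the arcs $\theta_I$. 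This projection is a $C_m$-type Cantor set at scale $n^{-m}$, a dilate or affine image of a piece of $C_m$, because the projection of the parabola arcs onto the $\xi_1$-axis is exactly $\I_m$. Apply \eqref{hhuihreufy8gu8tug89uh8-578} on each line, then integrate over lines using the trilinear Kakeya/Brascamp--Lieb inequality in $\R^2$. With three transverse families, Hölder in the product form converts $L^{3p}$ of the triple product into the product of the $L^p$ norms. Summing the squares over $I$ gives
\[
k^{\kappa_p m}\Bigl(\sum_I\|F_{\theta_I}\|_{3p}^2\Bigr)^{1/2}
\]
after using Bernstein/locally constant properties to pass from $L^p$ on lines to $L^{3p}$.

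The main obstacle is this last step. Exponents must match: $L^{3p}$ on $\R^2$ versus $L^p$ on one-dimensional fibres. The trilinear gain must exactly compensate the change $p\to 3p$ without losing powers of $k^m$, which would destroy the exponent $\kappa_p$. My first attempt would be to do the trilinear Hölder via the Loomis--Whitney-type inequality in the plane for three directions. Each $|F_{a_j}|^p$ is essentially constant along its own direction at scale $n^{2m}$, so $\int\prod_j|F_{a_j}|^{p}$ is bounded by $\prod_j(\int_{\text{fibres}})$. Finally I would close the induction by combining the narrow and broad estimates, absorbing the $C_{k,\e}$ losses into $k^{\e m}$ for $m$ large.
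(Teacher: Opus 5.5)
\textbf{There is a genuine gap in the broad case, at exactly the step you flag.}

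Restricting $F_{a_j}$ to lines parallel to a unit vector $v$ gives a function whose spectrum is the projection of the arcs $\theta_I$ onto $\R v$. This projection is an affine copy of $\I_m$ only when $v=(1,0)$, i.e. projection to the $\xi_1$-axis. That direction is the same for all pieces and uses no curvature. Fubini plus \eqref{hhuihreufy8gu8tug89uh8-578} along horizontal lines therefore only gives the trivial bound $\Dec_p(C_m)\le D_p(C_m)$ at the \emph{same} exponent $p$.

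For any other $v$, the projection is the image of $\I_m$ under $x\mapsto v_1x+v_2x^2$. Its nonlinearity across a first-generation arc is $\sim n^{-2}\gg n^{-m}$, so it is not a copy of $C_m$, and the hypothesis says nothing about it.

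Your local-constancy claim also fails. The spectrum of $F_{a_j}$ spans an arc of length $1/n$, so $|F_{a_j}|$ is essentially constant only on $n\times n^2$ tubes, not at scale $n^{2m}$. Even granting constancy, the three-direction Brascamp--Lieb inequality in $\R^2$ returns $\||F_{a_j}|^p\|_{L^{3/2}}$ on transversal lines, i.e. $L^{3p/2}$ norms on fibres, not $L^p$. So nothing in your plan converts $L^{3p}$ into $L^p$; the factor $3$ is not a trilinear phenomenon.

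Separately, a broad/narrow split at the first digit with an $O(1)$ narrow loss compounds to $O(1)^m$, which is not $O_\e(k^{\e m})$. You would have to split at a depth $l=l(\e)$ instead.

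\textbf{The route the paper indicates.} Remark \ref{finrem}, following \cite{BD} and \cite{chang2022decoupling}, uses a multiscale bilinear bootstrap built on the exponent triple $(3p,\tfrac{3p}{2},p)$. The ingredients are as follows.

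After the bilinear reduction, one uses the bilinear $(3p,\tfrac{3p}{2})$ decoupling coming from bilinear Kakeya. This is the $L^4$ bilinear square function estimate interpolated with $L^\infty$.

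One then applies H\"older in the form $\tfrac{2}{3p}=\tfrac{1/2}{3p}+\tfrac{1/2}{p}$, so that $\xi=\tfrac12$.

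The one-dimensional hypothesis is used only \emph{locally}. On a ball $\Delta$ of radius $R^{1/2}$, an arc $\tau$ of length $R^{-1/4}$ is flat up to the uncertainty scale $R^{-1/2}$. Hence $F_\tau$ is effectively one-dimensional on $\Delta$, along the tangent of $\tau$, with spectrum an affine copy of $C_{m/2}$ when $R=n^{2m}$. This gives $\|F\|_{p,R^{1/2},\Delta}\lesssim k^{\kappa_p m/2}\|F\|_{p,R,\Delta}$, which replaces the $L^2$ orthogonality in \eqref{inv7}.

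Iterating $R\to R^{1/2}$ as in Proposition \ref{propinv5} and Remark \ref{finrem}, now with $\eta=\alpha\kappa_p/8$, gives $\Dec_{3p}(C_m)\lessapprox R^{4\eta}=k^{\kappa_p m}$. This is \eqref{3vjirgrtgurtihu}.

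Your instinct to freeze a variable and decouple along lines is sound, but it is only legitimate at this local scale. That is why an iteration over scales is needed, rather than a single trilinear estimate.
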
   
	We note that $\lesssim_{\e,k,\alpha}$ is the same as $\lesssim_{\e,k,n}$. 
	
	At its core, the proof \cite{BD} of the decoupling for the parabola ($\alpha=1$) relies on a bootstrapping argument that combines bilinear Kakeya with $L^2$ orthogonality. The derivation of \eqref{3vjirgrtgurtihu} can be seen from the same argument, with \eqref{hhuihreufy8gu8tug89uh8-578} used in place of $L^2$ orthogonality. See Remark \ref{finrem}.
	
	Inequality \eqref{3vjirgrtgurtihu} becomes powerful when \eqref{hhuihreufy8gu8tug89uh8-578} is known to hold 
	with $\kappa_p<\frac12-\frac1p$, as the latter represents an improvement over $L^2$ orthogonality.
	
	Theorem \ref{thm1}  gives (via \eqref{;kljreiofgy4r849t9-580}) the universal estimate 
	\begin{equation}
		\label{ewjurefe8g9u56-8g-56y-67u=78=}
		\Dec_{p}(C_i)\lesssim_{\epsilon,k,\alpha} (\# C_i)^{\frac12-\frac3p+\epsilon},\;p\ge 6
	\end{equation}
	for arbitrary $C_i$. However, in reality this bound does not depend on self-similarity or AD-regularity. It simply follows by interpolating the $L^6$ decoupling for the parabola
	\cite{BD} with the trivial $L^\infty$ bound, that only relies on the cardinality of $C_i$. 
	
	We note that when $\log k>K_p^{1/p}$, the lower bound  \eqref{chrfy8g=-569y0967=78=78-}
	shows that \eqref{hhuihreufy8gu8tug89uh8-578}
	does not hold for any $\kappa_p<\frac12-\frac1p-\log_k(K_p^{1/p})$, when $C_i$ as in the second example from above. When applied  to these sets, Theorem \ref{thm1} delivers an upper bound no better than 
	\begin{align*}
		\Dec_{3p}(C_i)&\lesssim_{\e,k,\alpha}k^{(\frac12-\frac1p-\log_k(K_p^{1/p})+\epsilon)i}\\&\sim_{\e,k,\alpha,p}k^{(\frac12-\frac1p+\epsilon)i},\;p\ge 2.
	\end{align*}
	This is the same as the generic upper bound \eqref{ewjurefe8g9u56-8g-56y-67u=78=}.

	As a consequence of our Theorem \ref{mmmain1}, we improve on \eqref{ewjurefe8g9u56-8g-56y-67u=78=} in all cases. We also improve on \eqref{3vjirgrtgurtihu} whenever
	$$\frac12-\frac3p-c_{p,\alpha}<\kappa_{p/3}.$$
	
	\begin{theorem}\label{mainthm}
		Let $C$ be an arithmetic Cantor set defined as in \eqref{2}. For each $6<p<\infty$, let $c_{p,\alpha}>0$ be the exponent in Theorem \ref{mmmain1}. Then for each $i\ge 1$
		\begin{equation}\label{rjhef refurtgu09h3}
			\Dec_{p}(C_i) \lesssim_{\e,k,\alpha}  k^{i(\frac{1}{2} - \frac{3}{p} - c_{p,\alpha}+\e)}.
		\end{equation}
	\end{theorem}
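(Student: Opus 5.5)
We deduce Theorem \ref{mainthm} directly from Theorem \ref{mmmain1} by matching scales. Fix $i\ge1$ and set $R=n^{2i}$, so $R^{1/2}=n^i$ and $R^{-1}=n^{-2i}$. The collection $\I_i$ from \eqref{xjkhugryfugu49[i569]} consists of $k^i$ pairwise disjoint closed intervals of length $n^{-i}=R^{-1/2}$ inside $[0,1]\subset[-1,1]$. As observed above, it is $(\alpha,n^i,C_{AD})$-AD regular with $C_{AD}\sim k$, a constant independent of $i$. Two small technicalities need attention. First, the intervals in \eqref{xjkhugryfugu49[i569]} may share endpoints when consecutive digits occur in $\D$. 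We handle this by shrinking each one slightly, or by noting that boundary overlaps have measure zero and do not affect the Fourier restriction. Second, the AD-regularity condition must be checked for all $J$ with $2n^{-i}\le|J|\le4$ centered in $\cup I$. This is exactly the computation sketched in Section \ref{sec:2}, using the self-similar structure at the scale $n^{-j}$ comparable to $|J|$.

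Next we identify the decoupling regions. The sets $\theta_I=\cN_{n^{-2i}}\cap(I\times\R)$ are essentially flat $n^{-i}\times n^{-2i}$ regions covering $\cN_{1/R}(\I_i)$. We may therefore take $\Theta(\I_i)=\{\theta_I: I\in\I_i\}$ in Theorem \ref{mmmain1}; if the paper's $\Theta$ is a slightly different cover, each $\theta$ meets $O(1)$ of the $\theta_I$ and vice versa. Given $F_{\theta_I}$ with $\supp\widehat{F_{\theta_I}}\subset\theta_I$, put $F=\sum_I F_{\theta_I}$. Its spectrum lies in $\cN_{1/R}(\I_i)$, and $F_{\theta_I}=F_{\theta_I}$ in the paper's notation, because the $\theta_I$ are pairwise disjoint up to null sets. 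Applying Theorem \ref{mmmain1} with $\#\Theta(\I_i)=k^i$ then gives
\[
\Big\|\sum_I F_{\theta_I}\Big\|_{L^p(\R^2)}\lesssim_{\epsilon,C_{AD},p,\alpha} R^{\epsilon}\,k^{i(\frac12-\frac3p-c_{p,\alpha})}\Big(\sum_I\|F_{\theta_I}\|_{L^p}^2\Big)^{1/2}.
\]

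Finally we convert the loss $R^\epsilon$. We have $R^\epsilon=n^{2i\epsilon}=k^{2i\epsilon/\alpha}$, since $n=k^{1/\alpha}$. Replacing $\epsilon$ by $\alpha\epsilon/2$ yields \eqref{rjhef refurtgu09h3}. The constant depends on $\epsilon,p,\alpha$ and on $C_{AD}\sim k$, which gives the stated dependence $\lesssim_{\e,k,\alpha}$.

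If there is any obstacle, it is the uniform verification of AD-regularity for both the lower and upper bounds in \eqref{kdjchpvuryue8urtg9t08h=670980} at every scale. In particular, intervals $J$ may straddle boundaries between level-$j$ blocks; this costs at most a factor of $O(k)$ in $C_{AD}$, which is independent of $i$.
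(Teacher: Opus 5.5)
Your proposal is correct and matches the paper's proof: set $R=n^{2i}$, note that $\I_i$ is $(\alpha,R^{1/2},C_{AD})$-AD regular with $C_{AD}\sim k$, apply Theorem \ref{mmmain1} with $\#\Theta(\I_i)=k^i$, and convert $R^{\e}=k^{2i\e/\alpha}$. The technicalities you flag (shared endpoints, and matching the cover $\Theta(\I_i)$ with the regions $\theta_I$) are harmless; the paper passes over them silently.
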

	\begin{proof}
		Write $R=n^{2i}$. The collection $\I_i$ in  \eqref{xjkhugryfugu49[i569]} is $(\alpha,R^{1/2},C_{AD})$-AD regular, with 
		$C_{AD}\sim k.$
		
		We apply Theorem \ref{mmmain1} to find 
		$$\Dec_{p}(C_i)\lesssim_{\e,k,\alpha}R^{\e}k^{i(\frac12-\frac3p-c_{p,\alpha})}.$$
		Since $R^\e=k^{i2\e/\alpha}$, we are done.
	\end{proof}
	\begin{remark}
		\label{adden}
		Consider the lattice points on the parabola given by
		$$S_i=\{(\sum_{j=0}^{i-1}a_jn^j,(\sum_{j=0}^{i-1}a_jn^j)^2):\;a_j\in\D\}.$$
		An application of Cauchy--Schwarz shows that for each even $p\ge 2$
		$$E_p(S_i)\gtrsim \frac{(\#S_i)^p}{\#(S_i+\ldots+S_i)},$$
		where the sum in the denominator is $p/2$-fold. 
		
		Assume now the alphabet $\D=\{d_1,\ldots,d_k\}$ is an arithmetic progression, so $C_i$ is a generalized arithmetic progression of dimension $i$. Since $$S_i+\ldots+S_i\subset\{\sum_{j=0}^{i-1}b_jn^j:\;b_j\in \D+\ldots+\D\}\times \{0,1,\ldots,pn^{2i}/2\},$$
		and since
		\begin{equation}
			\label{bb lfdvjhjsgjkh s;gruwgiuo}
			\#(\D+\ldots+\D)\sim_p\#\D,
		\end{equation}
		we find that
		$$\#(S_i+\ldots+S_i)\lesssim_p(O_p(1)k)^in^{2i},$$
		and thus
		$$E_p(S_i)\gtrsim_p\frac{k^{i(p-1)}}{(O_p(1)n)^{2i}}.$$
		The standard argument of testing \eqref{c h vhryg7yt0gt08-56856=97-6} with exponential sums
		shows that for each even $p\ge 2$
		\begin{equation}
			\label{cbd vlfdghv flskh}
			E_p(S_i)\lesssim \Dec_{p}(C_i)^p(\#S_i)^{p/2}.
		\end{equation}
		These together force the lower bound
		$$\Dec_{p}(C_i)\gtrsim \left(\frac{k^{\frac12-\frac1p}}{O_p(1)n^{2/p}}\right)^{i}=\left(\frac{k^{\frac12-\frac1p-\frac2{p\alpha}}}{O_p(1)}\right)^{i}.$$
		Comparing this with \eqref{rjhef refurtgu09h3} when $p\ge 6$, we find that
		$$c_{p,\alpha}\le \frac{2(1-\alpha)}{p\alpha}.$$
		Theorem \ref{mmmain1} together with \eqref{cbd vlfdghv flskh} shows that for each even $p\ge 8$
		$$E_p(S_i)\lesssim_{\epsilon,k} n^{i\e}(\#S_i)^{p-3-pc_{p,\alpha}}.$$
		If \eqref{dbvh bdfhvkfkhsgh jw;gh} is indeed true, this would imply diagonal behavior 
		$$E_p(S_i)\lesssim_{\epsilon,k} n^{i\e}(\#S_i)^{p/2}$$
		for the critical exponent $p=2+\frac4\alpha$. This in turn implies that if the $p/2$-fold sumset $S_i+\ldots+S_i$ has minimal expansion \eqref{bb lfdvjhjsgjkh s;gruwgiuo} in the $x$-direction, it should have essentially maximal expansion in the $y$-direction, filling in a significant fraction of $\{0,1,\ldots,pn^{2i}/2\}$. 
	\end{remark}

	\section{Preliminaries for the proof of Theorem \ref{mmmain3}}
	\label{sec:3}
	In this section we perform a multi-scale decomposition, introduce a number of key parameters, and prove key structural inequalities.  
	\medskip
	
	Fix two intervals $J_1,J_2\subset [-1,1]$ separated by  $\sim 1$. For $i\in\{1,2\}$, let $\I_i$ be an $(\alpha,R^{1/2},C_{AD})$-AD regular collection of intervals inside $J_i$. For $j\ge 0$ we call $\I_{i,j}$ the $(\alpha,R^{2^{-j-1}},C_{AD})$-AD regular collection of $1/R^{2^{-j-1}}$-intervals covering $\I_i$. Note that $\I_{i,0}=\I_i$.

	Given $R$, we pick the smallest integer $M$ such that $R^{\frac{1}{2^M}}\le 2$. Note that $$M\sim \log\log R.$$  
	
	Throughout the argument, the index $j$ will range through $\{0,1,\ldots,M\}$, and $i\in\{1,2\}$.
	We consider the radii $R_j=R^{\frac1{2^j}}$ and denote by $B_{R_j}$ a generic ball with radius $R_j$. 
	We write $\Theta_{i,j}$ for the collection of $R_j^{-1/2}\times R_j^{-1}$-rectangles $\theta_j$ that cover $\cN_{1/R_j}(\I_{i,j})$.

	We will use $$\#\{\theta_j\in\Theta_{i,j}:\;\theta_j\subset \theta_{j+1}\}\le C_{AD}^2R_{j+1}^{\alpha/2}\lesssim_{C_{AD}}R_{j+1}^{\alpha/2}.$$
	Due to its irrelevance, the dependence on $C_{AD}$ will not be recorded explicitly in this section; we will simply write $\lesssim$ rather than $\lesssim_{C_{AD}}$. Each inequality will have  an implicit loss of at most  $(C_{AD})^{O(1)}$. In the next section, when these inequalities will be combined, the  losses will accumulate to at most $(C_{AD})^{O(M)}$, which is $\lessapprox_{C_{AD}}1$. 
	
	We write $T_j$ for a generic $R_j^{1/2}\times R_j$-rectangle dual to some $\theta_j$, lying inside $B_{R_j}$.
	
	\begin{figure}[ht]
		\centering
		\begin{tikzpicture}[scale=1.05,line cap=round,line join=round,>=Stealth]
			\draw[semithick] (0,0) circle (2.45);
			\begin{scope}[rotate=8]
				\draw[semithick] (-2.42,-0.36) rectangle (2.42,0.26);
			\end{scope}
			\node at (0.55,1.25) {$B_{R_j}$};
			\node at (1.25,-0.08) {$T_j$};
			\draw[semithick] (-0.75,-0.20) circle (0.46);
			\begin{scope}[shift={(-0.75,-0.20)},rotate=-13]
				\draw[semithick,pattern=north east lines] (-0.12,-0.35) rectangle (0.18,0.35);
			\end{scope}
			\draw[semithick,->] (-1.93,-1.95) to[out=80,in=210] (-0.86,-0.47);
			\node[anchor=east] at (-1.93,-1.95) {$T_{j+1}$};
			\draw[semithick,->] (0.62,-1.55) to[out=120,in=310] (-0.33,-0.42);
			\node[anchor=west] at (0.62,-1.55) {$B_{R_{j+1}}$};
		\end{tikzpicture}
	\end{figure}
	Fix a ball $B_R=B_{R_0}$ of radius $R$. Fix two functions $F_1,F_2$ with spectrum inside $\cN_{1/R}(\I_i)$ 
	
	For a cleaner exposition, we will use $\approx$ and $\lessapprox$ somewhat casually, hiding standard technicalities under the rug.
	\\
	\\
	Step 0. Pigeonholing at level $j=0$. The parameters $H_{i,0}$, $M_{i,0}$, $\bth_{i,0}$. 
	\\
	\\
	Recall that
	$$F_i(x)=\sum_{\theta_0\in\Theta_{i,0}}F_{i,\theta_0}(x).$$
	We use the standard wave packet decomposition (see e.g.  \cite{demeter2020fourier})
	$$F_{i,\theta_0}(x)\approx\sum_{T\in\T(\theta_0,B_R)}F_{i,T}(x),\;\;x\in B_R$$
	where $\T(\theta_0,B_R)$ is the collection of tubes dual to $\theta_0$, intersecting $B_R$.
	\medskip
	
	Since we aim to prove Theorem \ref{mmmain3}, we seek an upper bound for the quantity
	$$\frac{\int_{B_R}|F_1F_2|^4}{(\sum_{\theta_0\in\Theta_{1,0}}\|F_{1,\theta_0}\|^2_{L^4(B_R)})^2(\sum_{\theta_0\in\Theta_{2,0}}\|F_{2,\theta_0}\|^2_{L^4(B_R)})^2}.$$
	
	First, we select parameters $H_{i,0}$, $M_{i,0}$ such that there is a collection $\Theta_{i,0}(B_R)\subset\Theta_{i,0}$ and a collection $\T'(\theta_0,B_R)\subset\T(\theta_0,B_R)$ for each $\theta_0\in \Theta_{i,0}(B_R)$  such that 
	$$\#(\T'(\theta_0,B_R))\sim M_{i,0}$$
	$$\|F_{i,T_0}\|_{\infty}\sim H_{i,0},\;\forall \;T_0\in\T'(\theta_0,B_R)$$
	$$\int_{B_R}|F_1F_2|^4\approx \int_{B_R}|F_1'F_2'|^4$$
	where 
	$$F_i'=\sum_{T_0\in \T'_{i,0}(B_R)}F_{i,T_0},$$
	$$\T'_{i,0}(B_R)=\cup_{\theta_0\in\Theta_{i,0}(B_R)}\T'(\theta_0,B_R).$$
	The existence of such collections follows from the triangle inequality, and the fact that there are $\lessapprox 1$ many relevant values for the parameters $H_{i,0}$ and $M_{i,0}$. Writing 
	$$F_{i,\theta_0}'=\sum_{T_0\in \T'(\theta_0,B_R)}F_{i,T_0}$$
	we note that 
	\begin{equation}
		\label{jufgyyg8i 0-i=239r058t90}\frac{\int_{B_R}|F_1F_2|^4}{(\sum_{\theta_0\in\Theta_{1,0}}\|F_{1,\theta_0}\|^2_{L^4(B_R)})^2(\sum_{\theta_0\in\Theta_{2,0}}\|F_{2,\theta_0}\|^2_{L^4(B_R)})^2}\lessapprox \lambda_{-1}(B_R),
	\end{equation}
	where
	$$
	\lambda_{-1}(B_R)=\frac{\int_{B_R}|F'_1F'_2|^4}{(\sum_{\theta_0\in\Theta_{1,0}(B_R)}\|F'_{1,\theta_0}\|^2_{L^4(B_R)})^2(\sum_{\theta_0\in\Theta_{2,0}(B_R)}\|F'_{2,\theta_0}\|^2_{L^4(B_R)})^2}.
	$$
	This is due to the fact that (see e.g. Exercise 2.7 (d) in \cite{demeter2020fourier})
	$$\|F'_{i,\theta_0}\|_{L^4(B_R)}\lesssim \|F_{i,\theta_0}\|_{L^4(B_R)}.$$
	We write
	$$\bth_{i,0}=\#\Theta_{i,0}(B_R).$$
	\\
	\\
	Step 1. Pigeonholing at level $j=1$. Parameters $r_{i,1}$, $r_{i,0\subset1}$, $H_{i,1}$, $M_{i,1}$, $\bth_{i,1}$. The collection $\B_0$ and the distinguished ball $B_{R_1}^*$.
	\\
	\\
	Consider a finitely overlapping cover of $B_{R}$ by balls $B_{R_1}$. We first note that
	$$F_i'(x)\approx\sum_{T_0:\;T_0\cap B_{R_1}\not=\emptyset}F_{i,T_0}(x),\;x\in B_{R_1}.$$
	We select a subcollection $\B_0$ of these balls satisfying three types of conditions.
	
	First, we require that
	\begin{equation}
		\label{jufgyyg8i 0-i=239r058t901}
		\int_{B_R}|F'_1F'_2|^4\approx \sum_{B_{R_1}\in\B_0}\int_{B_{R_1}}|F'_1F'_2|^4.
	\end{equation}
	
	Second, we assume the existence of two constants denoted by $r_{i,{1}}$ and $r_{i,0\subset 1}$ such that (for each $i\in\{1,2\}$), each $B_{R_1}\in\B_0$ intersects a special collection called $\T_{i,B_{R_1}\supset}$, consisting of
	$$\sim N_{i,0}=r_{i,{1}}r_{i,0\subset 1}$$
	many $T_0\in \T_{i,0}'(B_R)$. There is of course (essentially) a unique $T_0\in \T_{i,B_{R_1}\supset}$ for each contributing direction $\theta_0$.  Moreover, we require that for each $i$, these directions are organized as follows. There are $\sim r_{i,1}$ many $\theta_1\in\Theta_{i,1}$, each containing $\sim r_{i,0\subset 1}$ many contributing directions $\theta_0$. 
	Write
	$$F'_{i,B_{R_1}}=\sum_{T_0\in \T_{i,B_{R_1}\supset}}F_{i,T_0}'.$$
	Finally, we 
	require that 
	\begin{equation}
		\label{jhfureguriegi09hi[y]}
		\int_{B_{R_1}}|F_1'F_2'|^4\approx \int_{B_{R_1}}|F'_{1,B_{R_1}}F'_{2,B_{R_1}}|^4.
	\end{equation}
	For any given $B_{R_1}$, the existence of parameters $r_{i,1}$, $r_{i,0\subset 1}$ with this property follows from the triangle inequality and pigeonholing. Since the range of the relevant such parameters has size $\lessapprox 1$, we can choose $\B_0$ to be represented by the same parameters. 
	The function $F_{i,B_{R_1}}'$ has spectrum inside $\cN_{1/R}$, thus also inside $\cN_{1/R_1}$. Thus, it has a wave packet decomposition on $B_{R_1}$ 
	$$F_{i,B_{R_1}}'(x)\approx \sum_{\theta_1\in\Theta_{i,1}}\sum_{T_1\in \T(\theta_1,B_{R_1})}F'_{i,B_{R_1},T_1}(x),\;x\in B_{R_1}.$$
	The first sum is restricted to  $r_{i,1}$ many $\theta_1$. For each such $\theta_1$, there are $\sim r_{i,0\subset 1}$ many $\theta_0\subset \theta_1$, each contributing with a tube $T_0(\theta_0)\in \T_{i,B_{R_1}\supset}$. We have the following connection between wave packet decompositions at the two scales
	\begin{equation}
		\label{krjghrtughptriguitruhuh9ui}  
		\sum_{T_1\in \T(\theta_1,B_{R_1})}F'_{i,B_{R_1},T_1}(x)\approx \sum_{\theta_0\subset \theta_1}F'_{i,T_0(\theta_0)}(x),\;x\in B_{R_1}.
	\end{equation}
	
	Third, we can make the selection of $\B_0$ so that there are  parameters $H_{i,1}$, $M_{i,1}$, $\bth_{i,1}$, a collection $\Theta_{i,1}(B_{R_1})\subset\Theta_{i,1}$ and a collection $\T'(\theta_1,B_{R_1})\subset\T(\theta_1,B_{R_1})$ for each $\theta_1\in \Theta_{i,1}(B_{R_1})$  such that 
	$$\#(\Theta_{i,1}(B_{R_1}))\sim\bth_{i,1},\;\forall\;B_{R_1}\in\B_0$$
	$$\#(\T'(\theta_1,B_{R_1}))\sim M_{i,1},\;\forall\;\theta_1\in \Theta_{i,1}(B_{R_1})$$
	$$\|F'_{i,B_{R_1},T_1}\|_{\infty}\sim H_{i,1},\;\forall \;T_1\in\T'(\theta_1,B_{R_1})$$
	\begin{equation}
		\label{jufgyyg8i 0-i=239r058t902}
		\int_{B_{R_1}}|F_1'F_2'|^4\approx \int_{B_{R_1}}|F_1''F_2''|^4
	\end{equation}
	where 
	\begin{equation}
		\label{krjghrtughptriguitruhuh9ui1}  
		F_i''(x)= \sum_{T_1\in \T'_{i,1}(B_{R_1})}F'_{i,B_{R_1},T_1}(x),\;x\in B_{R_1}
	\end{equation}
	$$\T'_{i,1}(B_{R_1})=\cup_{\theta_1\in\Theta_{i,1}(B_{R_1})}\T'(\theta_1,B_{R_1}).$$

	We of course do not require that the direction set $\Theta_{i,1}(B_{R_1})$ is the same for each $B_{R_1}$, only that its size is independent of $B_{R_1}$.
	
	We end this round of pigeonholing by collecting some useful information. Comparing \eqref{krjghrtughptriguitruhuh9ui} and \eqref{krjghrtughptriguitruhuh9ui1}, and using the $L^2$ orthogonality (on any $B_{R_1}\in \B_0$) of the  wave packets corresponding to different $T_1$, we have
	\begin{align*}
		\|\sum_{T_1\in \T'_{i,1}(B_{R_1})}F'_{i,B_{R_1},T_1}\|_{L^2(B_{R_1})}&\lesssim \|\sum_{T_1\in \T(\theta_1,B_{R_1})}F'_{i,B_{R_1},T_1}\|_{L^2(B_{R_1})}\\&\approx \|\sum_{\theta_0\subset \theta_1}F'_{i,T_0(\theta_0)}\|_{L^2(B_{R_1})}.
	\end{align*}
	Both first and third terms are computable, as the wave packets associated with distinct $T_0$ are also orthogonal on $B_{R_1}$. The above inequality becomes
	\begin{equation}
		\label{n re hvhturtgu56i96i0-67}
		H_{i,1}\bth_{i,1}^{1/2}M_{i,1}^{1/2}R_1^{3/4}\lessapprox H_{i,0}r_{i,0\subset 1}^{1/2}R_1.
	\end{equation}
	Next, pick $\theta_1\in \Theta_{i,1}(B_{R_1})$. Using \eqref{krjghrtughptriguitruhuh9ui}, the (essential) pairwise disjointness of the spatial supports of $F'_{i,B_{R_1},T_1}$ and the triangle inequality we get
	\begin{equation}
		\label{n re hvhturtgu56i96i0-671}
		H_{i,1}\lessapprox \|\sum_{T_1\in \T(\theta_1,B_{R_1})}F'_{i,B_{R_1},T_1}\|_{\infty}\approx \|\sum_{\theta_0\subset \theta_1}F'_{i,T_0(\theta_0)}\|_{\infty}\lesssim H_{i,0}r_{i,0\subset 1}.
	\end{equation}

	Write
	$$
	\lambda_0(B_{R_1})= \frac{\int_{B_{R_1}}|F''_1F''_2|^4}{(\sum_{\theta_1\in\Theta_{1,1}(B_{R_1})}\|F''_{1,\theta_1}\|^2_{L^4(B_{R_1})})^2(\sum_{\theta_1\in\Theta_{2,1}(B_{R_1})}\|F''_{2,\theta_1}\|^2_{L^4(B_{R_1})})^2}.
	$$
	Note that due to \eqref{jufgyyg8i 0-i=239r058t901}, \eqref{jhfureguriegi09hi[y]} and \eqref{jufgyyg8i 0-i=239r058t902} we have
	$$\lambda_{-1}(B_R)\lessapprox \max_{B_{R_1}\in\B_0}\lambda_0(B_{R_1})\times$$$$\frac{\sum_{B_{R_1}\in\B_0}{(\sum_{\theta_1\in\Theta_{1,1}(B_{R_1})}\|F''_{1,\theta_1}\|^2_{L^4(B_{R_1})})^2(\sum_{\theta_1\in\Theta_{2,1}(B_{R_1})}\|F''_{2,\theta_1}\|^2_{L^4(B_{R_1})})^2}}{(\sum_{\theta_0\in\Theta_{1,0}(B_R)}\|F'_{1,\theta_0}\|^2_{L^4(B_R)})^2(\sum_{\theta_0\in\Theta_{2,0}(B_R)}\|F'_{2,\theta_0}\|^2_{L^4(B_R)})^2}.$$We choose $B_{R_1}^{*}\in\B_0$ to maximize $\lambda_{0}(B_{R_1})$.
	\\
	\\
	Step 2. Pigeonholing at level $j=2$. Parameters $r_{i,2}$, $r_{i,1\subset2}$, $H_{i,2}$, $M_{i,2}$, $\bth_{i,2}$. The collection $\B_1$ and the distinguished ball $B_{R_2}^*$.
	\\
	\\
	We repeat the selection process we applied to $B_{R}$ in the previous step. This time we do it for $B_{R_1}^{*}$. Here are the details.
	
	Consider a finitely overlapping cover of $B_{R_1}^{*}$ by balls $B_{R_2}$.
	We select a subcollection $\B_1$ of these balls as follows.
	
	First, we require 
	\begin{equation}
		\label{djc uhruiyruifyypg}
		\int_{B_{R_1}^{*}}|F''_1F''_2|^4\approx \sum_{B_{R_2}\in\B_1}\int_{B_{R_2}}|F''_1F''_2|^4.
	\end{equation}
	
	Second, we assume the existence of two constants denoted by $r_{i,{2}}$ and $r_{i,1\subset 2}$ such that (for each $i\in\{1,2\}$), each $B_{R_2}\in\B_1$ intersects a special collection $\T_{i,B_{R_2}\supset}$
	consisting of
	$$\sim N_{i,1}=r_{i,{2}}r_{i,1\subset 2}$$
	many $T_1\in \T_{i,1}'(B_{R_1})$. Moreover, we require that for each $i$, the directions $\theta_1$ of these $T_1$ are organized as follows. There are $\sim r_{i,2}$ many $\theta_2\in\Theta_{i,2}$, each containing $\sim r_{i,1\subset 2}$ many contributing directions $\theta_1$. Write
	$$F''_{i,B_{R_2}}=\sum_{T_1\in \T_{i,B_{R_2}\supset}}F_{i,T_1}''.$$
	We 
	require that 
	\begin{equation}
		\label{jhfureguriegi09hi[y]bfgbyt hytj uj}
		\int_{B_{R_2}}|F_1''F_2''|^4\approx \int_{B_{R_2}}|F''_{1,B_{R_2}}F''_{2,B_{R_2}}|^4.
	\end{equation}
	
	Third, we can make the selection of $\B_1$ so that there are  parameters $H_{i,2}$, $M_{i,2}$, $\bth_{i,2}$, a collection $\Theta_{i,2}(B_{R_2})\subset\Theta_{i,2}$ and a collection $\T'(\theta_2,B_{R_2})\subset\T(\theta_2,B_{R_2})$ for each $\theta_2\in \Theta_{i,2}(B_{R_2})$  such that 
	$$\#(\Theta_{i,2}(B_{R_2}))\sim\bth_{i,2},\;\forall\;B_{R_2}\in\B_1$$
	$$\#(\T'(\theta_2,B_{R_2}))\sim M_{i,2},\;\forall\;\theta_2\in \Theta_{i,2}(B_{R_2})$$
	$$\|F'_{i,B_{R_2},T_2}\|_{\infty}\sim H_{i,2},\;\forall \;T_2\in\T'(\theta_2,B_{R_2})$$
	\begin{equation}
		\label{djc uhruiyruifyypg1}
		\int_{B_{R_2}}|F_1''F_2''|^4\approx \int_{B_{R_2}}|F_1'''F_2'''|^4
	\end{equation}
	where 
	$$F_i'''(x)= \sum_{T_2\in \T'_{i,2}(B_{R_2})}F''_{i,B_{R_2},T_2}(x),\;x\in B_{R_2}$$
	$$\T'_{i,2}(B_{R_2})=\cup_{\theta_2\in\Theta_{i,2}(B_{R_2})}\T'(\theta_2,B_{R_2}).$$
	As in the previous round of pigeonholing, we get
	$$H_{i,2}\bth_{i,2}^{1/2}M_{i,2}^{1/2}R_2^{3/4}\lessapprox H_{i,1}r_{i,1\subset 2}^{1/2}R_2$$
	via $L^2$ orthogonality, and we get 
	$$H_{i,2}\lessapprox H_{i,1}r_{i,1\subset 2}$$
	via the triangle inequality.

	Write
	$$\lambda_1(B_{R_2})= \frac{\int_{B_{R_2}}|F'''_1F'''_2|^4}{(\sum_{\theta_2\in\Theta_{1,2}(B_{R_2})}\|F'''_{1,\theta_2}\|^2_{L^4(B_{R_2})})^2(\sum_{\theta_2\in\Theta_{2,2}(B_{R_2})}\|F'''_{2,\theta_2}\|^2_{L^4(B_{R_2})})^2}.$$
	Note that due to \eqref{djc uhruiyruifyypg}, \eqref{jhfureguriegi09hi[y]bfgbyt hytj uj} and \eqref{djc uhruiyruifyypg1}
	we have
	$$\lambda_{0}(B_{R_1}^{*})\lessapprox \max_{B_{R_2}\in\B_1}\lambda_1(B_{R_2})\times$$$$\frac{\sum_{B_{R_2}\in\B_1}{(\sum_{\theta_2\in\Theta_{1,2}(B_{R_2})}\|F'''_{1,\theta_2}\|^2_{L^4(B_{R_2})})^2(\sum_{\theta_2\in\Theta_{2,2}(B_{R_2})}\|F'''_{2,\theta_2}\|^2_{L^4(B_{R_2})})^2}}{(\sum_{\theta_1\in\Theta_{1,1}(B_{R_1}^{*})}\|F''_{1,\theta_1}\|^2_{L^4(B_{R_1^{*}})})^2(\sum_{\theta_1\in\Theta_{2,1}(B_{R_1}^{*})}\|F''_{2,\theta_1}\|^2_{L^4(B_{R_1}^{*})})^2}.$$We end Step 2 by choosing $B_{R_2}^{*}\in\B_1$, to maximize $\lambda_{1}(B_{R_2})$.
	\\
	\\
	Summary of pigeonholing: We repeat this process and construct a nested sequence of balls
	$$B_{R_M}^*\subset B_{R_{M-1}}^*\subset\ldots\subset B_{R_1}^*\subset B_{R_0}^*=B_{R}$$
	and collections $\B_j$ consisting of balls $B_{R_{j+1}}$ inside $B_{R_j}^*$. 
	
	We also construct the functions $F_i^{(j)}$ iteratively as follows. First, $F_i^{0}=F_i$. Assuming $F_i^{(j+1)}$ was constructed in Step $j$ as a sum of wave packets 
	\begin{equation}
		\label{rhughtgytygygu9gu90}
		F_{i}^{(j+1)}(x)\approx  \sum_{T_j\in\T'_{i,j}(B_{R_j}^*)}F_{i,T_j}^{(j)}(x),\;x\in B_{R_j^*},
	\end{equation}
	we first construct $F_{i,B_{R_{j+1}}}^{(j+1)}$ as the part of $F_{i}^{(j+1)}$ that only retains the wave packets $F_{i,T_j}^{(j)}$ according to parameters $r_{i,j+1}$, $r_{i,j\subset j+1}$.
	
	We then expand $F_{i,B_{R_{j+1}}}^{(j+1)}$ into wave packets $F_{i,B_{R_{j+1}},T_{j+1}}$ of smaller scale on each $B_{R_{j+1}}\in \B_j$ and only retain the wave packets according to the parameters $H_{i,j+1}$, $M_{i,j+1}$ and $\bth_{i,j+1}$. The result is the function $F_i^{(j+2)}$.
	
	Each $B_{R_{j+1}}\in\B_j$ will intersect 
	\begin{equation}
		\label{cjhhupreyu8gu8tg8tg87g8}
		\sim N_{i,j}=r_{i,j+1}r_{i,j\subset j+1}
	\end{equation}
	special $T_j\in \T'_{i,j}(B_{R_j}^*)$, with directions structured according to the parameters $r_{i,j+1}$, $r_{i,j\subset j+1}$.
	
	We have for each $j\ge 0$
	\begin{equation}
		\label{rh uhruihuuihugyugyy}
		\lambda_{j-1}(B_{R_j}^*)\lessapprox \lambda_j(B_{R_{j+1}}^*)\times L_j
	\end{equation}
	where 
	
	\begin{equation}
		\label{rh uhruihuuihugyugyy1}
		L_j=
	\end{equation}
	$$\frac{\sum_{B_{R_{j+1}}\in\B_j}{(\sum_{\theta_{j+1}\in\Theta_{1,j+1}(B_{R_{j+1}})}\|F^{(j+2)}_{1,\theta_{j+1}}\|^2_{L^4(B_{R_{j+1}})})^2(\sum_{\theta_{j+1}\in\Theta_{2,j+1}(B_{R_{j+1}})}\|F^{(j+2)}_{2,\theta_{j+1}}\|^2_{L^4(B_{R_{j+1}})})^2}}{(\sum_{\theta_j\in\Theta_{1,j}(B_{R_j}^*)}\|F^{(j+1)}_{1,\theta_j}\|^2_{L^4(B_{R_j}^*)})^2(\sum_{\theta_j\in\Theta_{2,j}(B_{R_j}^*)}\|F^{(j+1)}_{2,\theta_j}\|^2_{L^4(B_{R_j^*})})^2}$$
	and
	$$\lambda_j(B_{R_{j+1}}^*)= $$$$\frac{\int_{B^*_{R_{j+1}}}|F^{(j+2)}_1F^{(j+2)}_2|^4}{(\sum_{\theta_{j+1}\in\Theta_{1,j+1}(B^*_{R_{j+1}})}\|F^{(j+2)}_{1,\theta_{j+1}}\|^2_{L^4(B^*_{R_{j+1}})})^2(\sum_{\theta_{j+1}\in\Theta_{2,j+1}(B^*_{R_{j+1}})}\|F^{(j+2)}_{2,\theta_{j+1}}\|^2_{L^4(B^*_{R_{j+1}})})^2}.$$

	We record the essential properties. The following basic estimate is universal, it does not rely on regularity.
	\begin{lemma}[Basic estimate]For each $j\ge -1$
		\begin{equation}
			\label{hhggryfgorueyregyyug}
			\lambda_j(B_{R_{j+1}}^*)\lessapprox \frac{\bth_{1,j+1}\bth_{2,j+1}}{R_{j+1}^2}.
		\end{equation}
	\end{lemma}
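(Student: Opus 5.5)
The plan is to bound $\lambda_j(B^*_{R_{j+1}})$ on the single ball $B=B^*_{R_{j+1}}$ of radius $\rho:=R_{j+1}$. Write $G_i=F^{(j+2)}_i$ and $G_{i,\theta}=F^{(j+2)}_{i,\theta_{j+1}}$. For $j=-1$ read $B=B_R$, $G_i=F_i'$, $\rho=R$. By the pigeonholing, on $B$ each $G_i$ is essentially a sum of wave packets $G_{i,T}$, where $T$ is a $\rho^{1/2}\times\rho$ tube. There are $\sim\bth_{i,j+1}$ directions $\theta$. Each direction carries $\sim M_{i,j+1}$ tubes, and every wave packet has amplitude $\sim H_{i,j+1}$ on its tube and is negligible outside. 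Abbreviate $\bth_i=\bth_{i,j+1}$, $M_i=M_{i,j+1}$, $H_i=H_{i,j+1}$. I will estimate the numerator and the denominator separately in terms of these parameters. The target is a ratio in which all $M_i$ and $H_i$ cancel, leaving $\bth_1\bth_2/\rho^2$.

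\emph{Denominator.} For fixed $\theta$ the tubes in $\T'(\theta,B)$ are parallel and essentially disjoint. Hence
\[
\|G_{i,\theta}\|_{L^4(B)}^4\approx H_i^4M_i\rho^{3/2},
\qquad
\Big(\sum_\theta\|G_{i,\theta}\|^2_{L^4(B)}\Big)^2\approx \bth_i^2H_i^4M_i\rho^{3/2}.
\]

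\emph{Numerator.} I will use
\[
\int_B|G_1G_2|^4\le \|G_1G_2\|_{L^\infty(B)}^2\int_B|G_1G_2|^2 .
\]
For the $L^\infty$ factor, each point lies in at most $O(1)$ tubes of a given direction. The triangle inequality then gives $\|G_i\|_\infty\lesssim \bth_iH_i$, so $\|G_1G_2\|^2_\infty\lesssim \bth_1^2\bth_2^2H_1^2H_2^2$.

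For the $L^2$ factor, I use bilinear $L^2$ (biorthogonality) for transversal pieces of the parabola, since $J_1,J_2$ are separated by $\sim1$. The sums $\theta_1+\theta_2$ are essentially finitely overlapping, so up to localization to $B$,
\[
\int_B|G_1G_2|^2\lesssim \sum_{\theta,\theta'}\int|G_{1,\theta}|^2|G_{2,\theta'}|^2\lessapprox \sum_{T_1,T_2}H_1^2H_2^2|T_1\cap T_2| .
\]
Transversality gives $|T_1\cap T_2|\lesssim\rho$, so this is $\lesssim (\bth_1M_1)(\bth_2M_2)H_1^2H_2^2\rho$.

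Combining the two factors, the numerator is $\lessapprox \bth_1^3\bth_2^3M_1M_2H_1^4H_2^4\rho$. Dividing by the denominator $\bth_1^2\bth_2^2H_1^4H_2^4M_1M_2\rho^{3}$ yields $\bth_1\bth_2/\rho^2$, which is \eqref{hhggryfgorueyregyyug}.

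The only delicate point is the biorthogonality step on a ball rather than on $\R^2$. I would handle it in the standard way: insert a smooth weight adapted to $B$, whose Fourier transform is supported in a $\rho^{-1}$-ball. This fattens $\theta+\theta'$ only by $O(\rho^{-1})$, so the finite overlap persists. Tails of wave packets are absorbed into the $\lessapprox$ notation, as elsewhere in this section. No regularity of $\I_i$ is used, which matches the claim that the estimate is universal.
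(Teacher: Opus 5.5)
Your proposal is correct and is essentially the paper's proof. The paper obtains the basic estimate by interpolating the bilinear $L^4$ (biorthogonality) bound with the trivial $L^\infty$ bound. Your H\"older split $\int_B|G_1G_2|^4\le\|G_1G_2\|_{L^\infty(B)}^2\int_B|G_1G_2|^2$, evaluated through the pigeonholed wave-packet parameters $H_i,M_i,\bth_i$ (which make the functions balanced), is exactly that interpolation carried out by hand, and your bookkeeping correctly yields $\bth_{1,j+1}\bth_{2,j+1}/R_{j+1}^2$.
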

	\begin{proof}
		This follows via interpolation between $L^4$ and $L^\infty$. The proof of a more refined estimate is presented in Section \ref{sec:3.1}.
		
	\end{proof}
	\begin{lemma}
		\label{manipropert}
		For each $j\ge 0$
		\begin{equation}
			\label{ee1}
			\frac{H_{i,j+1}}{H_{i, j}} \leq r_{i, j \subset j+1}, 
		\end{equation}
		\begin{equation}
			\label{ee2}
			(\frac{H_{i,j+1}}{H_{i, j}})^2 \lessapprox R_{j+1}^{1/2}\frac{r_{i, j \subset j+1}}{M_{i,j+1}}
		\end{equation}
		\begin{equation}
			\label{ee3}
			N_{i,j}\le \bth_{i,j},
		\end{equation}
		\begin{equation}
			\label{ee4}
			\bth_{i,j+1}\le r_{i,j+1},
		\end{equation}
		\begin{equation}
			\label{ee5}
			\prod_{j=0}^Mr_{i, j \subset j+1}\le N_{i,0}.
		\end{equation}
		\begin{equation}
			\label{ee6}
			r_{i,j}\lesssim R_j^{\alpha/2}
		\end{equation}\begin{equation}
			\label{ee7}
			\;r_{i,j\subset j+1}\lesssim R_j^{\alpha/4}.
		\end{equation}
	\end{lemma}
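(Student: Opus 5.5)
Each of the seven inequalities follows from the pigeonholing construction together with one of three inputs: the triangle inequality, $L^2$ orthogonality, or the AD-regularity of $\I_{i,j}$. I will treat them in the order listed. The first two are exactly the scale-$j$ versions of \eqref{n re hvhturtgu56i96i0-671} and \eqref{n re hvhturtgu56i96i0-67}, derived in Step 1 for $j=0$ and recorded in Step 2 for $j=1$; for general $j$ the same argument runs on $B^*_{R_j}$.

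\textbf{Proof of \eqref{ee1} and \eqref{ee2}.} For \eqref{ee1}, fix $\theta_{j+1}\in\Theta_{i,j+1}(B_{R_{j+1}})$. By the analogue of \eqref{krjghrtughptriguitruhuh9ui}, the sum of the scale-$(j+1)$ wave packets in direction $\theta_{j+1}$ coincides on $B_{R_{j+1}}$ with $\sum_{\theta_j\subset\theta_{j+1}}F^{(j+1)}_{i,T_j(\theta_j)}$. This is a sum of $\sim r_{i,j\subset j+1}$ terms, each of height $\sim H_{i,j}$. The scale-$(j+1)$ wave packets have essentially disjoint supports, so each of them has sup norm controlled by the sup of this sum. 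The triangle inequality then gives $H_{i,j+1}\lessapprox H_{i,j}r_{i,j\subset j+1}$.

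For \eqref{ee2}, compare $L^2(B_{R_{j+1}})$ norms. The retained wave packets number $\bth_{i,j+1}M_{i,j+1}$, each of height $H_{i,j+1}$, and each occupies area $R_{j+1}^{3/2}$. On the other side, the $T_j$ pieces are orthogonal, and each crosses $B_{R_{j+1}}$ on area $\sim R_{j+1}^2$. This gives
\[
H_{i,j+1}\,\bth_{i,j+1}^{1/2}M_{i,j+1}^{1/2}R_{j+1}^{3/4}\lessapprox H_{i,j}\,(\bth_{i,j+1} r_{i,j\subset j+1})^{1/2}R_{j+1}.
\]
This is the displayed estimate with the $\bth$ factors made explicit. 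It is cleanest to run the comparison for a single $\theta_{j+1}$: there $M_{i,j+1}$ packets sit on the left and $r_{i,j\subset j+1}$ tubes on the right, so the $\bth$ factors disappear. Squaring yields \eqref{ee2}.

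\textbf{Proof of \eqref{ee3}--\eqref{ee5}.} These are counting statements. For \eqref{ee3}: the special tubes $T_j$ meeting $B_{R_{j+1}}$ have distinct directions, all lying in $\Theta_{i,j}(B^*_{R_j})$, so $N_{i,j}\le\bth_{i,j}$. For \eqref{ee4}: $F^{(j+1)}_{i,B_{R_{j+1}}}$ has frequency support in the $r_{i,j+1}$ selected caps $\theta_{j+1}$, so $\Theta_{i,j+1}(B_{R_{j+1}})$ is drawn from at most $r_{i,j+1}$ directions. For \eqref{ee5}: chaining gives $r_{i,j\subset j+1}\le N_{i,j}/r_{i,j+1}$ and $N_{i,j+1}\le\bth_{i,j+1}\le r_{i,j+1}$. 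Hence $r_{i,j\subset j+1}\le N_{i,j}/N_{i,j+1}$, and the product telescopes to at most $N_{i,0}$, up to the $O(1)$ losses hidden in $\sim$.

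\textbf{Proof of \eqref{ee6} and \eqref{ee7}.} These come from AD-regularity. The number of $\theta_j\in\Theta_{i,j}$ is $\#\I_{i,j}\lesssim(R_j^{1/2})^{\alpha}$, which gives \eqref{ee6}. Each $\theta_{j+1}$ contains at most $C_{AD}^2R_{j+1}^{\alpha/2}=C_{AD}^2R_j^{\alpha/4}$ caps $\theta_j$, by \eqref{kdjchpvuryue8urtg9t08h=670980} applied to an interval of length about $R_{j+1}^{-1/2}$ centered at a point of the set; this gives \eqref{ee7}.

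The main obstacle is \eqref{ee2}. It requires making the two-scale identity \eqref{krjghrtughptriguitruhuh9ui} and the $L^2$ orthogonality precise after pruning at scale $j+1$. The counting steps \eqref{ee3}--\eqref{ee5} only need care with the $\sim$ constants.
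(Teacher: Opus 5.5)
Your proposal is correct and follows the paper's proof. \eqref{ee1} and \eqref{ee2} come from the triangle inequality and $L^2$ orthogonality on $B^*_{R_{j+1}}$, exactly as in the $j=0$ computations; \eqref{ee3}--\eqref{ee4} are immediate from the construction; \eqref{ee5} is the same telescoping chain built on $N_{i,j+1}\le \bth_{i,j+1}\le r_{i,j+1}$; and \eqref{ee6}--\eqref{ee7} are AD-regularity. Your per-$\theta_{j+1}$ form of the $L^2$ comparison is the right reading of the paper's $j=0$ display, where $\bth_{i,1}^{1/2}$ appears only on the left, and it yields precisely \eqref{ee2}.
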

	\begin{proof}
		\eqref{ee1} follows from the triangle inequality on  $B_{R_{j+1}}^{*}$. \eqref{ee2} follows from $L^2$ orthogonality on  $B_{R_{j+1}}^{*}$. The derivation of these inequalities has been explained for $j=0$, see  \eqref{n re hvhturtgu56i96i0-67} and \eqref{n re hvhturtgu56i96i0-671}. The case $j\ge 1$ follows via an identical argument. 
		
		\eqref{ee6} and \eqref{ee7} are consequences of $\alpha$-regularity.

		\eqref{ee3} and \eqref{ee4} are trivial, by definition.
		
		To prove \eqref{ee5}, we apply \eqref{ee3} and \eqref{ee4} repeatedly in the form $r_{i, j+1}\ge N_{i,j+1}$ as follows
		\begin{align*}
			N_{i,0}&=r_{i,0\subset1}r_{i,1}      
			\\&\ge r_{i,0\subset1}N_{i,1}
			\\&= (r_{i,0\subset1}r_{i,1\subset2})r_{i,2}
			\\&\ge (r_{i,0\subset1}r_{i,1\subset2})N_{i,2}
			\\&= (r_{i,0\subset1}r_{i,1\subset2}r_{i,2\subset3})r_{i,3}
			\\&\ldots\ldots
			\\&\ge \prod_{j=0}^Mr_{i, j \subset j+1}.
		\end{align*}
	\end{proof}
	We now estimate the growth factor $L_j$ in \eqref
	{rh uhruihuuihugyugyy1}
	$$\frac{\sum_{B_{R_{j+1}}\in\B_j}{(\sum_{\theta_{j+1}\in\Theta_{1,j+1}(B_{R_{j+1}})}\|F^{(j+2)}_{1,\theta_{j+1}}\|^2_{L^4(B_{R_{j+1}})})^2(\sum_{\theta_{j+1}\in\Theta_{2,j+1}(B_{R_{j+1}})}\|F^{(j+2)}_{2,\theta_{j+1}}\|^2_{L^4(B_{R_{j+1}})})^2}}{(\sum_{\theta_j\in\Theta_{1,j}(B_{R_j}^*)}\|F^{(j+1)}_{1,\theta_j}\|^2_{L^4(B_{R_j}^*)})^2(\sum_{\theta_j\in\Theta_{2,j}(B_{R_j}^*)}\|F^{(j+1)}_{2,\theta_j}\|^2_{L^4(B_{R_j^*})})^2}.$$
	We write
	$$\bT_{i,j}=\#\T_{i,j}'(B_{R_j^*})$$
	and recall that 
	$$\bT_{i,j}=M_{i,j}\bth_{i,j}.$$

	\begin{lemma}For each $j\ge 0$
		\begin{equation}
			\label{kjfifvuhvuihrturtut}
			L_j\approx\frac{H_{1,j+1}^4H_{2,j+1}^4\bth_{1,j+1}^2\bth_{2,j+1}^2M_{1,j+1}M_{2,j+1}}{H_{1,j}^4H_{2,j}^4\bth_{1,j}\bth_{2,j} R_{j}^{3/2}}\frac{\#\B_j}{\bT_{1,j}\bT_{2,j}}.
		\end{equation}
		
	\end{lemma}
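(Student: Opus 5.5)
Both the numerator and the denominator of $L_j$ are sums of $L^4$ norms of functions that, after the pigeonholing, are sums of wave packets of essentially constant amplitude. So the plan is to compute each of them up to $\lessapprox$ factors, using the fact that wave packets from one direction have essentially disjoint spatial supports.

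\textbf{Denominator.} Fix $i$ and $\theta_j\in\Theta_{i,j}(B_{R_j}^*)$. On $B_{R_j}^*$, the function $F^{(j+1)}_{i,\theta_j}$ is a sum of $\sim M_{i,j}$ wave packets $F^{(j)}_{i,T_j}$ with $T_j\in\T'(\theta_j,B_{R_j}^*)$. These are parallel $R_j^{1/2}\times R_j$ tubes with essentially disjoint supports, each of amplitude $\sim H_{i,j}$. Hence
$$\|F^{(j+1)}_{i,\theta_j}\|^4_{L^4(B_{R_j}^*)}\approx H_{i,j}^4M_{i,j}R_j^{3/2}.$$
Squaring and summing over the $\sim\bth_{i,j}$ directions gives
$$\Big(\sum_{\theta_j}\|F^{(j+1)}_{i,\theta_j}\|^2_{L^4}\Big)^2\approx \bth_{i,j}^2H_{i,j}^4M_{i,j}R_j^{3/2}=\bth_{i,j}H_{i,j}^4\bT_{i,j}R_j^{3/2}.$$
The product over $i=1,2$ is therefore $\prod_i \bth_{i,j}H_{i,j}^4\bT_{i,j}R_j^{3}$.

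\textbf{Numerator.} The same computation applies on each $B_{R_{j+1}}\in\B_j$ at the next scale. The pigeonholing parameters $\bth_{i,j+1},M_{i,j+1},H_{i,j+1}$ were chosen uniformly over $\B_j$. Each summand is therefore $\approx\prod_i\bth_{i,j+1}^2H_{i,j+1}^4M_{i,j+1}R_{j+1}^{3/2}$, and the sum over balls contributes a factor $\#\B_j$.

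\textbf{Taking the ratio.} Dividing the two expressions, and using $R_{j+1}=R_j^{1/2}$ so that $R_{j+1}^{3}=R_j^{3/2}$, the powers of $R$ combine to $R_j^{3/2}/R_j^{3}=R_j^{-3/2}$. The remaining factors are exactly those in \eqref{kjfifvuhvuihrturtut}.

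\textbf{Main obstacle.} The delicate point is justifying the $L^4$ computation of a single directional piece. It requires that the pigeonholed wave packets within one $\theta$ essentially do not overlap, and that each has $|F_T|\sim H$ on a set of measure $\sim|T|$ rather than being merely bounded by $H$. The first follows from the wave packet decomposition, up to Schwartz tails, which are absorbed into $\approx$. For the second, I would refine the pigeonholing if necessary, or use the $L^2$ normalization of wave packets, to ensure $\|F_T\|_{L^4}^4\approx H^4|T|$. This is the level of rigor at which the paper says it uses $\approx$ casually.

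One more point needs care: $F^{(j+2)}_{i,\theta_{j+1}}$ on $B_{R_{j+1}}$ must be exactly the retained packets of $F^{(j+1)}_{i,B_{R_{j+1}}}$. This holds by construction in the summary of the pigeonholing.
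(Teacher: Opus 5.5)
Your proposal is correct and follows the paper's proof. You compute each directional $L^4$ norm from the $\sim M$ essentially disjoint wave packets of amplitude $\sim H$ (the paper cites Exercise 2.7(c) of \cite{demeter2020fourier} for this). This gives $H_{1,j}^4H_{2,j}^4\bth_{1,j}\bth_{2,j}\bT_{1,j}\bT_{2,j}R_j^3$ for the denominator, and, using that the pigeonholed parameters are uniform over $\B_j$, $\#\B_j\prod_{i}H_{i,j+1}^4\bth_{i,j+1}^2M_{i,j+1}R_{j+1}^{3/2}$ for the numerator, after which $R_{j+1}^3=R_j^{3/2}$ finishes the computation. The only blemish is notational: in the denominator the factor $R_j^{3}$ sits outside the product over $i$ while in the numerator $R_{j+1}^{3/2}$ sits inside it, but your final power count is right.
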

	\begin{proof}For the denominator of $L_j$  \eqref{rhughtgytygygu9gu90} implies (see e.g.  Exercise 2.7 (c) in \cite{demeter2020fourier})
		$${(\sum_{\theta_j\in\Theta_{1,j}(B_{R_j}^*)}\|F^{(j+1)}_{1,\theta_j}\|^2_{L^4(B_{R_j}^*)})^2(\sum_{\theta_j\in\Theta_{2,j}(B_{R_j}^*)}\|F^{(j+1)}_{2,\theta_j}\|^2_{L^4(B_{R_j^*})})^2}
		$$$$
		\approx H_{1,j}^4H_{2,j}^4\bth_{1,j}^2\bth_{2,j}^2 M_{1,j}M_{2,j} |T_{j}|^2
		$$
		$$=H_{1,j}^4H_{2,j}^4\bth_{1,j}\bth_{2,j} \bT_{1,j}\bT_{2,j}R_j^3.$$	
		For the numerator we similarly have
		\begin{align*}
			(\sum_{\theta_{j+1}\in\Theta_{i,j+1}(B_{R_{j+1}})}\|F^{(j+2)}_{i,\theta_{j+1}}\|^2_{L^4(B_{R_{j+1}})})^2&\approx H_{i,j+1}^4\bth_{i,j+1}^2M_{i,j+1}|T_{j+1}|\\&=H_{i,j+1}^4\bth_{i,j+1}^2M_{i,j+1}R_{j+1}^{3/2}.
		\end{align*}

	\end{proof}

	\subsection{The first estimate via bilinear Kakeya}
	\label{sec:3.1}
	The purpose of this subsection is two-fold. It serves as a warm-up for the more difficult computations in the next section, and it illustrates the limitation of the bilinear Kakeya inequality by reproving the basic estimate \eqref{hhggryfgorueyregyyug}.

	We recall the well-known bilinear Kakeya bound 
	
	\begin{equation}
		\label{bilKbd}
		\#\B_j\lesssim \frac{\bT_{1,j}\bT_{2,j}}{N_{1,j}N_{2,j}}. 
	\end{equation}
	This follows by recalling that for each $j\ge 0$ and $i\in\{1,2\}$, each $B_{R_{j+1}}\in\B_j$
	intersects at least $N_{i,j}$ of the tubes in $\T_{i,j}'(B_{R_j^*})$.
	
	Using this and \eqref{kjfifvuhvuihrturtut} we find
	$$L_j\lessapprox\frac{H_{1,j+1}^4H_{2,j+1}^4\bth_{1,j+1}^2\bth_{2,j+1}^2M_{1,j+1}M_{2,j+1}}{H_{1,j}^4H_{2,j}^4\bth_{1,j}\bth_{2,j} R_{j}^{3/2}}.$$
	We combine this with the upper bound \eqref{ee2} for $M_{i,j+1}$ to get
	\begin{align}
		L_{j} &\lessapprox  \frac{r_{1,j\subset j+1}r_{2,j\subset j+1}\bth_{1,j+1}^2\bth_{2,j+1}^2H_{1,j+1}^2H_{2,j+1}^2}{N_{1,j}N_{2,j}\bth_{1,j}\bth_{2,j} H_{1,j}^2H_{2,j}^2 R_j}\nonumber\\&\lessapprox \frac{r_{1,j\subset j+1}r_{2,j\subset j+1}r_{1,j+1}^2r_{2,j+1}^2H_{1,j+1}^2H_{2,j+1}^2}{N_{1,j}N_{2,j}\bth_{1,j}\bth_{2,j} H_{1,j}^2H_{2,j}^2 R_j},\,\,\text{by }\eqref{ee4}\nonumber\\&=\frac{r_{1,j+1}r_{2,j+1}H_{1,j+1}^2H_{2,j+1}^2}{\bth_{1,j}\bth_{2,j} H_{1,j}^2H_{2,j}^2 R_j}, \;\;\text{by }\eqref{cjhhupreyu8gu8tg8tg87g8}\label{55new} .
	\end{align}
	By applying \eqref{55new} for $0\le j\le M$ we conclude,  since $R^{\frac{1}{2^M}}\sim 1$, 
	\begin{align*}
		\lambda_{-1} &  \lessapprox (r_{1,1}\cdots r_{1,M+1})(r_{2,1}\cdots r_{2,M+1}) \left( \frac{H_{1,1}}{H_{1,0}}\cdots \frac{H_{1,M+1}}{H_{1,M}} \right)^2\left( \frac{H_{2,1}}{H_{2,0}}\cdots \frac{H_{2,M+1}}{H_{2,M}} \right)^2 \nonumber \\
		&  \times \frac{1}{(\bth_{1,0}\cdots\bth_{1,M})(\bth_{2,0}\cdots\bth_{2,M})} \frac{1}{R^{1+\frac12+\ldots+\frac1{2^M}}}
		\nonumber \\&\lesssim \prod_{j=0}^Mr_{1,j+1}\prod_{j=0}^Mr_{2,j+1}\prod_{j=0}^M(r_{1,j\subset j+1})^2\prod_{j=0}^M(r_{2,j\subset j+1})^2\frac{1}{\prod_{j=0}^M\bth_{1,j}\prod_{j=0}^M\bth_{2,j}}\frac1{R^2},\;\;\text{by }\eqref{ee1}\nonumber\\&=\prod_{j=0}^Mr_{1,j\subset j+1}\prod_{j=0}^Mr_{2,j\subset j+1}\frac{\prod_{j=0}^MN_{1,j}\prod_{j=0}^MN_{2,j}}{\prod_{j=0}^M\bth_{1,j}\prod_{j=0}^M\bth_{2,j}}\frac1{R^2}\nonumber\\
		& \lesssim \frac{N_{1,0}N_{2,0}}{R^{2}},\;\;\text{by }\eqref{ee3}\text{ and }\eqref{ee5}.
	\end{align*}

	\subsection{ Szemer\'{e}di--Trotter-type estimate}
	We establish the following non-concentration result. We point out that this is only valid for $j\ge 1$. No meaningful non-concentration can be derived from our assumptions when $j=0$.
	\begin{proposition}
		\label{Fuesst}
		Let $j\ge 1$ and $B=B_{R_j}\in\B_{j-1}$.  Let $1\le s\le R_j^{1/2}$.
		Fix $i\in\{1,2\}$ and fix a contributing direction $\theta_j\in\Theta_{i,j}(B)$. Let $T_s\subset 100B$ be some $sR_j^{1/2}\times R_{j}$-rectangle  with the same long direction as the $R_j^{1/2}\times R_j$-rectangles $T_j\in \T'(\theta_j,B)$. Then
		$$\#\{ T_j \in \T'(\theta_j,B): T_{j} \subset T_s \} \lessapprox s^{1-\alpha} \left( \frac{H_{i,{j-1}}}{H_{i,j}} \right)^2 R_j^\alpha.$$
	\end{proposition}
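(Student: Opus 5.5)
The plan is to compare $L^2$ norms on the fat rectangle $T_s$, as in the derivation of \eqref{n re hvhturtgu56i96i0-67}, but to exploit AD-regularity at the intermediate frequency scale $(sR_j^{1/2})^{-1}$ dual to the width of $T_s$.

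\textbf{Step 1: the function on $B$ in direction $\theta_j$.} On $B\in\B_{j-1}$, by the analogue of \eqref{krjghrtughptriguitruhuh9ui} at level $j$, the $\theta_j$-part of $F^{(j)}_{i,B}$ is
$$G:=\sum_{T_j\in\T(\theta_j,B)}F^{(j)}_{i,B,T_j}\approx\sum_{\theta_{j-1}\subset\theta_j}F^{(j-1)}_{i,T_{j-1}(\theta_{j-1})}.$$
There are $\sim r_{i,j-1\subset j}$ terms on the right, each of size $\lesssim H_{i,j-1}$. Since $R_{j-1}^{1/2}=R_j$, each $T_{j-1}$ has width $R_j$, so on $B$ each term behaves like a modulated plane wave of amplitude $\sim H_{i,j-1}$, with frequency in a cap $\theta_{j-1}$ of length $R_j^{-1}$.

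\textbf{Step 2: lower bound on $\|G\|_{L^2(T_s)}$.} The wave packets $F^{(j)}_{i,B,T_j}$ with $T_j\subset T_s$ have essentially disjoint supports, amplitude $\sim H_{i,j}$, and $|T_j|=R_j^{3/2}$. Hence
$$\#\{T_j\in\T'(\theta_j,B):T_j\subset T_s\}\,H_{i,j}^2R_j^{3/2}\lesssim\Big\|\sum_{T_j\subset T_s}F^{(j)}_{i,B,T_j}\Big\|^2_{L^2}\lesssim\|G\|^2_{L^2(w_{T_s})}.$$
The last inequality follows from local $L^2$ orthogonality of wave packets (a projection bound), with $w_{T_s}$ a weight adapted to $T_s$.

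\textbf{Step 3: upper bound on $\|G\|_{L^2(w_{T_s})}$ (the main step).} Partition the $R_j^{-1/2}$-arc underlying $\theta_j$ into $\sim s$ intervals $\omega$ of length $(sR_j^{1/2})^{-1}$. Multiplying by $w_{T_s}$ blurs frequencies in the direction normal to $\theta_j$ only at scale $(sR_j^{1/2})^{-1}$. Therefore the pieces $G_\omega=\sum_{\theta_{j-1}\subset\omega}F^{(j-1)}_{i,T_{j-1}}$ are almost orthogonal on $w_{T_s}$. Inside each $\omega$ I give up orthogonality and use Cauchy--Schwarz. Writing $k_\omega$ for the number of contributing $\theta_{j-1}\subset\omega$, this gives
$$\|G\|^2_{L^2(w_{T_s})}\lesssim\sum_\omega k_\omega^2H_{i,j-1}^2|T_s|\le\Big(\max_\omega k_\omega\Big)\Big(\sum_\omega k_\omega\Big)H_{i,j-1}^2\,sR_j^{3/2}.$$
AD-regularity of $\I_{i,j-1}$ (intervals of length $R_j^{-1}$) applied to $\omega$ gives $\max_\omega k_\omega\lesssim(R_j/(sR_j^{1/2}))^\alpha=(R_j^{1/2}/s)^\alpha$. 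Also $\sum_\omega k_\omega=r_{i,j-1\subset j}\lesssim R_j^{\alpha/2}$ by \eqref{ee7}, since $R_{j-1}^{\alpha/4}=R_j^{\alpha/2}$.

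\textbf{Step 4: conclusion.} Combining the bounds,
$$\#\{T_j\subset T_s\}\,H_{i,j}^2R_j^{3/2}\lesssim s^{-\alpha}R_j^{\alpha}H_{i,j-1}^2\,sR_j^{3/2},$$
which is the claim.

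The main obstacle is Step 3. It requires justifying the almost orthogonality of the $G_\omega$ on $T_s$: Schwartz tails must be handled, $T_s$ may sit near the edge of $100B$, and the $T_{j-1}$ are only approximately plane waves on $B$. All of this costs at most $\lessapprox1$. The restriction $j\ge1$ enters exactly here: we need the coarser scale $R_{j-1}$ with $R_{j-1}^{1/2}=R_j$, so that AD-regularity of $\I_{i,j-1}$ applies at the scales between $R_j^{-1}$ and $R_j^{-1/2}$.
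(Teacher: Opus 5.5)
Your proposal is correct and follows the paper's proof: the same lower bound from spatial orthogonality of the $T_j\subset T_s$, the same passage to all of $\T(\theta_j,B)$ via the two-scale identity, the same splitting of $\theta_j$ into $\sim s$ pieces of length $(sR_j^{1/2})^{-1}$ that remain orthogonal on $T_s$, and the triangle inequality inside each piece. The only difference is cosmetic. You bound $\sum_\omega k_\omega^2$ by $(\max_\omega k_\omega)\, r_{i,j-1\subset j}$ using \eqref{ee7}, whereas the paper uses $(\max_\omega k_\omega)^2$ times the number $\lesssim s^\alpha$ of pieces, a count that needs the lower AD bound; both give the same $s^{-\alpha}R_j^{\alpha}$.
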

	\begin{proof}
		Recall that we have the function
		$$F_i^{(j+1)}(x)=\sum_{T_j\in\T'_{i,j}(B)}F^{(j)}_{i,B,T_j}(x),\;\;x\in B,$$
		with $$\|F^{(j)}_{i,B,T_j}\|_{\infty}\sim H_{i,j}$$
		$$\T'_{i,j}(B)=\cup_{\theta_j\in\Theta_{i,j}(B)}\T'(\theta_j,B),$$
		$$\#\T'_{i,j}(B)\sim \bT_{i,j}.$$

		We prove two estimates for the quantity
		\[ \int_{T_s} |\sum_{T_j\in\T'(\theta_j,B)}F^{(j)}_{i,B,T_j}|^2. \]
		On the one hand, we have due to spatial orthogonality
		\begin{equation}
			\label{ck jhvufhvuigu t8guuh9uh}
			\int_{T_s} |\sum_{T_j\in\T'(\theta_j,B)}F^{(j)}_{i,B,T_j}|^2 \approx 
			(\#\{ T_j \in \T'(\theta_j,B): T_j\subset T_s \}) |T_{j}| H_{i,j}^2.
		\end{equation}
		We now seek another estimate. Due to spatial orthogonality and the fact that $\T'(\theta_j,B)\subset\T(\theta_j,B)$ we have
		$$\int_{T_s}|\sum_{T_j\in\T'(\theta_j,B)}F^{(j)}_{i,B,T_j}|^2\lesssim \int_{T_s}|\sum_{T_j\in\T(\theta_j,B)}F^{(j)}_{i,B,T_j}|^2.$$
		We recall the two-scale equivalence
		$$\sum_{T_j\in\T(\theta_j,B)}F^{(j)}_{i,B,T_j}\approx F^{(j)}_{i,B,\theta_{j}}=\sum_{\theta_{j-1}\subset\theta_j}F^{(j)}_{i,B,\theta_{j-1}}\text{ on }B.$$
		While the functions $F^{(j)}_{i,B,\theta_{j-1}}$ are orthogonal on $B$, this property does not persist on the smaller $T_s$. Instead, we split $\theta_j$ into boxes $\theta_{j,s}$ of length $s^{-1}R_{j}^{-1/2}$, and note that the functions 
		$$\sum_{\theta_{j-1}\subset \theta_{j,s}}  F_{i,B,\theta_{j-1}}^{(j)}$$
		are pairwise orthogonal on $T_s$ for distinct $\theta_{j,s}$,

		\[ \int_{T_s}|\sum_{\theta_{j-1}\subset\theta_j}F^{(j)}_{i,B,\theta_{j-1}}
		|^2\lessapprox \sum_{\theta_{j,s}\subset \theta_j}\int_{T_s} |\sum_{\theta_{j-1}\subset \theta_{j,s}}  F^{(j)}_{i,B,\theta_{j-1}}|^2 .
		\] 
		For each $\theta_{j-1}$, $F^{(j)}_{i,B,\theta_{j-1}}$ contributes with only one wave packet $F^{(j)}_{i,B,T_{j-1}}$ on $B$, and its magnitude is $\sim H_{i,j-1}$. We note that by the triangle inequality and $\alpha$-dimensionality we have
		\[ 
		|\sum_{\theta_{j-1}\subset \theta_{j,s}}  F^{(j)}_{i,B,\theta_{j-1}}| \lesssim 
		H_{i,{j-1}} \left( \frac{s^{-1}R_j^{-1/2}}{R_j^{-1}} \right)^{\alpha},
		\]
		and 
		\[ 
		\#\{ \theta_{j,s} :\, \theta_{j,s} \subset \theta_j \} \lesssim \left( \frac{R_j^{-1/2}}{s^{-1}R_j^{-1/2}}\right)^\alpha=s^\alpha.
		\]
		These lead to the second estimate 
		\begin{equation}
			\label{ck jhvufhvuigu t8guuh9uh1}
			\int_{T_s}|\sum_{T_j\in\T'(\theta_j,B)}F^{(j)}_{i,B,T_j}|^2\lessapprox |T_s|H_{i,{j-1}}^2\left( \frac{s^{-1}R_j^{-1/2}}{R_j^{-1}} \right)^{2\alpha}s^\alpha.
		\end{equation}
		Combining \eqref{ck jhvufhvuigu t8guuh9uh} and \eqref{ck jhvufhvuigu t8guuh9uh1} we find
		\[
		\#\{ T_j \in \T'(\theta_j,B): T_{j} \subset T_s \} \lessapprox s^{1-\alpha} \left( \frac{H_{i,{j-1}}}{H_{i,j}} \right)^2 R_j^\alpha.
		\]
	\end{proof}
	\begin{proposition}If $j\ge 1$ and $\alpha\le \frac12$
		\begin{equation}\label{53}
			\#\B_j \lessapprox 
			\frac{(\bT_{1,j}\bT_{2,j})^{1/2} \left( \frac{H_{1,{j-1}}}{H_{1,j}} \right)^2\left( \frac{H_{2,{j-1}}}{H_{2,j}} \right)^2 R_j^{2\alpha+\frac12} }{(N_{1,j}N_{2,j})^{3/2}}.
		\end{equation}
		If $\alpha>\frac12$
		\begin{equation}
			\label{ST-large-alpha}\#\B_j \lessapprox 
			\frac{(\bT_{1,j}\bT_{2,j})^{1/2} \left( \frac{H_{1,{j-1}}}{H_{1,j}} \right)^{1/\alpha}\left( \frac{H_{2,{j-1}}}{H_{2,j}} \right)^{1/\alpha} R_j^{3/2} }{(N_{1,j}N_{2,j})^{\frac{\alpha+1}{2\alpha}}}.
		\end{equation}
	\end{proposition}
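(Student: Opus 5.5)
The plan is to rescale the configuration inside $B=B^*_{R_j}$ and apply the Szemer\'{e}di--Trotter-type incidence bound for well-spaced tubes: \cite{DemWang} when $\alpha\le\frac12$, \cite{doprod} when $\alpha>\frac12$. The objects are as follows. For each $i$ we have the family of tubes $\T'_{i,j}(B)$, with $\#\T'_{i,j}(B)\sim\bT_{i,j}$. These are $R_j^{1/2}\times R_j$ tubes inside $B$. Each ball $B_{R_{j+1}}\in\B_j$ has radius $R_{j+1}=R_j^{1/2}$, so it is a "square" at the scale of the tube width. It is an $N_{i,j}$-rich square for family $i$, meaning it meets $\gtrsim N_{i,j}$ tubes of family $i$. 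After rescaling $B$ to the unit ball, the tubes become $\delta\times 1$ tubes and the balls become $\delta$-balls, with $\delta=R_j^{-1/2}$.

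The first step is to record the two non-concentration hypotheses needed by the incidence theorem.

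\emph{Direction non-concentration.} The directions of tubes in $\T'_{i,j}(B)$ lie in $\Theta_{i,j}(B)$. By AD-regularity of $\I_{i,j}$, an arc of length $\rho\ge R_j^{-1/2}$ contains $\lesssim(\rho R_j^{1/2})^{\alpha}$ directions. This gives an $\alpha$-dimensional direction set.

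\emph{Spatial non-concentration within each direction.} This is Proposition \ref{Fuesst}. For a fixed $\theta_j$, the parallel tubes in an $sR_j^{1/2}$-wide slab number at most $s^{1-\alpha}(H_{i,j-1}/H_{i,j})^2R_j^{\alpha}$. Written as $s^{1-\alpha}\cdot K_i$ with $K_i=(H_{i,j-1}/H_{i,j})^2R_j^{\alpha}$, each direction class is a union of at most $K_i$ "Frostman-type" $(1-\alpha)$-dimensional families, or equivalently a $(1-\alpha)$-dimensional family with constant $K_i$. This is exactly where $j\ge1$ is needed.

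The second step is to invoke the bilinear (two-family, transversal) version of the incidence estimate. For $\alpha\le\frac12$, the tubes of each family form (after the rescaling) a $(\delta,\alpha,1-\alpha)$-configuration, i.e. a product-like $1$-dimensional set of lines. The \cite{DemWang} bound gives, for $r$-rich $\delta$-balls, $\#\{\text{$r$-rich}\}\lessapprox \frac{\#\T\cdot K\,\delta^{-1}}{r^{3}}$-type estimates. In bilinear form I expect it to read
$$\#\B_j\lessapprox \frac{(\bT_{1,j}\bT_{2,j})^{1/2}K_1K_2\,\delta^{-1}}{(N_{1,j}N_{2,j})^{3/2}}.$$
This is obtained by applying the linear estimate to each family and taking the geometric mean, since $\#\B_j\le\min_i\#\{N_{i,j}\text{-rich balls for family }i\}$. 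Substituting $K_i$ and $\delta^{-1}=R_j^{1/2}$ yields \eqref{53}. The main care needed is in the precise normalization of the multiplicity $K_i$ and the $R_j^{\alpha}$ factors. I will also need to split tubes into $\lessapprox K_i$ subfamilies, or use the version of the theorem allowing a constant in the non-concentration. The price of this is absorbed linearly in $K_i$ as long as the incidence bound is linear in the Frostman constant. This is the step I expect to be the main obstacle: checking that the cited theorem's dependence on the non-concentration constants is exactly $K$ (linear). If it is not, one can instead partition into subfamilies and sum, using $\#\T$ scaling.

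For $\alpha>\frac12$, the same setup feeds into the sharp bound from \cite{doprod}. There the rich-point exponent becomes $r^{-(\alpha+1)/\alpha}$ and the constant dependence becomes $K^{1/(2\alpha)}$. Relative to $(H_{i,j-1}/H_{i,j})^2R_j^\alpha$, this produces $(H_{i,j-1}/H_{i,j})^{1/\alpha}$ together with the powers of $R_j$ combining to $R_j^{3/2}$ after taking geometric means over $i=1,2$. This yields \eqref{ST-large-alpha}. Both cases finish with routine bookkeeping of exponents.
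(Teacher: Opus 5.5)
Your setup is the paper's proof. You rescale $B^*_{R_j}$ by $R_j$ so that $\delta=R_j^{-1/2}$, and AD-regularity makes the directions a $(\delta,\alpha)$-Katz--Tao set. Proposition \ref{Fuesst} makes the tubes in each direction a $(\delta,1-\alpha,K_i)$-Katz--Tao set with $K_i\lessapprox (H_{i,j-1}/H_{i,j})^2R_j^{\alpha}$. You then apply \cite{DemWang} (resp.\ \cite{doprod}) to each family with richness $N_{i,j}$ and take the geometric mean. Your displayed estimate with $K_1K_2\,\delta^{-1}$ is also the correct target. The flaw is in the step connecting these, which you flag as the main obstacle and then resolve the wrong way.

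You state the single-family bound as $\#\T\,K\delta^{-1}/r^3$, linear in $K$. Its geometric mean over $i=1,2$ gives $(K_1K_2)^{1/2}$, not the $K_1K_2$ in your display, so your stated derivation does not produce your own display. Moreover, no bound linear in $K$ can hold. Take $\delta^{-\alpha}$ AD-regular directions and, in each, all $\delta^{-1}$ parallel tubes covering the unit square. Then $K\sim\delta^{-\alpha}$, $\#\T\sim\delta^{-1-\alpha}$, and all $\sim\delta^{-2}$ of the $\delta$-squares are $r\sim\delta^{-\alpha}$-rich. But $K\,\#\T\,\delta^{-1}r^{-3}\sim\delta^{-2+\alpha}$. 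What the paper extracts from Theorem 1.2 of \cite{DemWang} is quadratic in the spatial constant:
\[
\#\B_j\lessapprox \frac{K_i^2\,\bT_{i,j}\,R_j^{1/2}}{N_{i,j}^3},\qquad i\in\{1,2\}.
\]
The example above saturates this bound, and averaging over $i$ gives exactly \eqref{53}.

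Your fallback does not rescue the argument. Splitting into $\sim K$ subfamilies with constant $O(1)$ and pigeonholing richness $\gtrsim r/K$ in one of them costs
\[
K\cdot\frac{(\#\T/K)\,\delta^{-1}}{(r/K)^3}=K^3\,\frac{\#\T\,\delta^{-1}}{r^3}.
\]
After averaging this produces $(H_{1,j-1}/H_{1,j})^3(H_{2,j-1}/H_{2,j})^3R_j^{3\alpha+\frac12}$, which is strictly weaker than \eqref{53}.

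The same issue affects the case $\alpha>\frac12$. There the single-family input from \cite{doprod} must be $\bT_{i,j}\,K_i^{1/\alpha}R_j^{1/2}N_{i,j}^{-(\alpha+1)/\alpha}$. Your ``$K^{1/(2\alpha)}$'' is correct only as the per-family exponent after the geometric mean. With these dependences stated correctly, the remaining bookkeeping goes through as you describe.
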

	\begin{proof}For \eqref{53} we use Theorem 1.2 in \cite{DemWang}. The collection $\T'_{i,j}(B)$ may be rescaled by $R_j$ to consist of $\delta\times 1$ rectangles, with $\delta=R_j^{-1/2}$. The directions $\theta_j$ form a $(\delta,\alpha, C_{AD})$-Katz-Tao set, where $C_{AD}$ is the AD-regularity  constant of our original direction set. Also, for each direction, the set of rectangles is a $(\delta,1-\alpha,K_2)$-Katz Tao set. Proposition \ref{Fuesst} implies that 
		$$K_2\lessapprox \left( \frac{H_{i,{j-1}}}{H_{i,j}} \right)^2 R_j^\alpha.$$   
		Recall that each $B_{R_{j+1}}$ intersects at least $N_{i,j}$ many $T_j\in\T'_{i,j}(B)$. Theorem 1.2 in \cite{DemWang} now implies that 
		$$\#\B_j \lessapprox \frac{(K_2)^2\bT_{i,j}R_j^{1/2}}{(N_{i,j})^3}\lessapprox
		\frac{\bT_{i,j} \left( \frac{H_{i,{j-1}}}{H_{i,j}} \right)^4 R_j^{2\alpha+\frac12} }{(N_{i,j})^{3}}.$$Finally, \eqref{53} follows by taking the geometric average for $i\in\{1,2\}$.
		
		\eqref{ST-large-alpha} follows similarly from the analogous result in \cite{doprod}.
		
	\end{proof}
	\section{Proof of Theorem \ref{mmmain3}, the case $\alpha\le \frac12$}
	\label{sec:4}
	Theorem \ref{mmmain3}, in its equivalent local form, will be a consequence of the following result.
	\begin{theorem}
		We have
		$$\lambda_{-1}(B_R)\lessapprox R^{\frac{4\alpha}{5}-2}.$$
	\end{theorem}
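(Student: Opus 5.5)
The plan is to telescope \eqref{rh uhruihuuihugyugyy}, but to stop at a variable depth $K$ and to use a different bound for $\#\B_j$ at different levels. Iterating \eqref{rh uhruihuuihugyugyy} for $0\le j\le K$ gives
$$\lambda_{-1}(B_R)\lessapprox \lambda_K(B^*_{R_{K+1}})\prod_{j=0}^{K}L_j .$$
The three ingredients are as follows.
\begin{enumerate}
\item The terminal factor $\lambda_K$ is controlled by the basic estimate \eqref{hhggryfgorueyregyyug}, namely $\bth_{1,K+1}\bth_{2,K+1}R_{K+1}^{-2}$.
\item The factor $L_0$ is controlled by the bilinear Kakeya computation \eqref{55new}. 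No non-concentration is available at $j=0$, so this is the only tool there.
\item For $1\le j\le K$ we insert the Szemer\'edi--Trotter bound \eqref{53} for $\#\B_j$ into the formula \eqref{kjfifvuhvuihrturtut} for $L_j$. This replaces the Kakeya factor $(N_{1,j}N_{2,j})^{-1}$ by
$$(\bT_{1,j}\bT_{2,j})^{-1/2}(N_{1,j}N_{2,j})^{-3/2}R_j^{2\alpha+1/2}\prod_i(H_{i,j-1}/H_{i,j})^2 .$$
\end{enumerate}
We then eliminate $M_{i,j+1}$ using \eqref{ee2}. Next we write $\bT_{i,j}=M_{i,j}\bth_{i,j}$ and bound the resulting $M_{i,j}^{-1/2}$ from below-by-above again through \eqref{ee2} at the previous level. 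Finally we use \eqref{ee3}, \eqref{ee4} and \eqref{cjhhupreyu8gu8tg8tg87g8} to express everything in terms of the $r_{i,j}$, the $r_{i,j\subset j+1}$, the ratios $H_{i,j+1}/H_{i,j}$ and the $\bth_{i,j}$.

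The result is a family of upper bounds for $\lambda_{-1}$, one for each $K\in\{-1,0,1,2,\dots\}$; the choice $K=-1$ is just the basic estimate $R^{\alpha-2}$. Each of these is a monomial in the free parameters. Passing to logarithms base $R$, every parameter becomes a real variable, and the constraints become linear inequalities:
\begin{itemize}
\item \eqref{ee1} and \eqref{ee2} for the $H$-ratios;
\item \eqref{ee3}--\eqref{ee5} for the counting parameters;
\item \eqref{ee6}, \eqref{ee7} coming from $\alpha$-regularity, namely $r_{i,j}\lesssim R_j^{\alpha/2}$ and $r_{i,j\subset j+1}\lesssim R_j^{\alpha/4}$.
\end{itemize}
We then take a weighted geometric mean of several of these bounds (say $K=-1,0,1,2$, possibly more), with weights chosen so that the exponents of all free parameters become nonpositive once the constraints are applied. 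What remains is a pure power of $R$. The target is to choose the weights so that this power is $\frac{4\alpha}{5}-2$. By the symmetry between $i=1$ and $i=2$, it suffices to treat a single $i$ and double.

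The key mechanism is the following. The Kakeya-only argument of Section \ref{sec:3.1} loses precisely because the ratios $H_{i,j+1}/H_{i,j}$ may be as large as $r_{i,j\subset j+1}$. The Szemer\'edi--Trotter gain at level $j$ comes with the penalty factor $(H_{i,j-1}/H_{i,j})^2$. This penalty is large exactly when the $H$-ratio at the previous level is small, and that is the regime in which the terminal basic estimate or the Kakeya bound at that earlier level is already good. So the dichotomy between large and small $H$-growth is resolved by the geometric averaging.

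The main obstacle is this optimization. The $H$-ratios enter with opposite signs in consecutive factors $L_{j-1}$ and $L_j$. In addition, the powers of $R_j=R^{2^{-j}}$ decay geometrically, so the contribution of deep levels must be estimated as a tail rather than term by term. I would first carry out the computation with $K\in\{-1,0,1\}$ only. This should give the weaker exponent recorded in the acknowledgements, $\lambda_{-1}\lessapprox R^{\frac{5\alpha}{12}\cdot 2-2}$, corresponding to $\Gamma_{8,\alpha}=\frac{5\alpha}{48}-\frac14$. I would then add $K=2,3,\dots$ and solve the resulting linear program, checking that the optimal weights stabilize and give the exponent $\frac{4\alpha}{5}-2$. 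The restriction $\alpha\le\frac12$ enters only through the use of \eqref{53}.
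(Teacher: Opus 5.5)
Your architecture is the paper's: telescope $\lambda_{-1}\lessapprox L_0\cdots L_k\lambda_k(B^*_{R_{k+1}})$, use bilinear Kakeya for $L_0$, \eqref{53} for $L_j$ with $j\ge1$, the basic estimate at the bottom, and take a convex combination over stopping depths, viewed as a linear program in the exponents. There are two gaps.

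\textbf{The order of elimination fails.} You bound $L_0$ by \eqref{55new} and remove $M_{i,j+1}$ from each $L_j$ via \eqref{ee2} before multiplying. This leaves the factor $\bT_{i,j}^{-1/2}=(M_{i,j}\bth_{i,j})^{-1/2}$ that \eqref{53} puts into $L_j$. Since \eqref{ee2} is an upper bound for $M_{i,j}$, it cannot bound $M_{i,j}^{-1/2}$ from above. The only option left in Lemma \ref{manipropert} is $M_{i,j}\ge 1$, i.e.\ discarding $-\frac12 m_j$, and that destroys the gain.

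To see this, take $\tau_0=n_0=\alpha$, $\tau_j=a_j=n_j=\alpha q_j$ for $j\ge1$, $c_0=h_0=\frac\alpha2$, $c_j=h_j=\alpha q_{j+1}$ for $j\ge1$, and $m_j=(1-\alpha)q_j$. This point satisfies \eqref{struct-exp-ineq} and \eqref{MH-exp-ineq}. There the basic bound and the depth-$0$ bound both equal $\alpha-2$. The depth-$k$ bound with $-\frac12 m_j$ discarded equals $\alpha-2+(1-2^{-k})(\frac12-\alpha)\ge\alpha-2$ for $\alpha\le\frac12$. Hence no weighting of your bounds beats the trivial $R^{\alpha-2}$.

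The fix is to multiply first. In $\ell_0+\cdots+\ell_k$, the $+m_j$ from $L_{j-1}$ and the $-\frac12 m_j$ from $L_j$ combine into $+\frac12 m_j$. Likewise $2\tau_j-\frac32\tau_j=\frac12\tau_j$. This gives \eqref{raw-all-Lj-exp}, and only then should you apply $m_{j+1}+2h_j\le q_{j+1}+c_j$. At the same point, \eqref{raw-all-Lj-exp} gives $(\frac12+2^{-k-1})\alpha-2$, which is strictly better.

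\textbf{The optimization itself is missing.} Even after this repair, the step that produces $\frac45\alpha$ is left as ``solve the LP and check that the weights stabilize.'' The statement is exactly the value of that optimization, so the proof must exhibit the certificate. The paper's certificate runs as follows.
\begin{enumerate}
\item Put weight $\frac13$ on $b\le\tau_0-2$, weight $\frac{4}{3^{k+1}}$ on \eqref{raw-all-Lj-exp} for $1\le k<K$, and weight $\frac{2}{3^K}$ on $k=K$. Nothing goes on depth $0$. This yields $b\le\frac45(1-6^{-K})\alpha-2+V_K$.
\item Use $\tau_0\ge n_0=a_1+c_0$, $\tau_j\le a_j$ and $n_j=a_{j+1}+c_j$. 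Group the $m,h$ terms into multiples of $m_{j+1}+2h_j$ plus multiples of $h_j$, and bound these by \eqref{MH-exp-ineq} and $h_j\le c_j$. All constants and the $c_0$ terms cancel, leaving $V_K\le-\frac23a_1+\sum_{j=2}^K\frac{4}{3^j}a_j+\sum_{j=1}^{K-1}\frac{2}{3^j}c_j+3^{1-K}(a_{K+1}+c_K)$.
\item Replace $c_j$ by the larger $a_j-a_{j+1}$. The sum telescopes and, with $n_K\le a_K$, gives $V_K\le 3^{-K}a_K\le\alpha 6^{-K}$.
\item Taking $K=M$ absorbs this error into the $\lessapprox$ notation.
\end{enumerate}
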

	
	\begin{proof}
		We keep track only of powers of $R$; the losses allowed by $\lessapprox$ are suppressed. Put $R_j=R^{q_j}$, where $q_j=2^{-j}$, and write
		$$
		\tau_j=\log_R(\bth_{1,j}\bth_{2,j}),\quad
		a_j=\log_R(r_{1,j}r_{2,j}),\quad
		c_j=\log_R(r_{1,j\subset j+1}r_{2,j\subset j+1}),
		$$
		$$
		n_j=\log_R(N_{1,j}N_{2,j}),\quad
		m_j=\log_R(M_{1,j}M_{2,j}),\quad
		h_j=\log_R\frac{H_{1,j+1}H_{2,j+1}}{H_{1,j}H_{2,j}}.
		$$
		Also write $b=\log_R(\lambda_{-1}(B_R))$. The basic estimate \eqref{hhggryfgorueyregyyug}, applied with $j=-1$, gives
		\begin{equation}
			\label{basic-exp-form}
			b\le \tau_0-2.
		\end{equation}
		We now derive a family of estimates by using $L_0,L_1,\ldots,L_k$, for $k\le M\sim \log\log R$. For $L_0$ we use the bilinear Kakeya bound \eqref{bilKbd}, while for $L_j$, $j\ge 1$, we use the Szemer\'{e}di--Trotter-type bound \eqref{53}. If $\log_R(L_j)=\ell_j$, then \eqref{kjfifvuhvuihrturtut} gives
		\begin{equation}
			\label{Lzero-exp-bound}
			\ell_0\le 4h_0+2\tau_1+m_1-\tau_0-n_0-\frac32,
		\end{equation}
		and, for $j\ge 1$,
		\begin{equation}
			\label{Lj-exp-bound}
			\ell_j\le 4h_j-2h_{j-1}+2\tau_{j+1}+m_{j+1}-\frac32\tau_j-\frac12m_j-\frac32n_j+(2\alpha-1)q_j.
		\end{equation}
		Indeed, \eqref{Lzero-exp-bound} is obtained from
		$$
		\#\B_0\lesssim \frac{\bT_{1,0}\bT_{2,0}}{N_{1,0}N_{2,0}},
		$$
		and \eqref{Lj-exp-bound} follows from
		$$
		\#\B_j\lessapprox 
		\frac{(\bT_{1,j}\bT_{2,j})^{1/2} \left( \frac{H_{1,{j-1}}}{H_{1,j}} \right)^2\left( \frac{H_{2,{j-1}}}{H_{2,j}} \right)^2 R_j^{2\alpha+\frac12} }{(N_{1,j}N_{2,j})^{3/2}},
		$$
		together with $\bT_{i,j}=M_{i,j}\bth_{i,j}$.
		
		Iterating \eqref{rh uhruihuuihugyugyy} gives
		$$
		\lambda_{-1}(B_R)\lessapprox L_0L_1\cdots L_k\lambda_k(B^*_{R_{k+1}}).
		$$
		Combining this with \eqref{Lzero-exp-bound}, \eqref{Lj-exp-bound}, and \eqref{hhggryfgorueyregyyug}, we obtain, for each $k\ge 1$,
		\begin{align}
			\label{raw-all-Lj-exp}
			b &\le 2(1-2^{-k})\alpha-\frac52-\tau_0
			+\frac12\sum_{j=1}^{k}\tau_j+3\tau_{k+1}
			-n_0-\frac32\sum_{j=1}^{k}n_j \nonumber\\
			&\quad +\frac12\sum_{j=1}^{k}m_j+m_{k+1}
			+2\sum_{j=0}^{k-1}h_j+4h_k.
		\end{align}
		The  inequalities from Lemma \ref{manipropert} become, in exponent form, for $j\ge 0$
		\begin{equation}
			\label{struct-exp-ineq}
			n_j=a_{j+1}+c_j,\quad n_j\le \tau_j,\quad \tau_{j+1}\le a_{j+1},\quad h_j\le c_j,
		\end{equation}
		and
		\begin{equation}
			\label{MH-exp-ineq}
			m_{j+1}+2h_j\le q_{j+1}+c_j,\qquad a_j\le \alpha q_j.
		\end{equation}
		The first identity in \eqref{struct-exp-ineq} is just \eqref{cjhhupreyu8gu8tg8tg87g8}, while the last inequality in \eqref{MH-exp-ineq} follows from \eqref{ee6}.
		
		Fix $1\le K\le M$. We take the weighted arithmetic mean of \eqref{basic-exp-form} and \eqref{raw-all-Lj-exp} for $1\le k\le K$, with weights
		$$
		w_0=\frac13,\qquad w_k=\frac{4}{3^{k+1}}\quad (1\le k<K),\qquad w_K=\frac{2}{3^K}.
		$$
		These weights add up to $1$. The resulting weighted estimate gives
		\begin{equation}
			\label{weighted-all-Lj-exp}
			b\le \frac45(1-6^{-K})\alpha-2+V_K,
		\end{equation}
		where the exact remainder coming from the weighted sum is
		\begin{align}
			\label{VK-exact}
			V_K={}&-\frac13
			-\frac13\tau_0+\frac13\tau_1
			+\sum_{j=2}^{K}\frac{13}{3^j}\tau_j
			+\frac{2}{3^{K-1}}\tau_{K+1} \nonumber\\
			&-\frac23n_0-\sum_{j=1}^{K}\frac{1}{3^{j-1}}n_j \nonumber\\
			&+\frac13m_1+\sum_{j=2}^{K}\frac{5}{3^j}m_j
			+\frac{2}{3^K}m_{K+1} \nonumber\\
			&+\frac43h_0+\sum_{j=1}^{K-1}\frac{20}{3^{j+1}}h_j
			+\frac{8}{3^K}h_K.
		\end{align}
		We now explain how  a useful upper bound for $V_K$ follows from \eqref{struct-exp-ineq} and \eqref{MH-exp-ineq}. First use $\tau_0\ge n_0=a_1+c_0$ (in the form $-\frac13{\tau_0}-\frac23{n_0}\le -a_1-c_0$), $\tau_j\le a_j$ for $j\ge 1$, and $n_j=a_{j+1}+c_j$ for $1\le j\le K$. The $\tau,n$ terms in \eqref{VK-exact} then give
		\begin{align}
			\label{VK-after-taun}
			V_K\le{}&-\frac13
			-\frac23a_1
			+\sum_{j=2}^{K}\frac{4}{3^j}a_j
			+\frac{1}{3^{K-1}}a_{K+1}
			-c_0-\sum_{j=1}^{K}\frac{1}{3^{j-1}}c_j \nonumber\\
			&+\frac13m_1+\sum_{j=2}^{K}\frac{5}{3^j}m_j
			+\frac{2}{3^K}m_{K+1}
			+\frac43h_0+\sum_{j=1}^{K-1}\frac{20}{3^{j+1}}h_j
			+\frac{8}{3^K}h_K.
		\end{align}
		Next group the remaining $m,h$ terms according to the inequalities $m_{j+1}+2h_j\le q_{j+1}+c_j$ and $h_j\le c_j$:
		\begin{align}
			&\frac13m_1+\sum_{j=2}^{K}\frac{5}{3^j}m_j
			+\frac{2}{3^K}m_{K+1}
			+\frac43h_0+\sum_{j=1}^{K-1}\frac{20}{3^{j+1}}h_j
			+\frac{8}{3^K}h_K \nonumber\\
			={}&\frac13(m_1+2h_0)+\frac23h_0
			+\sum_{j=1}^{K-1}\left[\frac{5}{3^{j+1}}(m_{j+1}+2h_j)
			+\frac{10}{3^{j+1}}h_j\right] \nonumber\\
			&+\frac{2}{3^K}(m_{K+1}+2h_K)+\frac{4}{3^K}h_K \nonumber\\
			\le{}&\frac13(q_1+c_0)+\frac23c_0
			+\sum_{j=1}^{K-1}\left[\frac{5}{3^{j+1}}(q_{j+1}+c_j)
			+\frac{10}{3^{j+1}}c_j\right] \nonumber\\
			&+\frac{2}{3^K}(q_{K+1}+c_K)+\frac{4}{3^K}c_K \nonumber\\
			={}&\frac13+c_0+\sum_{j=1}^{K-1}\frac{5}{3^j}c_j
			+\frac{2}{3^{K-1}}c_K,
			\label{mh-bound}
		\end{align}
		where in the last line we used $q_j=2^{-j}$, hence
		$$
		\frac13q_1+\sum_{j=1}^{K-1}\frac{5}{3^{j+1}}q_{j+1}
		+\frac{2}{3^K}q_{K+1}=\frac13.
		$$
		Substituting \eqref{mh-bound} into \eqref{VK-after-taun}, the constants and the $c_0$ terms cancel, and we obtain
		\begin{equation}
			\label{VK-first-bound}
			V_K\le
			-\frac23a_1
			+\sum_{j=2}^{K}\frac{4}{3^j}a_j
			+\frac{1}{3^{K-1}}a_{K+1}
			+\sum_{j=1}^{K-1}\frac{2}{3^j}c_j
			+\frac{1}{3^{K-1}}c_K.
		\end{equation}
		Finally use $a_j\ge n_j=a_{j+1}+c_j$ (replacing $c_j$ with the larger $a_j-a_{j+1}$), successively for $1\le j\le K$. The terms telescope:
		\begin{align*}
			V_K
			&\le -\frac{2}{3^K}a_K+\frac{1}{3^{K-1}}(a_{K+1}+c_K)\\
			&= -\frac{2}{3^K}a_K+\frac{1}{3^{K-1}}n_K\\
			&\le \frac{1}{3^K}a_K
			\le \frac{\alpha}{6^K}.
		\end{align*}
		Inserting this into \eqref{weighted-all-Lj-exp} yields, for every $1\le K\le M$,
		\begin{equation}
			\label{cklh hfghrug i[rtuh[h9]]}
			b\le \left(\frac45+\frac{1}{5\cdot 6^K}\right)\alpha-2.
		\end{equation}
		We choose $K=M$. Since $R^{1/2^M}\sim 1$, the factor $R^{\alpha/(5\cdot 6^M)}$ is absorbed into the $\lessapprox$ notation. Hence
		$$
		\lambda_{-1}(B_R)\lessapprox R^{\frac45\alpha-2},
		$$
		as desired.
	\end{proof}

	\begin{remark}[Limit of the method]
		\label{kjhe foreygeug-98y9670=u90=679}
		The exponent $4\alpha/5$ is the best exponent that can be obtained from the inequalities used in this section alone. More precisely, the argument above only uses the basic estimate \eqref{basic-exp-form}, the estimates \eqref{raw-all-Lj-exp} coming from $L_0,L_1,\ldots,L_k$, and the structural inequalities \eqref{struct-exp-ineq}--\eqref{MH-exp-ineq}. These inequalities form an abstract linear programming problem in the exponents $b,\tau_j,a_j,c_j,n_j,m_j,h_j$. We now exhibit a feasible point for this linear programming problem for which all the available estimates give exactly
		$$
		b=\frac45\alpha-2.
		$$
		This shows that no manipulation of these same inequalities can force an exponent smaller than $4\alpha/5$.
		
		For $0\le j\le M$ and $1\le j\le M+1$, set
		$$
		b=\frac45\alpha-2,
		\qquad
		\tau_0=\frac45\alpha,
		$$
		$$
		\tau_j=a_j=\frac45\alpha q_j\quad (1\le j\le M+1),
		$$
		$$
		c_0=h_0=\frac25\alpha,
		\qquad
		c_j=h_j=\frac45\alpha q_{j+1}\quad (1\le j\le M),
		$$
		and
		$$
		m_{j+1}=\left(1-\frac45\alpha\right)q_{j+1}\quad (0\le j\le M),
		\qquad
		n_j=a_{j+1}+c_j\quad (0\le j\le M).
		$$
		Then
		$$
		n_j=\tau_j,
		\qquad
		\tau_{j+1}=a_{j+1},
		\qquad
		h_j=c_j,
		$$
		and
		$$
		m_{j+1}+2h_j=q_{j+1}+c_j.
		$$
		Also
		$$
		a_j=\frac45\alpha q_j\le \alpha q_j.
		$$
		Thus all the structural inequalities \eqref{struct-exp-ineq}--\eqref{MH-exp-ineq} are satisfied, with equality except for the harmless slack in $a_j\le \alpha q_j$.
		
		The basic estimate \eqref{basic-exp-form} is also sharp on this example, since
		$$
		\tau_0-2=\frac45\alpha-2=b.
		$$
		Moreover, substituting the same values into \eqref{raw-all-Lj-exp} gives, for every $1\le k\le M$,
		\begin{align*}
			&2(1-2^{-k})\alpha-\frac52-\tau_0
			+\frac12\sum_{j=1}^{k}\tau_j+3\tau_{k+1}
			-n_0-\frac32\sum_{j=1}^{k}n_j  \\
			&\qquad
			+\frac12\sum_{j=1}^{k}m_j+m_{k+1}
			+2\sum_{j=0}^{k-1}h_j+4h_k
			=\frac45\alpha-2=b.
		\end{align*}
		Therefore the basic estimate and every estimate obtained from $L_0,L_1,\ldots,L_k$ are simultaneously saturated at the exponent $4\alpha/5-2$ in the exponent bookkeeping.

	\end{remark}
	
	\section{Proof of Theorem \ref{mmmain3}, the case $\alpha>\frac12$}
	\label{sec:large-alpha-computation}
	This section records the computation obtained from the same iteration as in Section \ref{sec:4}, but using the Szemer\'{e}di--Trotter estimate \eqref{ST-large-alpha} in place of \eqref{53}. 
	\begin{theorem}
		Assume $\frac12<\alpha<1$. Then
		\[
		\lambda_{-1}(B_R)\lessapprox R^{\Phi(\alpha)-2},
		\]
		where
		\[
		\Phi(\alpha)=
		\begin{cases}
			\displaystyle \frac{2\alpha}{3-\alpha},& \displaystyle \frac12<\alpha\le \frac23,\\[6pt]
			\displaystyle \frac{-3\alpha^2+13\alpha-2}{4(3-\alpha)},& \displaystyle \frac23\le \alpha<1.
		\end{cases}
		\]
		The two formulae agree at $\alpha=\frac23$.
	\end{theorem}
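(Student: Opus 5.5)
We will run exactly the bookkeeping scheme of Section \ref{sec:4}, changing only the estimate used for $L_j$, $j\ge 1$: the bound \eqref{ST-large-alpha} replaces \eqref{53}, while $L_0$ is still handled by the bilinear Kakeya bound \eqref{bilKbd}. We use the same notation $q_j,\tau_j,a_j,c_j,n_j,m_j,h_j,b$ as there.

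\textbf{Step 1: exponent form of $L_j$ and the chained bounds.} From \eqref{kjfifvuhvuihrturtut},
$$\ell_j=4h_j+2\tau_{j+1}+m_{j+1}-\tau_j-\tfrac32 q_j+\log_R\#\B_j-(m_j+\tau_j).$$
Since $H_{i,j-1}/H_{i,j}$ has exponent $-h_{j-1}$, \eqref{ST-large-alpha} gives
$$\log_R\#\B_j\le \tfrac12(m_j+\tau_j)-\tfrac1\alpha h_{j-1}+\tfrac32q_j-\tfrac{\alpha+1}{2\alpha}n_j .$$
Substituting, we get for $j\ge1$
$$\ell_j\le 4h_j-\tfrac1\alpha h_{j-1}+2\tau_{j+1}+m_{j+1}-\tfrac32\tau_j-\tfrac12m_j-\tfrac{\alpha+1}{2\alpha}n_j .$$
Compared with \eqref{Lj-exp-bound}, the $q_j$ term vanishes and the coefficients of $h_{j-1}$ and $n_j$ change. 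The bound \eqref{Lzero-exp-bound} is unchanged. Iterating \eqref{rh uhruihuuihugyugyy} and closing with \eqref{hhggryfgorueyregyyug} at level $k$ yields, for each $k\ge1$, an analogue of \eqref{raw-all-Lj-exp}. Together with \eqref{basic-exp-form} these are the inequalities available for $b$.

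\textbf{Step 2: weighted combination.} We take a convex combination of \eqref{basic-exp-form} and the $k$-chains for $1\le k\le K$. The weights are $w_0$, then $w_k=w_1\rho^{k-1}$ geometrically, with a modified last weight $w_K$. The ratio $\rho$ and $w_0$ are now free parameters depending on $\alpha$; in Section \ref{sec:4} they were $\rho=\frac13$ and $w_0=\frac13$.

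We choose $\rho$ and $w_0$ so that, after applying the structural inequalities \eqref{struct-exp-ineq}--\eqref{MH-exp-ineq}, two things happen:
\begin{enumerate}
\item[(i)] the $m,h$ terms regroup into multiples of $m_{j+1}+2h_j\le q_{j+1}+c_j$ plus nonnegative multiples of $h_j\le c_j$;
\item[(ii)] the resulting $a$, $c$ terms telescope via $a_j\ge n_j=a_{j+1}+c_j$, and the $\tau$, $n$ terms are eliminated via $n_0\le\tau_0$ and $\tau_j\le a_j$.
\end{enumerate}
The coefficient of $h_{j-1}$ is now $-\frac1\alpha$ rather than $-2$. This changes the balance conditions, so solving them gives a $\rho$ that is a rational function of $\alpha$. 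Summing the resulting $q_j$ and $a_j\le\alpha q_j$ contributions then produces $\Phi(\alpha)$ plus an error of size $O(\rho^K)$. We take $K=M$ to absorb that error into $\lessapprox$.

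\textbf{Step 3 (main obstacle): feasibility of the weights and the case split.} The hardest step is the sign constraints in (i). The coefficient left in front of each leftover $h_j$ (after forming $m_{j+1}+2h_j$) must be nonnegative, so that $h_j\le c_j$ may be applied. Similarly, the final leftover coefficient of $a_K$ must have the correct sign to use $a_K\le\alpha q_K$.

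I expect that for $\frac12<\alpha\le\frac23$ the optimal solution makes all slack in $h_j\le c_j$ vanish, which should give $\Phi=\frac{2\alpha}{3-\alpha}$. For $\alpha\ge\frac23$ one of these coefficients would turn negative. There I would instead drop $h_j\le c_j$ for the offending term and bound $h_{j-1}$ using $a_j\le \alpha q_j$ together with $h_j\le c_j\le a_j-a_{j+1}$. This changes the linear program, and the new optimum should produce the quadratic numerator $-3\alpha^2+13\alpha-2$.

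To locate the right weights, I would first solve the dual linear program in the regime of small $K$ (say $K=1,2$) and extrapolate the geometric pattern. As a consistency check, I would exhibit a feasible point saturating all inequalities, as in Remark \ref{kjhe foreygeug-98y9670=u90=679}. I would also check that the two formulas agree at $\alpha=\frac23$.
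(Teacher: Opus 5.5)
\textbf{Verdict: genuine gap for $\frac23\le\alpha<1$.} Your Step 1 is correct. For $\frac12<\alpha\le\frac23$ your plan is the paper's: geometric weights with ratio $\rho=\frac{1-\alpha}{2-\alpha}$ and $w_0=\frac{2-3\alpha}{2-\alpha}$. This $w_0$ is exactly the value that makes the net coefficient of $c_0$ vanish, after you use $\tau_0\ge n_0=a_1+c_0$, $m_1+2h_0\le q_1+c_0$ and $h_0\le c_0$; that coefficient is $3w_0-2+(1-w_0)(\frac72-\frac1\alpha)$.

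\textbf{Where your plan goes wrong.} Your diagnosis of what breaks at $\alpha=\frac23$ is incorrect. In every $k$-chain \eqref{raw-large-alpha-exp}, the coefficient of $h_j$ left over after pairing with $m_{j+1}$ is $\delta-1=3-\frac1\alpha$ for $j<k$ and $2$ for $j=k$. These are always positive, so $h_j\le c_j$ is never the obstruction. What actually fails is that $w_0$ turns negative, so the positive coefficient of $c_0$ can no longer be cancelled. The paper puts zero weight on \eqref{basic-exp-form}, with $w_1=\frac{3(1-\alpha)}{2-\alpha}$ followed by geometric weights of ratio $\rho$. It is then left with $+\frac{3\alpha-2}{2\alpha}c_0$, and it controls this with an input you never invoke: $c_0\le\frac\alpha2$, i.e.\ $r_{i,0\subset 1}\lesssim R^{\alpha/4}$ from \eqref{ee7}. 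This is exactly where the quadratic comes from:
\[
\Phi(\alpha)=2-\frac{5(2-\alpha)}{2(3-\alpha)}+\frac{3\alpha-2}{2\alpha}\cdot\frac\alpha2 .
\]

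\textbf{Why the missing input is necessary.} The omission is not cosmetic: the inequalities you list cannot reach $\Phi(\alpha)$. For $\alpha=\frac9{10}$, take
\[
\tau_0=n_0=\tfrac{9}{10},\qquad c_0=h_0=\tfrac{33}{70},\qquad m_1=\tfrac1{35},
\]
\[
a_j=\tau_j=n_j=\tfrac67\,q_j,\qquad c_j=h_j=\tfrac67\,q_{j+1},\qquad m_{j+1}=\tfrac17\,q_{j+1}\qquad (j\ge 1).
\]
This point satisfies \eqref{struct-exp-ineq}, \eqref{MH-exp-ineq}, nonnegativity, and even $\tau_0\le\alpha$ and $c_j\le\alpha q_{j+1}$ for $j\ge1$. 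The only constraint it violates is $c_0\le\frac\alpha2$, since $\frac{33}{70}>\frac9{20}$. At this point \eqref{basic-exp-form} gives $b\le-\frac{11}{10}$, and every $k$-chain gives exactly $b\le-\frac{473}{420}$. The latter exceeds $\Phi(\frac9{10})-2=-\frac{953}{840}$.

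Any nonnegative combination of these inequalities holds at this point. So no choice of weights can yield $\Phi(\alpha)$ here. Neither can rerouting how $h_{j-1}$ is bounded: $a_j\le\alpha q_j$ and $c_j\le a_j-a_{j+1}$ are already implied by the listed constraints, so they add nothing. You need the extra constraint $c_0\le\alpha/2$.
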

	\begin{proof}
		We use the exponent notation from Section \ref{sec:4}: $q_j=2^{-j}$, $R_j=R^{q_j}$, and
		\[
		\tau_j=\log_R(\bth_{1,j}\bth_{2,j}),\quad
		a_j=\log_R(r_{1,j}r_{2,j}),\quad
		c_j=\log_R(r_{1,j\subset j+1}r_{2,j\subset j+1}),
		\]
		\[
		n_j=\log_R(N_{1,j}N_{2,j}),\quad
		m_j=\log_R(M_{1,j}M_{2,j}),\quad
		h_j=\log_R\frac{H_{1,j+1}H_{2,j+1}}{H_{1,j}H_{2,j}},
		\]
		and $b=\log_R(\lambda_{-1}(B_R))$. Put
		\[
		\beta=\frac{\alpha+1}{2\alpha},\qquad \delta=4-\frac1\alpha.
		\]
		The estimate for $L_0$ is still \eqref{Lzero-exp-bound}. For $j\ge1$, however, \eqref{kjfifvuhvuihrturtut} and \eqref{ST-large-alpha} give
		\begin{equation}
			\label{Lj-large-alpha-exp-bound}
			\ell_j\le 4h_j-\frac1\alpha h_{j-1}+2\tau_{j+1}+m_{j+1}
			-\frac32\tau_j-\frac12m_j-\beta n_j.
		\end{equation}
		Here the factor $R_j^{3/2}$ in \eqref{ST-large-alpha} cancels the factor $R_j^{-3/2}$ in \eqref{kjfifvuhvuihrturtut}. Iterating as in Section \ref{sec:4}, and then using the basic estimate at scale $R_{k+1}$, gives for each $k\ge1$
		\begin{align}
			\label{raw-large-alpha-exp}
			b &\le -\frac32-2^{-k}-\tau_0
			+\frac12\sum_{j=1}^{k}\tau_j+3\tau_{k+1}
			-n_0-\beta\sum_{j=1}^{k}n_j \nonumber\\
			&\quad +\frac12\sum_{j=1}^{k}m_j+m_{k+1}
			+\delta\sum_{j=0}^{k-1}h_j+4h_k.
		\end{align}
		We shall use \eqref{struct-exp-ineq} and \eqref{MH-exp-ineq}. We also record the consequence of \eqref{ee7}
		\begin{equation}
			\label{c-large-alpha-exp-bound}
			c_j\le \alpha q_{j+1}\qquad (j\ge0).
		\end{equation}
		In particular, $c_0\le \alpha/2$. Finally, from \eqref{struct-exp-ineq} we shall repeatedly use
		\begin{equation}
			\label{a-dominates-next-c}
			a_j\ge n_j=a_{j+1}+c_j\qquad (j\ge1).
		\end{equation}
		
		First assume $\frac12<\alpha\le\frac23$. Set
		\[
		\rho=\frac{1-\alpha}{2-\alpha},\qquad
		A=\frac{2(3\alpha-1)}{2-\alpha}.
		\]
		For $1\le K\le M$, take the weighted arithmetic mean of \eqref{basic-exp-form} and \eqref{raw-large-alpha-exp}, $1\le k\le K$, with weights
		\[
		w_0=\frac{2-3\alpha}{2-\alpha},\qquad
		w_k=\frac{2\alpha}{(2-\alpha)^2}\rho^{k-1}\quad (1\le k<K),\qquad
		w_K=\frac{2\alpha}{2-\alpha}\rho^{K-1}.
		\]
		These weights are non-negative and add up to $1$. Substituting $\tau_j\le a_j$, $n_j=a_{j+1}+c_j$, $h_j\le c_j$, and $m_{j+1}+2h_j\le q_{j+1}+c_j$ into the weighted estimate gives
		\begin{align}
			\label{large-alpha-case-one-rem}
			b\le{}& -2+\frac{2\alpha}{3-\alpha}
			-Aa_1+\frac{A}{2-\alpha}\sum_{j=2}^{K}\rho^{j-2}a_j
			+A\sum_{j=1}^{K-1}\rho^{j-1}c_j \nonumber\\
			&\quad +O_\alpha\big(\rho^K(a_{K+1}+c_K)\big).
		\end{align}
		The non-terminal terms in \eqref{large-alpha-case-one-rem} telescope. Indeed, since $1-\frac1{2-\alpha}=\rho$, \eqref{a-dominates-next-c} gives
		\[
		-Aa_1+Ac_1+\frac{A}{2-\alpha}a_2\le -A\rho a_2,
		\]
		and the same argument at the next scales gives
		\[
		-A\rho^{j-1}a_j+A\rho^{j-1}c_j+\frac{A}{2-\alpha}\rho^{j-1}a_{j+1}
		\le -A\rho^j a_{j+1}.
		\]
		Thus the non-terminal part of the remainder is non-positive. Since $a_K\le \alpha 2^{-K}$ and $c_K\le a_K$, the terminal contribution is $O_\alpha((\rho/2)^K)$. Therefore
		\[
		b\le -2+\frac{2\alpha}{3-\alpha}+O_\alpha((\rho/2)^K).
		\]
		Taking $K=M$ absorbs the final error into the $\lessapprox$ notation.
		
		It remains to consider $\frac23\le\alpha<1$. We keep the same $\rho=(1-\alpha)/(2-\alpha)$ and set
		\[
		E=\frac{(2\alpha-1)(3\alpha-1)}{\alpha(2-\alpha)}.
		\]
		For $2\le K\le M$, take the weighted arithmetic mean of \eqref{raw-large-alpha-exp}, $1\le k\le K$, with weights
		\[
		w_1=\frac{3(1-\alpha)}{2-\alpha},\qquad
		w_k=\frac{2\alpha-1}{(2-\alpha)^2}\rho^{k-2}\quad (2\le k<K),\qquad
		w_K=\frac{2\alpha-1}{2-\alpha}\rho^{K-2}.
		\]
		Here no weight is placed on the basic estimate. Again the weights are non-negative and add up to $1$. Applying the same substitutions as above gives
		\begin{align}
			\label{large-alpha-case-two-rem-before-c0}
			b\le{}& -\frac{5(2-\alpha)}{2(3-\alpha)}+\frac{3\alpha-2}{2\alpha}c_0
			-\frac32(a_1-a_2-c_1)-Ea_2 \nonumber\\
			&\quad +\frac{E}{2-\alpha}\sum_{j=3}^{K}\rho^{j-3}a_j
			+E\sum_{j=2}^{K-1}\rho^{j-2}c_j
			+O_\alpha\big(\rho^K(a_{K+1}+c_K)\big).
		\end{align}
		Now use \eqref{c-large-alpha-exp-bound} in the form $c_0\le \alpha/2$. The constant terms become
		\[
		-\frac{5(2-\alpha)}{2(3-\alpha)}+\frac{3\alpha-2}{4}
		=-2+\frac{-3\alpha^2+13\alpha-2}{4(3-\alpha)}.
		\]
		Also $a_1-a_2-c_1\ge0$ by \eqref{a-dominates-next-c}. The remaining non-terminal terms telescope exactly as before:
		\[
		-Ea_2+Ec_2+\frac{E}{2-\alpha}a_3\le -E\rho a_3,
		\]
		and, inductively,
		\[
		-E\rho^{j-2}a_j+E\rho^{j-2}c_j+\frac{E}{2-\alpha}\rho^{j-2}a_{j+1}
		\le -E\rho^{j-1}a_{j+1}.
		\]
		As in the previous case, the terminal contribution is $O_\alpha((\rho/2)^K)$. Hence
		\[
		b\le -2+\frac{-3\alpha^2+13\alpha-2}{4(3-\alpha)}+O_\alpha((\rho/2)^K).
		\]
		Taking $K=M$ completes the proof.
	\end{proof}
	
	\section{Proof of Theorem \ref{mmmain1}}
	\label{sec:5}
	
	\begin{definition}[Linear decoupling constant]
		Let $\Dec_{p,\alpha}(R)$ be the smallest constant such that
		for each  $(\alpha,R^{1/2},C_{AD})$-AD regular collection $\I=\I_{R^{1/2}}$, each  $F:\R^2\to\C$ with spectrum inside $\cN_{1/R}(\I)$ and each ball $B_R\subset \R^2$ of radius $R$ we have 
		$$
		\|F\|_{L^p(B_R)}\le \Dec_{p,\alpha}(R)(\sum_{\theta\in \Theta(\I)}\|F_\theta\|^2_{L^p(B_R)})^{1/2}.
		$$
	\end{definition}
	For complete rigor, $L^p(B_R)$ would need to be replaced with a weighted version. We omit such details. 
	Theorem \ref{mmmain1} may be equivalently reformulated as 
	$$\Dec_{p,\alpha}(R)\lessapprox_{C_{AD}}R^{\frac\alpha2(\frac12-\frac3p-c_{p,\alpha})}.$$
	We collect a few results.
	\begin{proposition}[Rescaling]
		\label{propo:parabooorescal}
		Consider an $(\alpha,R^{1/2},C_{AD})$-AD regular collection $\I=\I_{R^{1/2}}$ and some $F:\R^2\to\C$ with spectrum inside $\cN_{1/R}(\I)$. Let $\tau= \cN_{1/R}\cap (H\times \R)$ be the curved tube that projects down to an interval $H$ of length $\sim R^{-1/4}$, centered at some $c_H\in\cup_{I\in\I}I$. Then
		$$\|\sum_{\theta\subset \tau\atop{\theta\in\Theta(\I)}}F_{\theta}\|_{L^p(\R^2)}\lesssim \Dec_{p,\alpha}(R^{1/2})(\sum_{\theta\subset \tau\atop{\theta\in\Theta(\I)}}\|F_\theta\|^2_{L^p(\R^2)})^{1/2}.$$
	\end{proposition}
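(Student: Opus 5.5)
The plan is to apply the standard parabolic rescaling and then verify that the rescaled collection of intervals is again AD regular at the coarser scale. Let $H$ have center $c_H$ and length $\sigma\sim R^{-1/4}$. Apply the affine map
$$L(\xi_1,\xi_2)=\Big(\frac{\xi_1-c_H}{\sigma},\ \frac{\xi_2-2c_H\xi_1+c_H^2}{\sigma^2}\Big).$$
This map sends the parabola to itself, because $(x,x^2)\mapsto\big(u,u^2\big)$ with $u=(x-c_H)/\sigma$.

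Next I check what $L$ does to the relevant regions.
\begin{itemize}
\item It maps $\tau=\cN_{1/R}\cap(H\times\R)$ into $\cN_{1/(R\sigma^2)}$, up to a constant. Since $R\sigma^2\sim R^{1/2}$, this is the $1/R^{1/2}$-neighborhood of the parabola.
\item Each $\theta\subset\tau$ has dimensions $R^{-1/2}\times R^{-1}$. Its image is a region of dimensions $R^{-1/4}\times R^{-1/2}$, which is $(R^{1/2})^{-1/2}\times(R^{1/2})^{-1}$.
\item The intervals $I\in\I$ with $I\subset H$ have length $R^{-1/2}$. They map to intervals $\tilde I=(I-c_H)/\sigma$ of length $R^{-1/4}=(R^{1/2})^{-1/2}$, lying in an interval of length $\sim1$.
\end{itemize}
On the spatial side, define $G=F\circ L^{*}$ (suitably normalized) via the dual linear map. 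Then $\|G_{\tilde\theta}\|_p$ and $\|F_\theta\|_p$ differ by the same Jacobian factor for all $\theta$, and the same factor appears for the full sum. So the inequality for $F$ on $\tau$ is equivalent to the decoupling inequality for $G$ at scale $R^{1/2}$. Here we use the global $L^p(\R^2)$ version of $\Dec_{p,\alpha}(R^{1/2})$, obtained by summing the local $B_{R^{1/2}}$ inequality over a finitely overlapping cover and applying Minkowski, since $p\ge2$.

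The main point is to verify that $\tilde\I=\{\tilde I: I\subset H\}$ is an $(\alpha,R^{1/4},C')$-AD regular collection with $C'\lesssim C_{AD}^{O(1)}$. Take an interval $\tilde J$ of length $2R^{-1/4}\le|\tilde J|\le4$ centered at a point of $\cup\tilde I$. It corresponds to $J=c_H+\sigma\tilde J$, of length $\sigma|\tilde J|\in[2R^{-1/2},4\sigma]$, centered at a point of $\cup_{I\in\I}I$. Applying \eqref{kdjchpvuryue8urtg9t08h=670980} to $J$ gives
$$\#\{\tilde I\subset\tilde J\}=\#\{I\subset J\}\sim_{C_{AD}}(|J|R^{1/2})^\alpha=(|\tilde J|R^{1/4})^\alpha.$$
Two issues need care.
\begin{itemize}
\item When $J$ sticks out of $H$, the intervals $I\subset J$ but $I\not\subset H$ should not be counted. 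To handle this, take $\tau$ to be the union of $\theta$ over intervals $I$ inside a slightly enlarged $H$, or apply the lower bound to a sub-interval $J'\subset J\cap 2H$ of comparable length centered at the same point. When $|\tilde J|\le 1$ such a $J'$ exists, and $c_H\in\cup I$ ensures the lower bound also holds at the top scale.
\item $\tilde\I$ lives in $[-1,1]$ only after a harmless translation or dilation by a constant factor, which is absorbed into $C'$.
\end{itemize}

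Finally, the dependence of $\Dec_{p,\alpha}$ on $C_{AD}$ is implicit in the definition. Changing $C_{AD}$ to $C'$ only changes constants, and the statement's $\lesssim$ absorbs this. So I expect the AD-regularity bookkeeping near the boundary of $H$ to be the only real obstacle; everything else is the standard parabolic rescaling argument.
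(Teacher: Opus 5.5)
Your proposal is essentially the paper's proof: the same affine parabolic rescaling, sending $\tau$ to $\cN_{1/\sqrt{R}}$ and each $\theta\subset\tau$ to a cap over the rescaled collection $\{L(I):I\in\I,\ I\subset H\}$, whose AD-regularity the paper simply asserts without any discussion of the boundary of $H$. The boundary issue you flag is genuine, and your patches do not quite settle it as written: passing to $2H$ only moves the boundary, and counting intervals of $2H$ around centers in $H$ is not AD-regularity of one collection. What works is to enlarge $H$ to some $H''\supset H$ with $|H''|\sim|H|$ whose endpoints are midpoints of gaps of $\cup\I$ of length $\gtrsim_{C_{AD},\alpha}|H|$ (such gaps exist by porosity, as $\alpha<1$), so that points of $\cup\{I\in\I: I\subset H''\}$ stay at distance $\gtrsim|H|$ from $\partial H''$, and then to set $F_\theta=0$ on the extra caps.
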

	\begin{proof}
		Let $L$ be the affine function mapping $H$ to $[-1,1]$.
		The collection of intervals $\I'=\{L(I):\;I\in\I,\;I\subset H\}$  is $(\alpha,R^{1/4},C_{AD})$-AD regular. The function
		$$(x,y)\mapsto(R^{1/4}(x-c_H),R^{1/2}(y-2c_Hx+c_H^2))$$
		maps $\tau$ to $\cN_{1/\sqrt{R}}(\I')$ and each $\theta\subset \tau$ to some $\theta'\in\Theta(\I')$.
		
	\end{proof}

	Let $J_1,J_2\subset [-1,1]$ be intervals separated by $\sim 1$.   For $i\in\{1,2\}$, let $\I_i$ be $(\alpha,R^{1/2},C_{AD})$-AD regular collections of intervals inside $J_i$. Call $\I_i'$ the $(\alpha,R^{1/4},C_{AD})$-AD regular collection of $R^{-1/4}$-intervals covering $\I_i$.
	
	We let as before  $\Theta_{i,0}$ and $\Theta_{i,1}$ denote $\Theta(\I_i)$ and $\Theta(\I'_i)$, respectively.

	Assume  $F_i:\R^2\to\C$ has spectrum inside $\cN_{1/R}(\I_i)$.
	Let $B$ be an arbitrary ball in $\R^2$, of radius bigger than 1. We use the notation 
	$$\|F_i\|_{p,R,B}=
	(\sum_{\theta\in \Theta_{i,0}}\|F_{i,\theta}\|^2_{L^{p}(B)})^{1/2},
	$$
	$$\|F_i\|_{p,\sqrt{R},B}=
	(\sum_{\tau\in \Theta_{i,1}}\|F_{i,\tau}\|^2_{L^{p}(B)})^{1/2}.
	$$
	\begin{definition}[Bilinear decoupling constant]
		Let $\BilDec_{p,\alpha}(R)$ be the smallest constant such that for each $F_1,F_2$ as above we have
		$$
		\|(F_1F_2)^{1/2}\|_{L^p(B_R)}\le \BilDec_{p,\alpha}(R)(\|F_1\|_{p,R,B_R}\|F_2\|_{p,R,B_R})^{1/2}.
		$$
	\end{definition}
	The proof of the next result is the same as in the case $\alpha=1$. It relies on the good rescaling properties of $AD$-regular collections, as described in the proof of Proposition \ref{propo:parabooorescal}.
	\begin{proposition}[Bilinear to linear reduction]
		\label{klen rhufy re8gi[rt9g[yth9670-]]}
		$$\Dec_{p,\alpha}(R)\lessapprox \BilDec_{p,\alpha}(R).$$
	\end{proposition}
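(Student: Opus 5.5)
We follow the Bourgain--Guth broad/narrow argument in the form used in \cite{BD}, with the rescaling of Proposition \ref{propo:parabooorescal} replacing parabolic rescaling for the full parabola. Fix an $(\alpha,R^{1/2},C_{AD})$-AD regular collection $\I$, a function $F$ with spectrum in $\cN_{1/R}(\I)$, and a ball $B_R$. Fix a large constant $K$ and partition $[-1,1]$ into intervals $H$ of length $1/K$. Write $F=\sum_{\tau}F_\tau$, where $\tau$ ranges over the curved tubes above those $H$ that meet $\cup_{I\in\I}I$. There are at most $\lesssim_{C_{AD}}K^{\alpha}$ such $\tau$.

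\textbf{Pointwise broad/narrow dichotomy.} At each point $x\in B_R$, one of two cases holds. In the narrow case, some single $\tau$ satisfies $|F(x)|\lesssim K^{O(1)}|F_\tau(x)|$; more precisely, $|F(x)|\le C\max_\tau|F_\tau(x)|$ up to the standard bookkeeping. In the broad case, there are two $K^{-1}$-separated tubes $\tau_1,\tau_2$ with $|F(x)|\lesssim K^{O(1)}|F_{\tau_1}(x)F_{\tau_2}(x)|^{1/2}$. Integrating gives
$$\|F\|_{L^p(B_R)}\lesssim \Big(\sum_\tau\|F_\tau\|_{L^p(B_R)}^p\Big)^{1/p}+K^{O(1)}\max_{\tau_1,\tau_2}\|(F_{\tau_1}F_{\tau_2})^{1/2}\|_{L^p(B_R)}.$$

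\textbf{Bounding the two terms.} For the broad term, we apply an affine map sending $H_1\cup H_2$ (or a slightly larger interval) to $[-1,1]$, so that $H_1$ and $H_2$ become $\sim1$-separated. The images of $\I$ inside each $H_i$ remain AD-regular with constant $C_{AD}K^{O(1)}$ at scale $\sim R^{1/2}/K$; this is the same mechanism as in the proof of Proposition \ref{propo:parabooorescal}. The definition of $\BilDec_{p,\alpha}$ then bounds the broad term by $K^{O(1)}\BilDec_{p,\alpha}(R)\,\|F\|_{p,R,B_R}$. For the narrow term, we rescale each $\tau$, which is a cap of length $1/K$, in the manner of Proposition \ref{propo:parabooorescal} but at scale $K$ rather than $R^{1/4}$. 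This bounds $\|F_\tau\|_{L^p}$ by $\Dec_{p,\alpha}(R/K^2)(\sum_{\theta\subset\tau}\|F_\theta\|_p^2)^{1/2}$. Then $\ell^p\le\ell^2$ across the $\tau$ gives
$$\Dec_{p,\alpha}(R)\le C\,\Dec_{p,\alpha}(R/K^2)+K^{O(1)}\BilDec_{p,\alpha}(R).$$
The constant $C$ here is independent of $K$, and that independence is the key point. The main technical obstacle is justifying it when $\Dec$ is defined as a supremum over AD-regular collections: the rescaled collection inside a cap has AD-regular constant comparable to $C_{AD}$ only up to absolute factors, so we must make sure the class is closed under this rescaling without degrading $C_{AD}$. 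As in Proposition \ref{propo:parabooorescal}, AD-regularity is exactly scale-invariant when the cap is centered at a point of $\cup I$, so the constant does not degrade. For caps not centered there, we shift the cap to be centered at such a point, at the cost of covering by $O(1)$ caps.

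\textbf{Iteration.} We iterate the recursion $\log_K R$ times. Since $\BilDec$ is essentially monotone in $R$ (up to the usual $\lessapprox$ losses, via the same rescaling), this yields
$$\Dec_{p,\alpha}(R)\lesssim C^{\log R/\log K}+K^{O(1)}\log R\,\max_{R'\le R}\BilDec_{p,\alpha}(R').$$
Choosing $K=R^{\epsilon}$, or more carefully a large constant $K=K(\epsilon)$ so that $C^{\log R/\log K}\le R^{\epsilon}$, gives $\Dec_{p,\alpha}(R)\lessapprox\BilDec_{p,\alpha}(R)$. This step uses that the decoupling constants are at least $1$, so the first term is dominated by the second.
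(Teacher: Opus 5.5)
Your argument is the route the paper intends. The paper only says the proof is the $\alpha=1$ argument combined with the rescaling of Proposition \ref{propo:parabooorescal}, and your pointwise broad/narrow dichotomy, the recursion with a $K$-independent constant in front of $\Dec_{p,\alpha}(R/K^2)$, and the choice of a large constant $K=K(\epsilon)$ are exactly that argument.

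The step that does not hold up is the last one. When you rescale the convex hull of $H_1\cup H_2$ to $[-1,1]$, the broad term is controlled by $\BilDec_{p,\alpha}(R')$, not by $\BilDec_{p,\alpha}(R)$. Here $R'\in[R/K^2,R]$ is determined by the length of that hull. After iterating you meet the bilinear constant at every scale $R'\le R$, so what your argument proves is $\Dec_{p,\alpha}(R)\lessapprox\max_{R'\le R}\BilDec_{p,\alpha}(R')$. The claimed essential monotonicity of $\BilDec_{p,\alpha}$ ``via the same rescaling'' is not justified. The map of Proposition \ref{propo:parabooorescal} zooms in. To push a configuration at scale $R'$ up to scale $R>R'$ you would have to zoom out by $\lambda=(R'/R)^{1/2}$. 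That shrinks the separation of $J_1,J_2$ to $\sim\lambda$, so the configuration is no longer admissible in the definition of $\BilDec_{p,\alpha}(R)$, and it also breaks the lower AD bound at scales above $\lambda$. No other affine symmetry of the parabola changes the scale while keeping the two arcs $\sim 1$ apart. You should therefore stop at the maximal form. That is all the later argument uses: in Section \ref{sec:5} the reduction is applied to a power bound for the bilinear constant valid at every scale with the same exponent, so the maximum over $R'\le R$ costs nothing there.

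A second issue is one you share with the proof of Proposition \ref{propo:parabooorescal}. Centering a cap $H$ at a point of $\cup_{I\in\I}I$ does not make $\{L(I):I\in\I,\ I\subset H\}$ AD-regular with the same constant. Nor does it give constant $C_{AD}K^{O(1)}$, as you assert for the broad term. The trouble is at the endpoints of $H$, because AD-regularity allows one-sided points (think of an endpoint of a Cantor set). An interval $I_0\subset H$ adjacent to an endpoint of $H$ may see no other interval of $\I$ inside $H$ within distance $\sim|H|$. It can still satisfy the lower bound in the definition, thanks only to intervals just outside $H$. After truncation and rescaling, the lower bound then fails with a constant as large as $(R^{1/2}|H|)^{\alpha}$.

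What truncation and zoom-in rescaling do preserve exactly are the upper bounds at all intermediate scales: the part of $\cup I$ inside $J$ is covered by at most $C_{AD}^{O(1)}(|J|/\rho)^{\alpha}$ intervals of length $\rho$. These appear to be the only consequences of regularity used in Sections \ref{sec:3}--\ref{sec:5}. Running your recursion over the class defined by these upper bounds makes the ``no degradation of $C_{AD}$'' step correct. The lower bound is then needed only once, for the original collection, to identify $R^{\alpha/2}$ with $\#\Theta(\I)$.
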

	\medskip
	
	Let us now prove Theorem \ref{mmmain2} assuming Theorem \ref{mmmain3}. We restate Theorem \ref{mmmain2}  in its equivalent local formulation that will be needed later in our argument.

	\begin{theorem}[Improved local $(p,\frac{p}2)$ bilinear decoupling]
		\label{invprop1}
		Let $\alpha\in(0,1)$. 
		For each $4< p<\infty$ 
		there is $$\Gamma_{p,\alpha}<\alpha(\frac14-\frac1p)-\frac2p$$
		such that for each ball $B_R\subset \R^2$ of radius $R$
		\begin{equation}
			\label{inv2}
			\|(\prod_{i=1}^2|F_i|)^{1/2}\|_{L^p(B_R)}\lessapprox _{C_{AD}}R^{\Gamma_{p,\alpha}} (\prod_{i=1}^2\|F_i\|_{\frac{p}{2},R,B_R})^{\frac1{2}}.
		\end{equation}
	\end{theorem}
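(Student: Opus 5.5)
We will obtain \eqref{inv2} by interpolating three bilinear $(p,\frac p2)$ estimates at the exponents $p=4$, $p=8$ and $p=\infty$. Throughout we use the pointwise identity $|(F_1F_2)^{1/2}|^p=|F_1F_2|^{p/2}$. We also use that $\#\Theta(\I_i)\lesssim_{C_{AD}}R^{\alpha/2}$.

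\emph{Endpoint $p=4$.} This is the bilinear $(4,2)$ decoupling, i.e. bilinear restriction/$L^2$ orthogonality for transversal pieces. It states that $\|(F_1F_2)^{1/2}\|_{L^4(B_R)}\lessapprox R^{-1/2}(\prod_i\|F_i\|_{2,R,B_R})^{1/2}$, which gives $\Gamma_{4,\alpha}=-\frac12$. We get it from the biorthogonality of the products $F_{1,\theta}F_{2,\theta'}$: we expand $\int|F_1F_2|^2$ and use that transversal caps give a factor $|\theta|$-type gain. In the weighted-ball formulation this is standard.

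\emph{Endpoint $p=\infty$.} By the triangle inequality and Cauchy--Schwarz,
$$\|F_i\|_{L^\infty}\le (\#\Theta(\I_i))^{1/2}\Big(\sum_\theta\|F_{i,\theta}\|_\infty^2\Big)^{1/2},$$
so $\Gamma_{\infty,\alpha}=\alpha/4$. This agrees with $\alpha(\frac14-\frac1p)-\frac2p$ at $p=\infty$.

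\emph{Exponent $p=8$.} This is Theorem \ref{mmmain3}, with $\Gamma_{8,\alpha}<\frac\alpha8-\frac14$, strictly below the line.

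The interpolation itself should be done for the multilinear map $(\{F_{1,\theta}\},\{F_{2,\theta}\})\mapsto (F_1F_2)^{1/2}$, using complex or real interpolation of the mixed-norm spaces $\ell^2(L^{p/2})$ on the input side. The more robust alternative is the standard pigeonholing route from \cite{BD}. We first reduce, losing only $\lessapprox 1$, to the case where each $F_{i,\theta}$ consists of wave packets of comparable amplitude with the same number of packets per $\theta$; the same pigeonholing was used in Step 0 of Section \ref{sec:3}. In that normalized setting $\|F_{i,\theta}\|_{L^q}$ scales explicitly in $q$. Hölder's inequality then gives
$$\|G\|_{L^p}\le \|G\|_{L^{p_0}}^{1-\vartheta}\|G\|_{L^{p_1}}^{\vartheta},$$
and the right-hand sides match up to $\lessapprox$. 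This yields the log-convexity
$$\Gamma_p\le (1-\vartheta)\Gamma_{p_0}+\vartheta\Gamma_{p_1}\quad\text{whenever}\quad \frac1p=\frac{1-\vartheta}{p_0}+\frac{\vartheta}{p_1}.$$

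With log-convexity in hand, the exponents combine as follows.
\begin{itemize}
\item For $4<p<8$ we interpolate between $4$ and $8$, with $\frac1p=\frac{1-\vartheta}{4}+\frac{\vartheta}{8}$.
\item For $8<p<\infty$ we interpolate between $8$ and $\infty$, with $\frac1p=\frac{1-\vartheta}{8}$.
\end{itemize}
The comparison function $g(p)=\alpha(\frac14-\frac1p)-\frac2p$ is affine in $1/p$, and the endpoint values $-\frac12$ at $p=4$ and $\frac\alpha4$ at $p=\infty$ lie on it. The point $p=8$ is strictly below it and carries a positive weight $\vartheta$ (resp.\ $1-\vartheta$) in each range. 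Hence the interpolated exponent $\Gamma_{p,\alpha}$ is strictly less than $g(p)$ for every $4<p<\infty$, and the explicit value of $\Gamma_{p,\alpha}$ is recorded from the formula for $\Gamma_{8,\alpha}$.

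The main obstacle is justifying the interpolation rigorously, since the right-hand side is a mixed $\ell^2L^{p/2}$ norm of a bilinear, non-linear quantity. I would handle it through the wave-packet pigeonholing and dyadic amplitude reduction described above, which makes all the norms explicit. The pigeonholing costs only $(\log R)^{O(1)}$, which is absorbed into $\lessapprox_{C_{AD}}$.
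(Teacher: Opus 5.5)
Your proposal is correct and follows essentially the same route as the paper: after the balanced-function pigeonholing from \cite{BD}, H\"older interpolates the $(8,4)$ estimate of Theorem \ref{mmmain3} with the Cauchy--Schwarz $L^\infty$ bound $R^{\alpha/4}$ when $p>8$ (weight $8/p$ on $p=8$), and with the bilinear $(4,2)$ restriction estimate $R^{-1/2}$ when $4<p<8$. The strict inequality then comes, as you say, from $\Gamma_{8,\alpha}$ lying strictly below the line $\alpha(\frac14-\frac1p)-\frac2p$, which is affine in $1/p$ and is attained at the endpoints $p=4$ and $p=\infty$.
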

	\begin{proof}
		Let us first assume $p>8$.
		The proof of  \eqref{inv2} follows by
		interpolating \eqref{ ruehfr0eufui rt9go5tho0-uoj=-fghytjhipf urogut-gu}
		$$\|(\prod_{i=1}^2|F_i|)^{1/2}\|_{L^8(B_R)}\lessapprox_{C_{AD}} R^{\Gamma_{8,\alpha}}(\prod_{i=1}^2\|F_i\|_{4,R,B_R})^{\frac1{2}}
		$$
		with the consequence 
		$$\|(\prod_{i=1}^2|F_i|)^{1/2}\|_{L^\infty(B_R)}\lesssim R^{\alpha/4}(\prod_{i=1}^2\|F_i\|_{\infty,R,B_R})^{\frac1{2}}
		$$
		of the Cauchy--Schwarz inequality
		$$|F_i(x)|\le R^{\alpha/4}(\sum_{\theta\in\Theta_{i,0}}|F_{i,\theta}(x)|^2)^{1/2}.$$
		This interpolation is explained  in \cite{BD}, and it goes as follows. Via pigeonholing, there is a reduction to the case when each $F_i$ is a balanced $R$-function. This essentially means that
		$$\|F_i\|_{q,R,B_R}\sim \|F_i\|_{q_1,R,B_R}^{t}\|F_i\|_{q_2,R,B_R}^{1-t}$$ 
		whenever 
		$$\frac1q=\frac{t}{q_1}+\frac{1-t}{q_2}.$$
		Thus, writing $$\frac1p=\frac{t}{8}+\frac{1-t}{\infty},\;t=\frac{8}{p}$$
		and noting that
		$$\frac{t}{4}+\frac{1-t}{\infty}=\frac2p$$
		we find
		\begin{align*}
			\|(\prod_{i=1}^2|F_i|)^{1/2}\|_{L^p(B_R)}&\le \|(\prod_{i=1}^2|F_i|)^{1/2}\|^t_{L^8(B_R)}\|(\prod_{i=1}^2|F_i|)^{1/2}\|_{L^\infty(B_R)}^{1-t}\\&\lessapprox_{C_{AD}} R^{t\Gamma_{8,\alpha}+(1-t)\alpha/4}(\prod_{i=1}^2\|F_i\|_{4,R,B_R}^{t}\|F_i\|_{\infty,R,B_R}^{1-t})^{\frac1{2}}\\&\sim R^{\frac{8}{p}(\Gamma_{8,\alpha}-\frac\alpha4)+\frac\alpha4}(\prod_{i=1}^2\|F_i\|_{\frac{p}{2},R,B_R})^{\frac1{2}}.
		\end{align*}
		Since Theorem \ref{mmmain3} guarantees that $\Gamma_{8,\alpha}<\frac{\alpha}{8}-\frac14$, we find
		$$\Gamma_{p,\alpha}:=\frac{8}{p}(\Gamma_{8,\alpha}-\frac\alpha4)+\frac\alpha4<\alpha(\frac14-\frac1p)-\frac2p.$$
		When $4<p<8$ we similarly interpolate  \eqref{ ruehfr0eufui rt9go5tho0-uoj=-fghytjhipf urogut-gu} with 
		$$\|(\prod_{i=1}^2|F_i|)^{1/2}\|_{L^4(B_R)}\lesssim R^{-1/2}(\prod_{i=1}^2\|F_i\|_{2,R,B_R})^{\frac1{2}}.
		$$
		The latter is seen to be the classical bilinear restriction inequality. It does not admit an improved form under regularity assumptions.
		
	\end{proof}
	We also need the following square function version of \eqref{inv2}
	\begin{equation}
		\label{inv6}
		\|[\prod_{i=1}^2(\sum_{\theta\in \Theta_{i,0}}|F_{i,\theta}|^2)^{1/2}]^{1/2}\|_{L^p(B_R)}\lessapprox R^{-\frac{2}{p}}(\prod_{i=1}^2\|F_i\|_{\frac{p}{2},R,B_R})^{1/2},\;p\ge 4.
	\end{equation}
	This is not new. It was used in this exact form in \cite{BD}, to tackle the case $\alpha=1$.
	It is optimal for arbitrary collections of $\theta$, as can be seen by considering a collection of spatially overlapping wave packets, one for each direction.

	The proof of \eqref{inv6} follows by interpolating
	$$\|[\prod_{i=1}^2(\sum_{\theta\in \Theta_{i,0}}|F_{i,\theta}|^2)^{1/2}]^{1/2}\|_{L^\infty(B_R)}\le(\prod_{i=1}^2\|F_i\|_{\infty,R,B_R})^{\frac1{2}}
	$$
	with the classical square function estimate
	$$\|[\prod_{i=1}^2(\sum_{\theta\in \Theta_{i,0}}|F_{i,\theta}|^2)^{1/2}]^{1/2}\|_{L^4(B_R)}\lesssim R^{-\frac12}(\prod_{i=1}^2\|F_i\|_{2,R,B_R})^{\frac1{2}}.
	$$
	This estimate  is essentially equivalent to the bilinear Kakeya inequality.
	\bigskip

	Fix $6<p<\infty$. To simplify notation we also introduce the following quantities. First, define 
	\begin{equation}
		\label{m hfwe9 fur8-egurtgp h-67=j-7=}
		\xi=\frac{2}{p-2},\;\;\text{so that  }\;\frac2p=\frac{1-\xi}{p}+\frac{\xi}{2}
	\end{equation}
	and 
	$$\eta=\frac{p-6}{p(p-2)}.$$
	Note that
	\begin{equation}
		\label{kljufre  gu85ug90oy 0h 9u-j978}
		p>6\iff \xi<\frac12.
	\end{equation}
	For a fixed $0\le \beta\le 1$, let $A_\beta(R)$ be the smallest constant such that  
	\begin{equation}
		\label{inv999}
		\|(\prod_{i=1}^2|F_i|)^{1/2}\|_{L^p(B_R)}\le A_\beta(R)(\prod_{i=1}^2\|F_i\|
		_{p,R,B_R})^{\frac{1-\beta}2}(\prod_{i=1}^2\|F_i\|
		_{\frac{p}2,R,B_R})^{\frac\beta{2}},
	\end{equation}
	holds for arbitrary $F_i$ with spectrum inside $\cN_{1/R}(\I_i)$ and each $B_R$. 
	Note that
	$$A_0(R)=\BilDec_{p,\alpha}(R)$$
	and that $A_1(R)$ is the $(p,p/2)$ decoupling constant. In rough terms, we will upper bound $A_0(R)$ by $A_1(R)$, by making use of  the intermediate values $A_{\xi^s}(R)$.

	Note that by H\"older's inequality
	$$(\prod_{i=1}^2\|F_i\|
	_{\frac{p}{2},R,B_R})^{\frac1{2}}\lesssim (\prod_{i=1}^2\|F_i\|
	_{p,R,B_R})^{\frac1{2}}R^{\frac{2}{p}}.$$ Combining this with \eqref{inv999}  we get for each $\beta$
	\begin{equation}
		\label{7743048765898}
		\BilDec_{p,\alpha}(R)\lesssim A_{\beta}(R)R^{\frac{2\beta}{p}}.
	\end{equation}

	The following result is at the heart of the bootstrapping mechanism.
	\begin{proposition}
		\label{propinv5}
		(a) Inequality \eqref{inv999} holds true for $\beta=1$ with $$A_1(R)\lessapprox_{C_{AD}} R^{\Gamma_{p,\alpha}}.$$
		
		(b) Moreover,  for each $\beta\in (0,1]$
		$$A_{\beta\xi}(R)\lessapprox A_\beta(R^{1/2})(\Dec_{p,\alpha}(R^{1/2}))^{1-\xi\beta}R^{\beta\eta}.$$
	\end{proposition}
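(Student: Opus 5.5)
Part (a) is immediate: $A_1(R)$ is by definition the best constant in the $(p,\frac p2)$ bilinear decoupling \eqref{inv999} with $\beta=1$. Theorem \ref{invprop1} gives exactly this bound with $R^{\Gamma_{p,\alpha}}$.

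For part (b) I would follow the bootstrapping step of \cite{BD}. The aim is to trade a fraction of the $L^{p/2}$ norm at scale $R$ for an $L^{p/2}$ norm at scale $R^{1/2}$, using $L^2$ orthogonality.

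Fix $B_R$ and cover it by balls $\Delta$ of radius $R^{1/2}$. On each $\Delta$ I write $F_i=\sum_{\tau\in\Theta_{i,1}}F_{i,\tau}$. Each $F_{i,\tau}$ has spectrum in the $R^{-1/2}$ neighborhood of an AD-regular collection at scale $R^{1/4}$, so the definition of $A_\beta(R^{1/2})$ applies on $\Delta$ with the $\tau$ as caps:
$$\|(|F_1F_2|)^{1/2}\|_{L^p(\Delta)}\le A_\beta(R^{1/2})\Big(\prod_i\|F_i\|_{p,\sqrt R,\Delta}\Big)^{\frac{1-\beta}2}\Big(\prod_i\|F_i\|_{\frac p2,\sqrt R,\Delta}\Big)^{\frac\beta2}.$$
I then sum over $\Delta$ in $L^p$ and apply Hölder (Minkowski, since $p\ge2$) to separate the two factors.

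For the first factor, Proposition \ref{propo:parabooorescal} decouples each $F_{i,\tau}$ into the caps $\theta\subset\tau$. This yields $\|F_i\|_{p,\sqrt R,B_R}\lesssim \Dec_{p,\alpha}(R^{1/2})\|F_i\|_{p,R,B_R}$.

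For the $L^{p/2}$ factor at scale $\sqrt R$ I would interpolate. Using \eqref{m hfwe9 fur8-egurtgp h-67=j-7=}, $\frac2p=\frac{1-\xi}{p}+\frac\xi2$, so by Hölder on $\Delta$,
$$\|F_{i,\tau}\|_{L^{p/2}(\Delta)}\le \|F_{i,\tau}\|_{L^p(\Delta)}^{1-\xi}\|F_{i,\tau}\|_{L^2(\Delta)}^{\xi}\cdot(\text{powers of }|\Delta|).$$
On the $L^2$ piece I use local orthogonality on $\Delta$: $\|F_{i,\tau}\|_{L^2(w_\Delta)}^2\approx\sum_{\theta\subset\tau}\|F_{i,\theta}\|_{L^2(w_\Delta)}^2$. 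Then Hölder from $L^2$ up to $L^{p/2}$ on $\Delta$ gives the scale-$R$ norm $\|F_i\|_{\frac p2,R}$ with a power of $R$. Meanwhile the $L^p$ piece is again decoupled with $\Dec_{p,\alpha}(R^{1/2})$.

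Collecting exponents, the $L^p$ norms carry total weight $1-\beta+\beta(1-\xi)=1-\beta\xi$, and the scale-$R$ $L^{p/2}$ norm carries weight $\beta\xi$. This matches $A_{\beta\xi}(R)$ with decoupling loss $\Dec_{p,\alpha}(R^{1/2})^{1-\beta\xi}$. The leftover $|\Delta|$ powers should combine to $R^{\beta\eta}$ with $\eta=\frac{p-6}{p(p-2)}$; I expect this to follow from routine bookkeeping.

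The main obstacle is the summation over $\Delta$ with mixed norms. The products $\prod_i(\sum_\theta\|\cdot\|^2)$ must be recombined over the balls via Hölder in the $\ell^2$ and $\ell^{p}$ sums, and this only closes when the exponents are compatible. That compatibility is where $p>6$, i.e. $\xi<\frac12$ by \eqref{kljufre  gu85ug90oy 0h 9u-j978}, and the precise value of $\eta$ enter. I would first handle it assuming the $F_i$ are balanced, as in the proof of Theorem \ref{invprop1}, pigeonholing so that $\|F_{i,\theta}\|_{L^q(\Delta)}$ is comparable across the relevant $\Delta$ and $\theta$. With that assumption the sums over $\Delta$ factor cleanly, at a cost $\lessapprox 1$.
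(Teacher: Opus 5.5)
The overall structure of your part (b) agrees with the paper. That structure is: balls $\Delta$ of radius $R^{1/2}$, $A_\beta(R^{1/2})$ on each $\Delta$ with caps $\tau$, the interpolation $\|F_i\|_{\frac p2,R^{1/2},\Delta}\le\|F_i\|_{p,R^{1/2},\Delta}^{1-\xi}\|F_i\|_{2,R^{1/2},\Delta}^{\xi}$ (no powers of $|\Delta|$ arise), H\"older over $\Delta$, and Proposition~\ref{propo:parabooorescal} for the $L^p$ factors. Part (a) also agrees.

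The gap is in your treatment of the $L^2$ factors, and it is not bookkeeping: your route gives a strictly worse $\eta$. You pass from $L^2$ to $L^{p/2}$ on each $\Delta$, losing $|\Delta|^{\frac12-\frac2p}=R^{\frac12-\frac2p}$ per factor. You then sum over $\Delta$ via $\sum_\Delta\|F_i\|_{\frac p2,R,\Delta}^p\le\|F_i\|_{\frac p2,R,B_R}^p$ and Cauchy--Schwarz in $i$. This yields
\[
\sum_{\Delta}\prod_{i=1}^2\|F_i\|_{2,R^{1/2},\Delta}^{p/2}\lesssim R^{\frac p2-2}\Big(\prod_{i=1}^2\|F_i\|_{\frac p2,R,B_R}\Big)^{p/2}.
\]
After raising to the power $\xi\beta/p$ you get $R^{\beta\eta'}$ with $\eta'=\frac{p-4}{p(p-2)}=\eta+\frac{2}{p(p-2)}$, instead of $\eta=\frac{p-6}{p(p-2)}$. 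The loss is fatal. In the iteration of Section~\ref{sec:5} it gives $\gamma_{p,\alpha}\le\frac{p-6}{p-4}\Gamma_{p,\alpha}+\frac2p$, which is $\frac{4}{p(p-4)}$ worse than the paper's final bound. With the trivial value $\Gamma_{p,\alpha}=\alpha(\frac14-\frac1p)-\frac2p$ it would not even recover $\frac\alpha2(\frac12-\frac3p)$. No pigeonholing or balancing of the individual $F_i$ makes up this loss, because part of it is a genuinely bilinear gain.

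The missing ingredient is bilinear Kakeya, in the form of the bilinear square function estimate \eqref{inv6}. The paper proceeds in three steps:
\begin{enumerate}
\item It goes from $L^2$ to $L^p$ (not $L^{p/2}$) on $\Delta$, see \eqref{inv7}, at cost $R^{\frac12-\frac1p}$ per factor.
\item Since each $|F_{i,\theta}|$ is essentially constant on $\Delta$, the sum $\sum_\Delta\prod_i\|F_i\|_{p,R,\Delta}^{p/2}$ is bounded by $\|\prod_i(\sum_\theta|F_{i,\theta}|^2)^{1/4}\|_{L^p(B_R)}^p$, see \eqref{inv5}.
\item It applies \eqref{inv6}, obtained by interpolating the $L^4$ bilinear square function estimate with the trivial $L^\infty$ bound, which supplies a factor $R^{-2}$.
\end{enumerate}
This gives \eqref{harpottt3}, namely $\sum_\Delta b_\Delta^p\lesssim R^{\frac p2-3}(\prod_i\|F_i\|_{\frac p2,R,B_R})^{p/2}$, which is exactly $R^{\beta\eta}$ after the power $\xi\beta/p$. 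The saving comes from transversality: a wave packet tube of $F_1$ and one of $F_2$ share only $O(1)$ balls $\Delta$. An argument that estimates each $F_i$ separately on each $\Delta$ cannot detect this.

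Two smaller points:
\begin{itemize}
\item The argument for (b) never uses $p>6$. The condition $\xi<\frac12$ only enters later, when letting $s\to\infty$ in the iteration.
\item No balancing is needed, since the sum over $\Delta$ closes with one H\"older inequality with exponents $\frac{1}{\xi\beta},\frac{2}{1-\xi\beta},\frac{2}{1-\xi\beta}$.
\end{itemize}
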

	\begin{proof}
		Note that (a) follows from \eqref{inv2}.
		
		Inequality (b) is exactly the same as in \cite{BD}. We recall the proof, for the reader's convenience. Consider a finitely overlapping cover of $B_R$ with balls $\Delta$ of radius $R^{1/2}$. Note that
		$$
		\|(\prod_{i=1}^2 F_i)^{1/2}\|_{L^p(B_R)}\lesssim (\sum_{\Delta}\|(\prod_{i=1}^2|F_i|)^{1/2}\|_{L^p(\Delta)}^p)^{1/p}.
		$$
		We use H\"older's inequality on  $\Delta$
		$$\|F_i\|_{\frac{p}{2},R^{1/2},\Delta}\le \|F_i\|_{p,R^{1/2},\Delta}^{1-\xi}\|F_i\|_{2,R^{1/2},\Delta}^{\xi}$$
		to get
		\begin{equation}
			\label{f. jgp rtugrt]ogh yijo78}
			\|(\prod_{i=1}^2|F_i|)^{1/2}\|_{L^p(\Delta)}
			\lesssim A_\beta(R^{1/2})\prod_{i=1}^2(\|F_i\|_{p,R^{1/2},\Delta}^{1-\xi\beta}\|F_i\|_{2,R^{1/2},\Delta}^{\xi\beta})^{\frac1{2}}.
		\end{equation}
		Write$$a_{\Delta,i}=\|F_i\|_{p,R^{\frac12},\Delta}$$
		and
		$$b_{\Delta}=(\prod_{i=1}^2\|F_i\|_{2,R^{\frac12},\Delta}^p)^{\frac1{2p}}.$$
		We have
		\begin{equation}
			\label{inv4}
			\|(\prod_{i=1}^2|F_i|)^{1/2}\|_{L^p(B_R)}
			\lesssim A_\beta(R^{1/2})(\sum_{\Delta}b_{\Delta}^{\xi\beta p}\prod_{i=1}^2a_{\Delta,i}^{\frac{1-\xi\beta}{2}p})^{1/p}.
		\end{equation}
		We process the sum using H\"older's inequality 
		\begin{equation}
			\label{harpottt1}
			\sum_{\Delta}b_{\Delta}^{\xi\beta p}\prod_{i=1}^2a_{\Delta,i}^{\frac{1-\xi\beta}{2}p}\le (\sum_{\Delta}b_{\Delta}^p)^{\xi\beta}\prod_{i=1}^2(\sum_{\Delta}a_{\Delta,i}^p)^{\frac{1-\xi\beta}{2}}.
		\end{equation}

		To sum the factors $a_{\Delta,i}^p$ we invoke first Minkowski's inequality then Proposition \ref{propo:parabooorescal} to get
		\begin{equation}
			\label{harpottt2}
			\sum_{\Delta}\|F_i\|_{p,R^{\frac12},\Delta}^p\lesssim\|F_i\|_{p,R^{\frac12},B_R}^p\lesssim \Dec_{p,\alpha}(R^{1/2})^{p}\|F_i\|_{p,R,B_R}^p.
		\end{equation}
		We next show how to sum the factors $b_{\Delta}^p$. 
		Note  that by $L^2$ orthogonality followed by H\"older's inequality, for each $\Delta$
		\begin{equation}
			\label{inv7}
			\|F_i\|_{2,R^{\frac12},\Delta}\lesssim \|F_i\|_{2,R,\Delta}\lesssim R^{\frac12-\frac1p}\|F_i\|_{p,R,\Delta}.
		\end{equation}
		
		Due to the uncertainty principle, each 
		$|F_{i,\theta}|$ is essentially constant on each $\Delta$. Thus, in particular we can write 
		$$\sum_{\Delta\subset B_R}\prod_{i=1}^{2}\|F_i\|_{p,R,\Delta}^{\frac{p}2}\lesssim \sum_{\Delta\subset B_R}\|\prod_{i=1}^2(\sum_{\theta\in\Theta_{i,0}}|F_{i,\theta}|^2)^{\frac1{4}}\|_{L^p({\Delta})}^p$$
		\begin{equation}
			\label{inv5}
			\lesssim\|\prod_{i=1}^2(\sum_{\theta\in\Theta_{i,0}}|F_{i,\theta}|^2)^{\frac1{4}}\|_{L^p({B_R})}^p.
		\end{equation}
		Now \eqref{inv7}, \eqref{inv5} and \eqref{inv6} lead to
		\begin{equation}
			\label{harpottt3}
			\sum_{\Delta}b_\Delta^p\lesssim R^{-2}R^{\frac{p}{2}-1}(\prod_{i=1}^2\|F_i\|_{\frac{p}{2},R,B_R})^{1/2}.
		\end{equation}
		To end the argument simply invoke the estimates \eqref{inv4}, \eqref{harpottt1}, \eqref{harpottt2} and \eqref{harpottt3}.
		
	\end{proof}
	\bigskip
	Write
	$$A_{\beta}(R)= R^{\psi(\beta)},$$
	and
	$$\Dec_{p,\alpha}(R)=R^{\gamma_{p,\alpha}}.$$
	\begin{proposition}
		\label{f mrtgirt hi09hi9yj0-9j-}
		Assume $p>6$. Then
		\begin{equation}
			\label{ssxkj sdyvorefrto] go6]h}
			\gamma_{p,\alpha}\le \frac{\psi(1)(1-2\xi)+2\eta}{1-\xi}.
		\end{equation}
	\end{proposition}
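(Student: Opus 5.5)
The plan is to iterate Proposition \ref{propinv5}(b) $s$ times, starting from $\beta=\xi^s$ and climbing back up to $\beta=1$, where part (a) applies. We then feed the result into \eqref{7743048765898} and the bilinear-to-linear reduction, Proposition \ref{klen rhufy re8gi[rt9g[yth9670-]]}. This produces a self-improving inequality for $\gamma_{p,\alpha}$, and we let $s\to\infty$.

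To make exponents meaningful, interpret $\gamma=\gamma_{p,\alpha}$ as the infimum of exponents $\gamma'$ with $\Dec_{p,\alpha}(R)\lesssim_{\gamma'}R^{\gamma'}$ for all $R$. Similarly interpret $\psi(1)$ via $A_1(R)\lessapprox R^{\psi(1)}$ from part (a). Fix $\gamma'>\gamma$ and a large integer $s$.

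\emph{Unrolling the recursion.} For $1\le k\le s$, apply Proposition \ref{propinv5}(b) with $\beta=\xi^{k-1}$ at scale $r=R^{2^{-(s-k)}}$:
$$A_{\xi^k}(r)\lessapprox A_{\xi^{k-1}}(r^{1/2})\,\Dec_{p,\alpha}(r^{1/2})^{1-\xi^k}\,r^{\xi^{k-1}\eta}.$$
Chaining these from $k=s$ down to $k=1$, and bounding $\Dec_{p,\alpha}(r^{1/2})\lesssim r^{\gamma'/2}$, we get
$$\psi(\xi^s)\le 2^{-s}\psi(1)+\sum_{k=1}^{s}2^{-(s-k)}\Big[\tfrac{1-\xi^k}{2}\gamma'+\xi^{k-1}\eta\Big]+o(1).$$
The $o(1)$ collects the $R^\epsilon$ losses; there are only $s$ of them, since $s$ is fixed before $R\to\infty$. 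Combining with \eqref{7743048765898} (taking $\beta=\xi^s$) and Proposition \ref{klen rhufy re8gi[rt9g[yth9670-]]} gives
$$\gamma\le \frac{2\xi^s}{p}+2^{-s}\psi(1)+\gamma'\Big(1-2^{-s}-2^{-s-1}\sum_{k=1}^s(2\xi)^k\Big)+\eta\,2^{-s+1}\sum_{k=1}^s(2\xi)^{k-1}.$$

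\emph{Extracting the bound.} Let $\gamma'\downarrow\gamma$, subtract $\gamma(1-2^{-s})$ from both sides, and multiply by $2^s$. This yields
$$\gamma\Big(1+\tfrac12\sum_{k=1}^s(2\xi)^k\Big)\le \psi(1)+2\eta\sum_{k=0}^{s-1}(2\xi)^k+\tfrac{2}{p}(2\xi)^s.$$
Here \eqref{kljufre  gu85ug90oy 0h 9u-j978} is decisive: because $p>6$ we have $2\xi<1$, so the geometric series converge and $(2\xi)^s\to0$. Letting $s\to\infty$ gives
$$\gamma\,\frac{1-\xi}{1-2\xi}\le \psi(1)+\frac{2\eta}{1-2\xi},$$
which is exactly \eqref{ssxkj sdyvorefrto] go6]h}.

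\emph{Main obstacle.} The delicate point is the order of limits and the bookkeeping of losses. The $\lessapprox$ constants in each application of Proposition \ref{propinv5}(b) (and the $C_{AD}$-dependence) must be uniform. Also, $s$ must be sent to infinity only after $R\to\infty$, so that the accumulated $R^{O(s\epsilon)}$ losses stay harmless.

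A second subtlety is using the a priori bound $\Dec_{p,\alpha}(\rho)\lesssim\rho^{\gamma'}$ at all the smaller scales $\rho=R^{2^{-k}}$. This is legitimate precisely because $\gamma$ is defined as an infimum exponent valid for all scales. The argument is otherwise the same linear-recursion computation as in \cite{BD}.
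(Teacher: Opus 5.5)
Your proposal is correct and follows the paper's proof. You unroll the recursion $\psi(\xi^{s+1})\le\frac12\psi(\xi^s)+\frac{\gamma_{p,\alpha}}{2}(1-\xi^{s+1})+\eta\xi^s$ coming from Proposition \ref{propinv5}, combine it with \eqref{7743048765898} at $\beta=\xi^s$ and the bilinear-to-linear reduction, and let $s\to\infty$ using $2\xi<1$; your inequality $\gamma\bigl(1+\frac12\sum_{k=1}^s(2\xi)^k\bigr)\le\cdots$ is exactly the paper's $\gamma_{p,\alpha}\frac{1-\xi-\xi(2\xi)^s}{1-2\xi}\le\cdots$, and your handling of $\gamma'\downarrow\gamma$ and of the order of limits just makes explicit what the paper leaves implicit.
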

	\begin{proof}
		Proposition \ref{propinv5} implies that for each $s\ge 0$
		
		\begin{equation}
			\label{inv13}
			\psi(\xi^{s+1})\le \frac12\psi(\xi^s)+\frac{\gamma_{p,\alpha}}2(1-\xi^{s+1})+\eta\xi^s.
		\end{equation}
		Iterating \eqref{inv13}  gives
		\begin{equation}
			\label{inv15}
			\psi(\xi^s)\le \frac1{2^s}\psi(1)+\gamma_{p,\alpha}(1-2^{-s})+2(\frac\eta\xi-\frac{\gamma_{p,\alpha}}2)\frac{2^{-s}-\xi^{s}}{\xi^{-1}-2}.
		\end{equation}
		When combined with \eqref{7743048765898} for $\beta=\xi^s$ \eqref{inv15} implies
		$$
		\BilDec_{p,\alpha}(R)\lesssim R^{\frac{2\xi^s}{p}}R^{\frac1{2^s}\psi(1)+\gamma_{p,\alpha}(1-2^{-s})+2(\frac\eta\xi-\frac{\gamma_{p,\alpha}}2)\frac{2^{-s}-\xi^{s}}{\xi^{-1}-2}}.
		$$
		Using Proposition \ref{klen rhufy re8gi[rt9g[yth9670-]]}, this further leads to
		$$\gamma_{p,\alpha}\le \frac{2\xi^s}{p}+\frac1{2^s}\psi(1)+\gamma_{p,\alpha}(1-2^{-s})+2(\frac\eta\xi-\frac{\gamma_{p,\alpha}}2)\frac{2^{-s}-\xi^{s}}{\xi^{-1}-2},$$
		or 
		$$\gamma_{p,\alpha}\frac{1-\xi-\xi(2\xi)^s}{1-2\xi}\le \frac{2(2\xi)^s}{p}+\psi(1)+\frac{2\eta}{1-2\xi}(1-(2\xi)^s).$$
		Recalling \eqref{kljufre  gu85ug90oy 0h 9u-j978}, we let $s\to\infty$ to finish the proof.
		
	\end{proof}
	We now conclude the proof of Theorem \ref{mmmain1}. 
	Note that it is equivalent with 
	$$\gamma_{p,\alpha}<\frac\alpha2(\frac12-\frac3p).$$
	We may take $$\psi(1)=\Gamma_{p,\alpha}<\alpha(\frac14-\frac1p)-\frac2p=\frac{\alpha(p-4)-8}{4p},$$ due to Theorem \ref{mmmain2}.
	Then, using \eqref{ssxkj sdyvorefrto] go6]h} and recalling that $1-2\xi=\frac{p-6}{p-2}$, $1-\xi=\frac{p-4}{p-2}$ and $\eta=\frac{p-6}{p(p-2)}$ we find
	\begin{align}
		\gamma_{p,\alpha}&\le \frac{\psi(1)(1-2\xi)+2\eta}{1-\xi}\nonumber\\&\label{jh fh rufhu0 rey98g9}=\frac{p-6}{p-4}(\frac2p+\Gamma_{p,\alpha})\\&<\frac\alpha2(\frac12-\frac3p)\nonumber.
	\end{align}

	\begin{remark}\label{weh -fureigrtihyj u=jpi8=[ko []}
		The bootstrapping mechanism we used relies critically on the fact that $2\xi<1$. The number 2 comes from  the two-scale paradigm $R\mapsto R^{1/2}$. The value of $\xi$ in \eqref{m hfwe9 fur8-egurtgp h-67=j-7=} is enforced by the use of the Lebesgue exponents $p/2$ and 2 in connection with the exponent $p$. The latter is the only exponent where orthogonality holds in the absence of curvature (unless a priori $L^p$ orthogonality is enforced, as in \cite{chang2022decoupling}; see also Remark \ref{finrem}). The role of $p/2$ was explained in Remark
		\ref{chyurt8g0-5696 j9- boy0h u-0j670j= 5u0-}. It seems possible that the use of a different exponent in place of $p/2$, possibly also depending on $\alpha$, would lead to a more efficient bootstrapping argument.
	\end{remark}
	\begin{remark}
		\label{ckl jhuicrhii rt[pgi]0-h96}
		If \eqref{ ruehfr0eufui rt9go5tho0-uoj=-fghytjh} holds with $\Gamma_{p,\alpha}=-\frac2p$ for some $p\le \frac{4}\alpha$, then  $\gamma_{p,\alpha}=0$, due to \eqref{jh fh rufhu0 rey98g9}.
	\end{remark}
	\begin{remark}
		\label{finrem}
		The computations in this section can be done for arithmetic Cantor sets, with $L^2$ orthogonality replaced by $L^{p/3}$ quasi-orthogonality. A variant of Proposition \ref{propinv5} will continue to hold, subject to the following changes. Part (a) remains the same. Part (b) remains the same, with the new values
		$$\xi=\frac12,\;\text{ so that }\;\frac{2}{p}=\frac{1-\xi}{p}+\frac\xi{p/3}$$
		and
		$$\eta=\frac{\alpha\kappa_{p/3}}{8},$$
		where $\kappa_{p/3}$ is the constant in \eqref{hhuihreufy8gu8tug89uh8-578} (corresponding to $p/3$, rather than $p$).
		The new choice of $\xi$ is informed by our use of the triple $(p,p/2,p/3)$, in place of $(p,p/2,2)$. The value of $\eta$ is the one that emerges from running the argument in Proposition \ref{propinv5}, with $L^{p/3}$ used in place of $L^2$. For example, \eqref{f. jgp rtugrt]ogh yijo78} now reads
		$$
		\|(\prod_{i=1}^2|F_i|)^{1/2}\|_{L^p(\Delta)}
		\lesssim A_\beta(R^{1/2})\prod_{i=1}^2(\|F_i\|_{p,R^{1/2},\Delta}^{1-\xi\beta}\|F_i\|_{p/3,R^{1/2},\Delta}^{\xi\beta})^{\frac1{2}},
		$$    
		while \eqref{inv7} becomes
		$$
		\|F_i\|_{p/3,R^{\frac12},\Delta}\lesssim R^{\frac{\alpha\kappa_{p/3}}{4}}\|F_i\|_{p/3,R,\Delta}\lesssim R^{\frac{\alpha\kappa_{p/3}}{4}}R^{\frac3p-\frac1p}\|F_i\|_{p,R,\Delta}.
		$$
		This easily leads to 
		$$\eta=\frac{\alpha\kappa_{p/3}}{8}.$$
		Next, iteration as in Proposition \ref{f mrtgirt hi09hi9yj0-9j-} now gives for each $s\ge 0$ 
		$$\gamma_{p,\alpha}(s+2)\le \frac{4}{p}+2\psi(1)+4s\eta=\frac{4}{p}+2\Gamma_{p,\alpha}+4s\eta.$$
		Letting $s\to\infty$ leads to the result proved in \cite{chang2022decoupling}
		$$\gamma_{p,\alpha}\le \frac{\alpha\kappa_{p/3}}{2}.$$
		On the other hand, if \eqref{ ruehfr0eufui rt9go5tho0-uoj=-fghytjh} is established with a $\Gamma_{p,\alpha}$ close enough to $-\frac2p$ so that 
		\begin{equation}
			\label{njhufhrufgurtgu8tug90uh095}
			\frac2p+\Gamma_{p,\alpha}<\frac{\alpha\kappa_{p/3}}{2},
		\end{equation}
		then $s=0$ gives the better bound
		$$\gamma_{p,\alpha}\le \frac2p+\Gamma_{p,\alpha}.$$
		Note however that this upper bound is worse than \eqref{jh fh rufhu0 rey98g9}. In that regard, the use of $(p,p/2,p/3)$ is less efficient than the use of $(p,p/2,2)$. 
	\end{remark}

	\bibliographystyle{alpha}
	\bibliography{references}
	
\end{document}